\newtheorem{dummy}{}[section]
\newtheorem{thm}[dummy]{Theorem}
\newtheorem{prop}[dummy]{Proposition}
\newtheorem{pr}[dummy]{Proposition}
\newtheorem{lem}[dummy]{Lemma}
\newtheorem{lemma}[dummy]{Lemma}
\newtheorem{conj}{Conjecture}
\theoremstyle{definition}
\newtheorem{definition}[dummy]{Definition}
\newtheorem{nn}[dummy]{Notation}
\theoremstyle{remark}
\newtheorem{rmk}[dummy]{Remark}
\newtheorem{ex}[dummy]{Example}
\newtheorem{obs}[dummy]{Observation}
\newcommand{\stronglypositive}{{strongly positive }}
\newcommand{\CM}{{\mathcal{M}}}
\newcommand{\oCM}{{\overline{\mathcal{M}}}}
\newcommand{\oCMr}{{{\overline{\mathcal{M}}}^{\frac{1}{r}}_{0,k,\vec{a}}}}
\newcommand{\CMr}{{{{\mathcal{M}}}^{\frac{1}{r}}_{0,k,\vec{a}}}}
\newcommand{\Gammar}{{\Gamma_{0,k,\vec{a}}}}
\newcommand{\CC}{{\mathcal{C}}}
\newcommand{\CL}{{\mathbb{L}}}
\newcommand{\Pos}{{H^+}}
\newcommand{\alt}{{\text{alt}}}
\newcommand{\TAU}{{\wp}}
\newcommand{\detach}{{\text{detach} }}
\newcommand{\LLL}{{\mathfrak{L}}}
\newcommand{\CS}{{\mathcal{S}}}
\newcommand{\CB}{{\mathcal{B}}}
\newcommand{\s}{\mathbf{s}}
\newcommand{\srest}{\mathbf{s}_1}
\newcommand{\tsrest}{{\mathbf{s}}_1}
\newcommand{\Aut}{\text{Aut}}
\newcommand{\ttt}{t}
\newcommand{\tildet}{\ttt}
\newcommand{\ttts}{\mathbf{t}}
\newcommand{\cS}{\mathcal{S}}
\newcommand{\cW}{\mathcal{W}}
\newcommand{\tw}{\text{tw}}
\newcommand{\NNN}{{n}}
\newcommand{\N}{\mathbb{N}}
\newcommand{\TTT}{{\mathcal{T}}}
\newcommand{\TRAM}{{\overline{\mathcal{R}}}}
\newcommand{\Ass}{{{\maltese}}}
\newcommand{\Detach}{{{\text{Detach}}}}
\newcommand{\Conn}{{{\text{Conn}}}}
\newcommand{\oPM}{{\overline{\mathcal{PM}}}^{1/r}}
\newcommand{\PM}{{{\mathcal{PM}}}^{1/r}}
\newcommand{\oPMr}{{{\overline{\mathcal{PM}}}^{\frac{1}{r}}_{0,k,\vec{a}}}}
\newcommand{\Z}{\ensuremath{\mathbb{Z}}}
\newcommand{\C}{\ensuremath{\mathbb{C}}}
\newcommand{\R}{\ensuremath{\mathbb{R}}}
\newcommand{\M}{\ensuremath{\overline{\mathcal{M}}}}
\renewcommand{\O}{\ensuremath{\mathcal{O}}}
\newcommand{\ev}{\ensuremath{\textrm{ev}}}
\renewcommand{\d}{\ensuremath{\partial}}
\renewcommand{\subset}{\ensuremath{\subseteq}}
\newcommand{\eps}{\varepsilon}
\newcommand{\<}{\left<}
\renewcommand{\>}{\right>}
\newcommand{\mbZ}{\mathbb{Z}}
\newcommand{\mbC}{\mathbb{C}}
\DeclareMathOperator{\res}{res}
\newcommand{\Coef}{\mathrm{Coef}}
\newcommand{\tF}{\widetilde{F}}
\DeclareMathOperator{\rk}{rank}
\DeclareMathOperator{\tdeg}{deg}
\numberwithin{equation}{section}
\begin{document}

\title[Open $r$-spin theory II]{Open $r$-spin theory II: The analogue of Witten's conjecture for $r$-spin disks}

\author{Alexandr Buryak}
\address[A. Buryak]{Faculty of Mathematics, National Research University Higher School of Economics, 6 Usacheva str., Moscow, 119048, Russian Federation; \smallskip\newline Center for Advanced Studies, Skolkovo Institute of Science and Technology, 1 Nobel str., Moscow, 143026, Russian Federation}
\email{aburyak@hse.ru}

\author{Emily Clader}
\address[E.~Clader]{San Francisco State University, San Francisco, CA 94132-1722, USA}
\email{eclader@sfsu.edu}

\author{Ran J. Tessler}
\address[R.~J.~Tessler]{Incumbent of the Lilian and George Lyttle Career Development Chair, Department of Mathematics, Weizmann Institute of Science, POB 26, Rehovot 7610001, Israel}
\email{ran.tessler@weizmann.ac.il}

\begin{abstract}
We conclude the construction of $r$-spin theory in genus zero for Riemann surfaces with boundary.  In particular, we define open $r$-spin intersection numbers, and we prove that their generating function is closely related to the wave function of the $r$th Gelfand--Dickey integrable hierarchy.  This provides an analogue of Witten's $r$-spin conjecture in the open setting and a first step toward the construction of an open version of Fan--Jarvis--Ruan--Witten theory.  As an unexpected consequence, we establish a mysterious relationship between open $r$-spin theory and an extension of Witten's closed theory.
\end{abstract}

\maketitle

\setcounter{tocdepth}{1}

\section{Introduction}
In the study of the intersection theory of the moduli space of stable curves, one of the most important modern results is Witten's conjecture \cite{Witten2DGravity}, which was proven by Kontsevich~\cite{Kontsevich}. Let $\psi_1, \ldots, \psi_n \in H^2(\M_{g,n})$ be the first Chern classes of the cotangent line bundles at the $n$ marked points, and let
\[
F^c(t_0,t_1,\ldots,\eps) := \sum_{\substack{g \geq 0, n\geq 1\\2g-2+n>0}} \sum_{d_1,\ldots,d_n\geq 0} \frac{\eps^{2g-2}}{n!}\left(\int_{\M_{g,n}} \psi_1^{d_1} \cdots \psi_n^{d_n} \right)t_{d_1} \cdots t_{d_n}
\]
be the generating function of their intersection numbers. Here,~$\{t_i\}_{i\ge 0}$ and~$\eps$ are formal variables and the superscript ``$c$," which stands for ``closed," is to contrast with the open theory discussed below.  Witten's conjecture states that $\exp(F^c)$ is a tau-function of the Korteweg--de Vries (KdV) hierarchy, or equivalently, that $\exp(F^c)$ satisfies a certain collection of linear differential equations known as the Virasoro equations. This uniquely determines all $\psi$-integrals on $\M_{g,n}$.

Witten also proposed a generalization of his conjecture \cite{Witten93}, in which the moduli space of curves is enhanced to the moduli space of $r$-spin structures.  On a smooth marked curve $(C;z_1, \ldots, z_n)$, an $r$-spin structure is a line bundle~$S$ together with an isomorphism
\[S^{\otimes r} \cong \omega_{C}\left(-\sum_{i=1}^n a_i[z_i]\right),\]
where $a_i \in \{0,1,\ldots, r-1\}$ and $\omega_C$ denotes the canonical bundle.  There is a natural compactification $\M_{g,\{a_1, \ldots, a_n\}}^{1/r}$ of the moduli space of $r$-spin structures on smooth curves, and this space admits a virtual fundamental class $c_W$ known as Witten's class. In genus zero, Witten's class is defined by
\begin{gather}\label{eq:Witten's class}
c_W:= e((R^1\pi_*\mathcal{S})^{\vee}),
\end{gather}
where $\pi: \mathcal{C} \rightarrow \M_{0,\{a_1, \ldots, a_n\}}^{1/r}$ is the universal curve, $\mathcal{S}$ is the universal $r$-spin structure, and $e(\cdot)$ denotes the top Chern class.  In higher genus, on the other hand, $R^1\pi_*\mathcal{S}$ may not be a vector bundle, and the definition of Witten's class is much more intricate; see \cite{PV,ChiodoWitten,Moc06,FJR,CLL} for various constructions.

The {\it closed $r$-spin intersection numbers} are defined by
\begin{gather}\label{eq:closed r-spin}
\<\tau^{a_1}_{d_1}\cdots\tau^{a_n}_{d_n}\>^{\frac{1}{r},c}_g:=r^{1-g}\int_{\M^{1/r}_{g,\{a_1, \ldots, a_n\}}} \hspace{-1cm} c_W \cap \psi_1^{d_1} \cdots \psi_n^{d_n}.
\end{gather}
Witten's $r$-spin conjecture then states that, if $t^a_d$ are formal variables indexed by $0\le a\le r-1$ and $d\ge 0$, and if
\[
F^{\frac{1}{r},c}(t^*_*,\eps):=\sum_{\substack{g \geq 0, n \geq 1\\2g-2+n>0}} \sum_{\substack{0 \leq a_1, \ldots, a_n \leq r-1\\ d_1, \ldots, d_n \geq 0}} \frac{\eps^{2g-2}}{n!}\<\tau^{a_1}_{d_1}\cdots\tau^{a_n}_{d_n}\>^{\frac{1}{r},c}_g t^{a_1}_{d_1} \cdots t^{a_n}_{d_n}
\]
is the generating function of the $r$-spin intersection numbers, then $\exp(F^{\frac{1}{r},c})$ becomes, after a simple change of variables, a tau-function of the $r$th Gelfand--Dickey hierarchy.  This result was proven by Faber--Shadrin--Zvonkine \cite{FSZ10}.

A new direction in the intersection theory of the moduli spaces of curves was initiated by Pandharipande, Solomon, and the third author in \cite{PST14}, studying the moduli space of Riemann surfaces with boundary.  In that work, a moduli space $\M_{0,k,l}$ was considered, which parameterizes tuples consisting of a stable disk $\Sigma$, boundary marked points $x_i \in \d\Sigma$, and internal marked points $z_j \in \Sigma \setminus \d\Sigma$.  Furthermore, intersection numbers on $\M_{0,k,l}$, which can be viewed as integrals of $\psi$-classes at the internal marked points,  were constructed and hence a generating function $F^o_0(t_0,t_1,\ldots,s)$ was defined as a direct generalization of the genus-zero part of $F^c$; the new formal variable $s$ tracks the number of boundary marked points. This construction has been extended in \cite{ST1} to all genera, and the complete open potential $F^o(t_0,t_1,\ldots,s,\eps)$ has been constructed. Verifying a conjecture of~\cite{PST14}, the first and third authors proved that $F^o$ satisfies certain ``open Virasoro equations"~\cite{BT17}, and moreover, the first author proved that these equations imply that $\exp(F^o)$ is explicitly related to the wave function of the KdV hierarchy~\cite{Bur16}.  These results provide an open analogue of Witten's conjecture on $\M_{g,n}$. An introduction to these results in more physical language can be found in \cite{DijkWit}.

A natural question, then, is what the open analogue of Witten's $r$-spin conjecture should be.  In order to make sense of the conjecture, one first must define an appropriate open $r$-spin moduli space~$\M_{g,k,\{a_1, \ldots, a_l\}}^{1/r}$ and an open analogue of Witten's bundle $(R^1\pi_*\mathcal{S})^{\vee}$.  We carried out this construction in genus zero in \cite{BCT1}, leading to a definition of the moduli space of {\it graded $r$-spin disks} $\oCMr$ as well as the open Witten bundle $\mathcal{W}$ and cotangent line bundles $\mathbb{L}_1, \ldots, \mathbb{L}_{l}$ at the internal marked points.

In this work, we build on the foundations of \cite{BCT1} to define {\it open $r$-spin intersection numbers},\footnote{The construction from~\cite{PST14} is recovered as a special case, when $r=2$ and all $a_i$ are zero.} which is a subtle task because the moduli space has boundary and hence the integration of top Chern classes is not well-defined until one finds a canonical way to prescribe the boundary behavior of sections.  The resulting intersection numbers are defined by




\begin{equation}
\label{eq:intnums}
\<\tau_{d_1}^{a_1}\cdots\tau^{a_l}_{d_l}\sigma^k\>^{\frac{1}{r},o}_0:=\int_{\oPM_{0,k,\{a_1, \ldots, a_l\}}} e\left( \mathcal{W} \oplus \bigoplus_{i=1}^l \mathbb{L}_i^{\oplus d_i}, s_{\text{canonical}}\right),
\end{equation}
where, on a manifold with boundary $M$, the notation $e(E,s)$ denotes the Euler class of $E$ relative to the section $s$ of $E|_{\d M}$.  The section $s_{\text{canonical}}$ is any choice of nowhere-vanishing boundary section of $\mathcal{W} \oplus \bigoplus_{i=1}^l\mathbb{L}_i^{\oplus d_i}$ that satisfies a ``canonicity" condition defined Section~\ref{sec:bc}; Theorem~\ref{thm:int_numbers_well_defined} shows that such $s_{\text{canonical}}$ indeed exists and that any two choices give rise to the same intersection numbers.  Finally, $\oPMr$ is a subspace of $\oCMr$ obtained by removing certain boundary strata; the restriction to this subspace makes defining canonicity more convenient without affecting the intersection numbers that result.

Equipped with these numbers, we define an open $r$-spin potential by
\[
F^{\frac{1}{r},o}_0(t^*_*,s):=\sum_{\substack{k,l\geq 0\\k+2l>2}} \sum_{\substack{0 \leq a_1, \ldots, a_l \leq r-1\\ d_1, \ldots, d_l \geq 0}} \frac{1}{k!l!}\<\tau^{a_1}_{d_1}\cdots\tau^{a_l}_{d_l}\sigma^k\>^{\frac{1}{r},o}_0 t^{a_1}_{d_1} \cdots t^{a_l}_{d_l}s^k.
\]
Before stating the main theorem regarding $F^{\frac{1}{r},o}$, we must recall some definitions from the theory of the Gelfand--Dickey hierarchy.

\subsection{Main results}


Consider formal variables $T_i$ for $i\ge 1$. A {\it pseudo-differential operator}~$A$ is a Laurent series
$$
A=\sum_{n=-\infty}^m a_n\d_x^n,\quad a_n\in\mbC[\eps,\eps^{-1}][[T_1,T_2,\ldots]],
$$
where $m$ is an integer and $\d_x$ is a formal variable. The space of such operators is endowed with the structure of a non-commutative associative algebra.  Moreover, for any $r\ge 2$ and any pseudo-differential operator~$A$ of the form $A=\d_x^r+\sum_{n=1}^\infty a_n\d_x^{r-n}$, there exists a unique pseudo-differential operator $A^{\frac{1}{r}}$ of the form $A^{\frac{1}{r}}=\d_x+\sum_{n=0}^\infty \widetilde{a}_n\d_x^{-n}$ such that $\left(A^{\frac{1}{r}}\right)^r=A$.

Let $r\ge 2$, and consider the operator
$$
L:=\d_x^r+\sum_{i=0}^{r-2}f_i\d_x^i,\quad f_i\in\mbC[\eps,\eps^{-1}][[T_*]].
$$
It is not hard to check that for any $n\ge 1$, the commutator $[(L^{n/r})_+,L]$ has the form $\sum_{i=0}^{r-2}h_i\d_x^i$ with $h_i\in\mbC[\eps,\eps^{-1}][[T_*]]$, where $(\cdot)_+$ denotes the part of a pseudo-differential operator with non-negative powers of $\d_x$. The {\it $r$th Gelfand--Dickey hierarchy} is the following system of partial differential equations for the functions $f_0,f_1,\ldots,f_{r-2}$:
\begin{gather}\label{eq:Gelfand-Dickey hierarchy}
\frac{\d L}{\d T_n}=\eps^{n-1}[(L^{n/r})_+,L],\quad n\ge 1.
\end{gather}

Consider the solution $L$ of this hierarchy specified by the initial condition
\begin{gather}\label{eq:initial condition for L}
L|_{T_{\ge 2}=0}=\d_x^r+\eps^{-r}rx.
\end{gather}
The $r$-spin Witten conjecture states that $\frac{\d F^{\frac{1}{r},c}}{\d t^{r-1}_d}=0$ for $d\ge 0$, and under the change of variables
\begin{gather}\label{eq:closed r-spin change of variables}
T_k=\frac{1}{(-r)^{\frac{3k}{2(r+1)}-\frac{1}{2}-d}k!_r}t^a_d,\quad 0\le a\le r-2,\quad d\ge 0,
\end{gather}
where $k=a+1+rd$ and $k!_r:=\prod_{i=0}^d(a+1+ri)$, we have
$$
\res L^{n/r}=\eps^{1-n}\frac{\d^2 F^{\frac{1}{r},c}}{\d T_1\d T_n}
$$
whenever $n\ge 1$ and $r\nmid n$, where $\res L^{n/r}$ denotes the coefficient of $\d_x^{-1}$ in~$L^{n/r}$.

With $L$ as above, let~$\Phi(T_*,\eps)$ be the solution of the system of equations
\begin{gather}\label{eq:equations for phi}
\frac{\d\Phi}{\d T_n}=\eps^{n-1}(L^{n/r})_+\Phi,\quad n\ge 1,
\end{gather}
that satisfies the initial condition $\left.\Phi\right|_{T_{\ge 2}=0}=1$. Denote $\phi:=\log\Phi$ and consider the expansion $\phi=\sum_{g\in\mbZ}\eps^{g-1}\phi_g$, $\phi_g\in\mbC[[T_*]]$. Comparing to the function~$F^{\frac{1}{r},c}_0$, which depends only on the variables $t^0_d,\ldots,t^{r-2}_d$, the function~$F^{\frac{1}{r},o}_0$ depends also on $t^{r-1}_d$ and $s$. So we relate the variables $T_{mr}$ and $t^{r-1}_{m-1}$ as follows:
\begin{gather}\label{eq:r-1 change}
T_{mr}=\frac{1}{(-r)^{\frac{m(r-2)}{2(r+1)}}m!r^m}t^{r-1}_{m-1},\quad m\ge 1.
\end{gather}

The main result of the current paper is the following:

\begin{thm}\label{thm:main}
We have
\begin{gather}\label{eq:main result}
F^{\frac{1}{r},o}_0=\frac{1}{\sqrt{-r}}\phi_0\big|_{t^{r-1}_d\mapsto \frac{1}{\sqrt{-r}}(t^{r-1}_d-r\delta_{d,0}s)}-\frac{1}{\sqrt{-r}}\phi_0\big|_{t^{r-1}_d\mapsto\frac{1}{\sqrt{-r}}t^{r-1}_d}.
\end{gather}
\end{thm}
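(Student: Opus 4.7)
The plan is to prove \eqref{eq:main result} by showing that both sides are uniquely determined by a common system of differential equations with matching initial data. I would carry this out in three stages: a geometric derivation of open recursion relations for $F^{\frac{1}{r},o}_0$ from the definition \eqref{eq:intnums}, an algebraic analysis of the right-hand side via the Gelfand--Dickey wave function, and a comparison based on uniqueness.

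On the geometric side, I would derive from \eqref{eq:intnums} three families of relations for $F^{\frac{1}{r},o}_0$: an open string equation describing insertion of $\tau^0_0$, an open dilaton equation describing insertion of $\tau^0_1$, and open topological recursion relations (TRR) describing insertion of $\tau^a_{d+1}$ at an internal marked point. Each would be obtained by pulling back the standard $\psi$-class relations from $\M_{0,4}$ (or the equality $\psi_i=\pi^*\psi_i+(\text{exceptional})$ for the appropriate forgetful map) to $\oPMr$, and then analyzing the codimension-one boundary. The crucial input is the gluing description of the boundary strata of $\oPMr$ from \cite{BCT1} and the compatible behavior of the canonical boundary section $s_{\text{canonical}}$ under that gluing. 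Combined with the closed theory of \cite{FSZ10}, these recursions should uniquely determine $F^{\frac{1}{r},o}_0$ from a small amount of initial data in the variable $s$.

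On the algebraic side, I would unpack the right-hand side of \eqref{eq:main result}; denote it $G_0(t^*_*,s)$. The system \eqref{eq:equations for phi} identifies $\phi$ as the logarithm of the wave function of the $r$th Gelfand--Dickey hierarchy, so $\phi_0$ is determined explicitly from the operator $L$, which by the $r$-spin Witten conjecture of \cite{FSZ10} is built from derivatives of $F^{\frac{1}{r},c}$. I would verify, by direct manipulation of \eqref{eq:equations for phi} and the Gelfand--Dickey commutation relations (for $r=2$ these reduce to the wave-function calculations of \cite{Bur16}), that $G_0$ satisfies the same open string, open dilaton and open TRR equations as the geometric side. Matching of initial conditions follows since $G_0|_{s=0}=0$ by cancellation of the two copies of $\phi_0$, and the analogous geometric statement $F^{\frac{1}{r},o}_0|_{s=0}=0$ reduces to the vanishing of open correlators with no boundary marked points.

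The hardest step is expected to be the geometric derivation of the open TRR and the associated boundary analysis on $\oPMr$. The relative Euler class \eqref{eq:intnums} is sensitive to extensions of $s_{\text{canonical}}$ across disk degenerations, so one must control every codimension-one boundary contribution (boundary nodes on the disk, internal sphere bubbles, and the specialized degenerations forced by the grading on the $r$-spin structure) and verify that the surviving terms correspond exactly to the shift $t^{r-1}_d\mapsto t^{r-1}_d-r\delta_{d,0}s$ on the algebraic side. The appearance of the distinguished variable $t^{r-1}_d$ in \eqref{eq:main result} reflects the vanishing $\partial F^{\frac{1}{r},c}/\partial t^{r-1}_d=0$ on the closed side, so the only new data contributed by the open potential lies in the $t^{r-1}_*$ direction; making this dichotomy precise at the level of boundary strata, and correctly tracking the signs and the $1/\sqrt{-r}$ normalization, is where the bulk of the technical work will lie.
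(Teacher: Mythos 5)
The high-level plan — derive geometric recursions, show the algebraic side satisfies the same recursions, and conclude by uniqueness — does match the paper, but the proposal misses the conceptual ingredient that makes this tractable: the closed \emph{extended} $r$-spin theory. When you analyze the codimension-one boundary of $\oPMr$ after capping off $\psi$-classes, the internal Ramond nodes produce closed genus-zero $r$-spin correlators carrying a single $\tau^{-1}_0$ insertion. These are not ordinary closed $r$-spin numbers in the sense of \cite{FSZ10}; they are the correlators $\langle\tau^{-1}_0\tau^{a_1}_{d_1}\cdots\rangle^{\frac{1}{r},\text{ext}}_0$ introduced in the companion paper \cite{BCT_Closed_Extended}. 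Both the boundary-marked-point TRR and the internal-marked-point TRR (Theorem~\ref{thm:TRR}) have right-hand sides involving a sum over $a=-1,\ldots,r-2$, with the $a=-1$ term being a closed extended correlator. Your proposal repeatedly refers only to ``the closed theory of \cite{FSZ10},'' so the recursions you would derive would not close up correctly, and the algebraic verification would lack the anchor point that makes the comparison possible.

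Relatedly, the paper's algebraic side is not a direct Gelfand--Dickey verification that $\phi_0$ satisfies the open TRRs. Instead, the companion paper identifies $F^{\frac{1}{r},\text{ext}}_0=\sqrt{-r}\,\phi_0(t^{\le r-2}_*,\tfrac{1}{\sqrt{-r}}t^{r-1}_*)$ and proves geometric TRRs for $F^{\frac{1}{r},\text{ext}}_0$; the present paper then observes that Theorems~\ref{thm:main} and~\ref{thm:open-closed} are equivalent, and proves the latter by checking that the shifted $\tF$ satisfies properties~\eqref{eq:property1}--\eqref{eq:property5}. Your plan of directly manipulating the system~\eqref{eq:equations for phi} is not impossible (something like it is carried out in the paper for the genus-one conjecture), but it is a genuinely different and heavier route at genus zero and still secretly needs the closed-extended intermediate object. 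Finally, your initial data claim is too weak: vanishing at $s=0$ (which both sides do enjoy, via Proposition~\ref{prop:no_zeroes_for_Witten_without_bdry_markings} and the cancellation) does not pin down the normalization; one also needs a nonzero seed such as $\langle\tau^1\sigma^2\rangle^{\frac{1}{r},o}_0=1$, which in the paper is computed directly from the moduli space (Example~\ref{ex:t1s^2}). The open string and dilaton equations are not part of the uniqueness argument in the paper; they are consequences of the main theorem, and invoking them as inputs does not supply the missing seed.
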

\noindent This provides the open $r$-spin version of Witten's conjecture in genus zero.


The idea of the proof of Theorem~\ref{thm:main} is to verify that the genus-zero open $r$-spin intersection numbers satisfy certain geometric recursions (known as topological recursion relations) that allow one to determine all of the open theory using just a handful of numbers that can be explicitly calculated.  We then verify that the right-hand side of equation~\eqref{eq:main result} satisfies the same recursions with the same initial conditions.

In addition, our calculations yield a simple explicit formula for all genus-zero primary open intersection numbers (which, in particular, shows that their structure is much simpler than that of their closed analogues):

\begin{thm}\label{theorem:primary numbers}
Suppose $k,l\ge 0$ and $0\le a_1,\ldots a_l\le r-1$. Then
\begin{equation}
   \label{eq:primary open numbers} 
\<\prod_{i=1}^l\tau^{a_i}_0\sigma^k\>^{\frac{1}{r},o}_0= \displaystyle\frac{(k+l-2)!}{(-r)^{l-1}}
\end{equation}
if $k \geq 1$ and $\frac{1}{r}\big((r-2)(k-1)+2\sum a_i\big)=2l+k-3$, and the intersection number is zero if these conditions are not satisfied.
\end{thm}


A mirror-theoretic interpretation of formula \eqref{eq:primary open numbers} appears in \cite{GKT}.

Regarding higher genus, we conjecture that, for any genus $g\ge 1$, there is a geometric construction of open $r$-spin intersection numbers $\<\tau^{\alpha_1}_{d_1}\cdots\tau^{\alpha_l}_{d_l}\sigma^k\>^{\frac{1}{r},o}_g$ generalizing our construction in genus zero.  Given such intersection numbers, define a generating series~$F_g^{\frac{1}{r},o}(t^*_*,s)$ by
$$
F_g^{\frac{1}{r},o}(t^*_*,s):=\sum_{l,k\ge 0}\frac{1}{l!k!}\sum_{\substack{0\le\alpha_1,\ldots,\alpha_l\le r-1\\d_1,\ldots,d_l\ge 0}}\<\tau^{\alpha_1}_{d_1}\cdots\tau^{\alpha_l}_{d_l}\sigma^k\>^{\frac{1}{r},o}_g t^{\alpha_1}_{d_1}\cdots t^{\alpha_l}_{d_l}s^k.
$$
\begin{conj}\label{main conjecture}
For any $g\ge 1$, we have
$$
F^{\frac{1}{r},o}_g=\left.(-r)^{\frac{g-1}{2}}\phi_g\right|_{t^{r-1}_d\mapsto\frac{1}{\sqrt{-r}}(t^{r-1}_d-\delta_{d,0}rs)}.
$$
\end{conj}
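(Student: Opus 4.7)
The plan is to mirror the strategy used in genus zero, which reduces the problem to verifying that both sides of the conjectured identity satisfy the same topological recursion relations and agree on a sufficient set of initial data. A fundamental preliminary step, however, is the geometric construction itself: before any proof is possible, one must extend the framework of Section \ref{sec:mod_and_bundle} to $r$-spin Riemann surfaces with boundary of arbitrary genus, define an appropriate analogue of the Witten bundle (or rather of Witten's class, since $R^1\pi_*\mathcal{S}$ is no longer a vector bundle in positive genus), resolve the attendant orientation issues, and define canonical boundary conditions whose consistency relies on an analogue of the key Proposition \ref{prop:pointwise_positivity}. One would need an open-theory analogue of one of the closed constructions (Polishchuk--Vaintrob, Chiodo, Mochizuki, or Fan--Jarvis--Ruan--Witten), crucially compatible with boundary strata and with the decomposition behavior of the Witten class along internal nodes.

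Assuming such intersection numbers \eqref{open numbers, all genera} have been constructed and behave well at nodal strata, the first analytic step is to establish higher-genus topological recursion relations generalizing Theorem \ref{thm:TRR}, together with open string and dilaton equations generalizing Propositions \ref{prop:open string} and \ref{prop:open dilaton}. In genus $g\ge 1$ one expects additional contributions from boundary strata in which the $\psi$-carrying point sits on a lower-genus bubble; these should couple $F_g^{\frac{1}{r},o}$ to closed extended intersection numbers of lower genus and to open potentials of lower genus. The second step is to show that these relations, together with the string and dilaton equations and a small number of computable initial correlators, uniquely determine $F_g^{\frac{1}{r},o}$.

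On the algebraic side, one would verify that the right-hand side of the conjectured identity obeys the same system of equations. The expansion $\phi=\sum_g \eps^{g-1}\phi_g$ satisfies a genus-by-genus hierarchy of PDEs obtained from \eqref{eq:equations for phi}, and the substitution $t^{r-1}_d\mapsto \frac{1}{\sqrt{-r}}(t^{r-1}_d-\delta_{d,0}rs)$ is designed so that $s$-differentiation reproduces the Witten-bundle insertions at boundary nodes. The dilaton-type equation $O\phi=\frac{1}{2}$ derived in the proof of Proposition \ref{prop:open dilaton} would then extend to each $\phi_g$, and the recursive constraints coming from the higher-order operators $(L^{n/r})_+$ should match the geometric recursions term by term; here the formulas of \cite{BY15} expressing $\Phi$ in terms of the KP wave function would provide a convenient bookkeeping device.

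The main obstacle is without question the geometric one: constructing open $r$-spin intersection numbers in positive genus that simultaneously support a canonical Witten class, a canonical relative orientation, and canonical boundary conditions of the type developed in Section \ref{sec:bc}. Orientability of the moduli space is itself nontrivial in higher genus, as is the passage from Euler-class counting of Witten bundle zeroes to integration against a virtual class with boundary. As supporting evidence while the geometric construction is developed, one should verify the conjecture in the case $r=2$, where it reduces to the main theorem of \cite{Bur16}; check that the predicted values are consistent with the genus-zero results of Theorems \ref{thm:main} and \ref{theorem:primary numbers} through gluing at boundary nodes; and confirm that the right-hand side automatically satisfies the higher-genus string and dilaton equations predicted on the geometric side, which can be done directly from the hierarchy \eqref{eq:equations for phi} without any reference to moduli spaces.
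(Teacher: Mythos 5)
You were asked to prove a statement that the paper itself explicitly labels as a \textbf{conjecture} (Conjecture~\ref{main conjecture}) and does \emph{not} prove. Section~\ref{sec:high_genus} only offers evidence: a dimension-constraint check, a verification that the right-hand side satisfies the expected open string and dilaton equations, a match with the $r=2$ Neveu--Schwarz case via~\cite{Bur16,BT17,ST1}, and a verification that $\phi_1$ satisfies the expected genus-one TRR~\eqref{eq:open genus 1 TRR}. None of these constitutes a proof, and the authors say so.

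Your proposal is therefore not really a proof either, and you are correct and candid about this. It is, however, a faithful account of what a proof would require, and it matches the paper's own expectations closely: you identify the central obstruction --- the lack of a higher-genus open $r$-spin moduli space with a Witten class, canonical relative orientation, and boundary conditions analogous to Section~\ref{sec:bc} --- and you lay out the reduction scheme (higher-genus TRRs plus string/dilaton plus initial data, verified against the right-hand side via the Gelfand--Dickey hierarchy) that the authors also describe as the expected route. Your remark that the $r=2$ case reduces to~\cite{Bur16} and that the right-hand side satisfies string/dilaton automatically from~\eqref{eq:equations for phi} is exactly what the paper records as evidence. Where your proposal goes slightly beyond the paper is in gesturing toward which closed constructions (Polishchuk--Vaintrob, Chiodo, FJRW) one might try to adapt, and in noting that the wave-function description of~\cite{BY15} could serve as bookkeeping; the paper does not commit to either. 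What the proposal cannot supply, and what nobody has yet supplied, is the geometric construction of a compactly supported open virtual class with the correct decomposition and positivity behavior at boundary strata, together with a higher-genus analogue of Proposition~\ref{prop:pointwise_positivity}. Until that exists, the conjecture remains open and your argument is a (reasonable, accurate) research program rather than a proof.
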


In the sequel to this paper~\cite{BCT3}, we provide geometric and algebraic evidence for the correctness of this conjecture.

\begin{rmk}
In open Gromov--Witten theory, unlike the closed theory, not much is known or even conjectured about higher-genus invariants. The conjecture presented here is one of the few conjectures that describes the full, all-genus open Gromov--Witten theory in particular cases, and also one of the few that relates the potential to an integrable hierarchy (see also \cite{PST14,ST1,BT17,BPTZ}).
\end{rmk}

\subsection{Open-closed correspondence}

An interesting and unexpected consequence of this work is that it reveals a connection between open $r$-spin theory and what we refer to as {\it closed extended $r$-spin theory}.  More specifically, it is possible to enhance the closed theory by allowing exactly one of the twists $a_i$ to be equal to $-1$; in this case, $R^1\pi_*\mathcal{S}$ is still a bundle in genus zero, so the closed theory can be defined exactly as usual.  We define by
\begin{gather*}
F^{\frac{1}{r},\text{ext}}_0(t^*_*):=\sum_{n\ge 2}\frac{1}{n!}\sum_{\substack{0\le a_1,\ldots,a_n\le r-1\\d_1,\ldots,d_n\ge 0}}\<\tau^{-1}_0\tau^{a_1}_{d_1}\cdots\tau^{a_n}_{d_n}\>^{\frac{1}{r},\text{ext}}_0 t^{a_1}_{d_1}\cdots t^{a_n}_{d_n}
\end{gather*}
the generating series for closed extended $r$-spin intersection numbers.

In the companion paper \cite{BCT_Closed_Extended} to this work, we prove topological recursion relations for the closed extended theory, and from these one finds that the generating function of genus-zero intersection numbers in that setting is {\it also} closely related to the wave function of the $r$th Gelfand--Dickey hierarchy. Using this, we deduce the following theorem:

\begin{thm}\label{thm:open-closed}
The generating series $F^{\frac{1}{r},o}_0$ and $F^{\frac{1}{r},\text{ext}}_0$ are related by
\begin{gather*}
F^{\frac{1}{r},o}_0=-\frac{1}{r}\left.F^{\frac{1}{r},\text{ext}}_0\right|_{t^{r-1}_d\mapsto t^{r-1}_d-r\delta_{d,0}s}+\frac{1}{r}F^{\frac{1}{r},\text{ext}}_0.
\end{gather*}
\end{thm}

\subsection{Plan of the paper}

The structure of the paper is as follows.  Section~\ref{sec:mod_and_bundle} reviews from \cite{BCT1} the definition and properties of the moduli space of open graded $r$-spin disks and the Witten bundle~$\mathcal{W}$.  In Section \ref{sec:bc}, we define the canonical boundary conditions and give the definition of the open $r$-spin intersection numbers.  We prove topological recursion relations satisfied by these intersection numbers in Section \ref{sec:recursions}. Using this result, in Section~\ref{section:open numbers and GD hierarchy} we prove Theorems~\ref{thm:main},~\ref{theorem:primary numbers}, and~\ref{thm:open-closed}, and we derive open string and dilaton equations for the open $r$-spin intersection numbers. In the last section, we detail the construction of boundary conditions for the definition of the intersection numbers.

\subsection{Acknowledgements}

The authors thank M.~Gross, J.~Gu\'er\'e, T.~Kelly, R.~Pandharipande, D.~Ross, J.~Solomon, E.~Witten, A.~Netser Zernik and Yizhen Zhao for discussions and suggestions related to this work.  
The authors also thank the anonymous referee for important comments which improved this paper. 
A.~B. was supported by the grant RFBR-20-01-00579. E.~C. was supported by NSF DMS grant 1810969.  R.T. (incumbent of the Lillian and George Lyttle Career Development Chair) was supported by a research grant from the Center for New Scientists of Weizmann Institute, by Dr. Max R\"ossler, the Walter Haefner Foundation, and the ETH Z\"urich Foundation, by the ISF (grant No. 335/19) and partially by ERC-2012-AdG-320368-MCSK.

\section{Review of graded $r$-spin disks}
\label{sec:mod_and_bundle}

We begin by reviewing the definition of graded $r$-spin disks, their moduli space, and the relevant bundles.  For details, we direct the reader to \cite{BCT1}.

\subsection{The moduli space of graded $r$-spin disks}

The underlying objects parameterized by the moduli space are genus-zero marked Riemann surfaces with boundary, which we view as arising from closed genus-zero curves with an involution.  Specifically, a {\it nodal marked disk} is defined as a tuple
\[(C, \phi, \Sigma, \{z_i\}_{i \in I}, \{x_j\}_{j \in B}, m^I, m^B),\]
in which
\begin{itemize}
\item $C$ is a nodal, possibly disconnected, orbifold Riemann surface with isotropy only at special points and with each component having genus zero;
\item $\phi: C \rightarrow C$ is an anti-holomorphic involution that realizes the coarse underlying Riemann surface $|C|$ topologically as the union of two Riemann surfaces, $\Sigma$ and $\overline{\Sigma}=\phi(\Sigma),$ glued along the common subset $\text{Fix}(|\phi|)$;
\item $z_i \in C$ are a collection of distinct {\it internal marked points} whose images in $|C|$ lie in $\Sigma\setminus\text{Fix}(|\phi|)$, with {\it conjugate marked points} $\overline{z_i}:= \phi(z_i)$;
\item $x_j \in \text{Fix}(\phi)$ are a collection of distinct {\it boundary marked points} whose images in $|C|$ lie in $\d \Sigma$;
\item $m^I$ and $m^B$ are markings of $I$ and $B$, respectively (see the definition below), for which no internal marked point on a connected component $C' \subseteq C$ with $C' \cap \phi(C') \neq \emptyset$ is marked $0$.
\end{itemize}
We say that a nodal marked disk is {\it stable} if each irreducible component has at least three special points.  In the above, a {\it marking} of a set $A$ is a function
\[m: A \rightarrow \{0\} \cup 2^{\mathbb{N}}_{*}\]
such that, for all $a \neq a' \in m^{-1}(0)$, the intersection $m(a) \cap m(a')$ is empty, where $2^{\mathbb{N}}_*$ denotes the set of all nonempty subsets of $\mathbb{N}$.  
Such functions are used in what
follows to label the marked points on a curve; the possibility of marking some points by 0 or with a set is desired to handle marked points that arise via normalization of a nodal curve, which do not carry a natural label but can be canonically labeled by this more general type of marking.

Nodal marked disks can have three types of nodes, illustrated in Figure~\ref{fig:nodes} by shading $\Sigma \subseteq |C|$ in each case.  Note that $\d\Sigma\subset\text{Fix}(|\phi|)$, and $\text{Fix}(|\phi|)\setminus\d\Sigma$ is exactly the union of the contracted boundaries.

\begin{figure}[h]
\centering
\begin{subfigure}{.3\textwidth}
  \centering

\begin{tikzpicture}[scale=0.3]
  \shade[ball color = gray, opacity = 0.5] (0,0) circle (2cm);
  \draw (0,0) circle (2cm);
  \draw (-2,0) arc (180:360:2 and 0.6);
  \draw[dashed] (2,0) arc (0:180:2 and 0.6);

  \shade[ball color = gray, opacity = 0.1] (0,-4) circle (2cm);
  \draw (0,-4) circle (2cm);
  \draw (-2,-4) arc (180:360:2 and 0.6);
  \draw[dashed] (2,-4) arc (0:180:2 and 0.6);
   \shade[ball color = gray, opacity = 0.6] (-2,-4) arc (180:360:2 and 0.6) arc (0:180:2);

    \shade[ball color = gray, opacity = 0.1] (0,-8) circle (2cm);
  \draw (0,-8) circle (2cm);
  \draw (-2,-8) arc (180:360:2 and 0.6);
  \draw[dashed] (2,-8) arc (0:180:2 and 0.6);
\end{tikzpicture}

  \caption{Internal node}
\end{subfigure}
\begin{subfigure}{.3\textwidth}
  \centering
\vspace{0.9cm}
\begin{tikzpicture}[scale=0.4]
  \shade[ball color = gray, opacity = 0.1] (0,0) circle (2cm);
  \draw (0,0) circle (2cm);
  \draw (-2,0) arc (180:360:2 and 0.6);
  \draw[dashed] (2,0) arc (0:180:2 and 0.6);
  \shade[ball color = gray, opacity = 0.6] (-2,0) arc (180:360:2 and 0.6) arc (0:180:2);

  \shade[ball color = gray, opacity = 0.1] (4,0) circle (2cm);
  \draw (4,0) circle (2cm);
  \draw (2,0) arc (180:360:2 and 0.6);
  \draw[dashed] (6,0) arc (0:180:2 and 0.6);
  \shade[ball color = gray, opacity = 0.6] (2,0) arc (180:360:2 and 0.6) arc (0:180:2);
\end{tikzpicture}
\vspace{0.9cm}

  \caption{Boundary node}
\end{subfigure}
\begin{subfigure}{.3\textwidth}
  \centering

\begin{tikzpicture}[scale=0.4]
\vspace{0.15cm}
  \shade[ball color = gray, opacity = 0.6] (0,0) circle (2cm);
  \draw (0,0) circle (2cm);
  \draw (-2,0) arc (180:360:2 and 0.6);
  \draw[dashed] (2,0) arc (0:180:2 and 0.6);

  \shade[ball color = gray, opacity = 0.1] (0,-4) circle (2cm);
  \draw (0,-4) circle (2cm);
  \draw (-2,-4) arc (180:360:2 and 0.6);
  \draw[dashed] (2,-4) arc (0:180:2 and 0.6);
\end{tikzpicture}
\vspace{0.15cm}

  \caption{Contracted boundary}
\end{subfigure}
\caption{The three types of nodes in a nodal marked disk.}
\label{fig:nodes}
\end{figure}

The boundary $\d\Sigma$ of $\Sigma$ is equipped with a natural orientation, dictated by the choice of the preferred half $\Sigma \subseteq |C|$.  This, in turn, induces a notion of positivity for $\phi$-invariant sections of $\omega_{|C|}$ over $\d \Sigma$: a section $s$ is said to be {\it positive} if, for any point $p$ and any tangent vector $v \in T_p(\d \Sigma)$ in the direction of orientation, we have $\langle s(p), v \rangle > 0$, where $\langle -, - \rangle$ denotes the natural pairing between cotangent and tangent vectors.

A {\it twisted $r$-spin structure} on a nodal marked disk is a complex line bundle~$S$ on~$C$ whose coarse underlying bundle $|S|$ satisfies
\[|S|^{\otimes r} \cong \omega_{|C|} \otimes \O\left(-\sum_{i \in I} a_i [z_i] - \sum_{i \in I} a_i [\overline{z_i}] - \sum_{j \in B} b_j[x_j]\right),\]
where $a_i\in \{-1,0,1,\ldots, r-1\}$ and $b_j \in \{0,1,\ldots, r-1\}$, together with an involution $\widetilde{\phi}: S \rightarrow S$ lifting $\phi$.  We insist that connected components of $C$ that meet $\text{Fix}(\phi)$ contain no $z_i$ with $a_i = -1$ and that connected components of $C$ that do not meet $\text{Fix}(\phi)$ contain at most one such $z_i$.  The numbers $a_i$ and $b_j$ are referred to as the internal and boundary {\it twists}, respectively.
\begin{obs}
\label{obs:open_rank1}
There exists a twisted $r$-spin structure with twists $a_i, b_j$ on a connected nodal marked disk $C$ if and only if
\begin{equation}\label{eq:open_rank1}
\frac{2\sum a_i + \sum b_j-(r-2)}{r}\in \Z.
\end{equation}
The analogue in the closed case is more well-known \cite{Witten93}: a twisted $r$-spin structure with twists $b_j$ exists on a (closed) connected nodal marked genus-zero curve if and only if 
\begin{equation}\label{eq:close_rank1}
\frac{\sum b_j -(r-2)}{r}\in \Z.
\end{equation}
\end{obs}

To extend the definition of twists to nodes, let $n: \widehat{C} \rightarrow C$ be the normalization morphism.  Then $n^*S$ might not be a twisted $r$-spin structure, because its connected components might contain too many marked points of twist $-1$.  However, there is a canonical way to choose a minimal subset $\mathcal{R}$ of the half-nodes of $C$ making
\begin{equation}
\label{eq:hatS}
\widehat{S}:= n^* S \otimes \O\left(-\sum_{q \in \mathcal{R}} r [q]\right)
\end{equation}
a twisted $r$-spin structure; see \cite[Section 2.3]{BCT1} for the details.  In particular, then, for each irreducible component $C_l$ of $\widehat{C}$ with internal marked points $\{z_i\}_{i \in I_l}$, boundary marked points $\{x_j\}_{j \in B_l}$ and half-nodes $\{p_h\}_{h \in N_l}$, we have
\[\bigg(|\widehat{S}|\big|_{|C_l|}\bigg)^{\otimes r} \cong \omega_{|C_l|} \otimes \O\left(-\sum_{i \in I_l} a_i[z_i] - \sum_{i \in I_l} a_i [\overline{z_i}] - \sum_{j \in B_l} b_j[x_j] - \sum_{h \in N_l} c_j [p_h]\right)\]
for numbers $a_i, c_h \in \{-1,0,1, \ldots, r-1\}$, $b_j \in \{0,1,\ldots, r-1\}$ satisfying the same conditions as above.  The numbers $c_h$ are called the {\it twists} of $S$ at its half-nodes.  If $p$ and $p'$ are the two branches of a node in $C$, then
\begin{equation}\label{eq:twist_sum_at_nodes}c_p + c_{p'} \equiv r-2 \mod r.\end{equation}
The node is said to be {\it Ramond} if one (hence both) of its branches satisfy $c_p \equiv -1 \mod r$, and it is said to be {\it Neveu--Schwarz} otherwise. The set $\mathcal{R}$ in equation~\eqref{eq:hatS} is chosen in a way that guarantees that each internal Ramond node has precisely one half-edge in $\mathcal{R}.$

\begin{rmk}
In orbifold language, Neveu--Schwarz nodes are nodes at which the isotropy group of $C$ acts nontrivially on the fiber of $S$.  Because a section of an orbifold line bundle is necessarily invariant under the action of the isotropy group, nontriviality of the action forces sections of $S$ to vanish at such nodes. This causes a splitting in the normalization exact sequence associated to $S$, which is the key reason why (as discussed in Proposition \ref{pr:decomposition} below) the Witten bundle decomposes in a far more straightforward way along Neveu--Schwarz than along Ramond nodes.
\end{rmk}

Associated to each twisted $r$-spin structure $S$, we define $J:=S^{\vee} \otimes \omega_{C}$. This bundle admits an involution, as well, induced by the involutions on $C$ and $S$; abusing notation slightly, we denote the involution on $J$ also by $\widetilde{\phi}$.

Assume that $C$ has no contracted boundary node, and let $A$ be the complement of the special points in $\d \Sigma$.  We say that a twisted $r$-spin structure on such $C$ is {\it compatible} if there exists a $\widetilde{\phi}$-invariant section
$v \in \Gamma\left(A, |S|^{\widetilde{\phi}}\right)$ (called a {\it lifting} of $S$) such that the image of $v^{\otimes r}$ under the map on sections induced by the inclusion $|S|^{\otimes r} \rightarrow \omega_{|C|}$ is positive.  Such a $v$ always admits a companion lifting
$w \in \Gamma\left(A, |J|^{\widetilde{\phi}}\right)$ of $J$ for which $\langle w, v \rangle \in \Gamma(A, \omega_{|C|})$ is positive, where $\langle -, - \rangle$ denotes the natural pairing between $|S|^{\vee}$ and $|S|$.  This $w$ is uniquely determined by $v$ up to multiplication by a continuous $\mathbb{R}^+$-valued function.

Given a lifting of a twisted $r$-spin structure, we say that a boundary marked point or boundary half-node $x_j$ is {\it legal}, or that the lifting {\it alternates} at $x_j$, if $w$ cannot be continuously extended to $x_j$ without vanishing; note that this definition depends only on $v$, not on the specific companion lifting $w$ chosen.  We say that $w$ is a {\it grading} if every boundary marked point is legal and furthermore, for every Neveu--Schwarz boundary node, one of the two half-nodes is legal and the other is illegal.

If $C$ has a contracted boundary node, then the definition of compatibility and grading must be adapted.  In this case, we say that a twisted $r$-spin structure on $C$ is {\it compatible} if the contracted boundary node $q$ is Ramond and there exists a $\widetilde{\phi}$-invariant element $v \in |S|\big|_q$ such that the image of $v^{\otimes r}$ under the map $|S|^{\otimes r}\big|_q \rightarrow \omega_{|C|}\big|_q$ is positive imaginary under the canonical identification of $\omega_{|C|}\big|_q$ with $\C$ given by the residue. See \cite[Definition 2.8]{BCT1} for more details.  As in the case without a contracted boundary node, such a $v$ admits a $\widetilde{\phi}$-invariant $w \in |J|\big|_q$ such that $\langle v, w \rangle$ is positive imaginary.  We refer to this $w$ as a {\it grading}.

We say that two gradings are {\it equivalent} if they differ by multiplication by a continuous positive function $A \rightarrow \mathbb{R}^+$ (in the case without a contracted boundary node) or multiplication by a positive real number (in the case with a contracted boundary node).  We have shown in \cite[Theorem 5.2]{BCT1} that the choice of grading determines a canonical relative orientation for the Witten bundle, which is one key ingredient in defining open $r$-spin intersection numbers; furthermore, in what follows, we will see that the grading is also crucial in defining canonical boundary conditions.


The relation between the twists and legality, and the obstructions to having a grading, are summarized in the following proposition (which summarizes \cite[Proposition 2.5 and Observation 2.13]{BCT1} and the behavior of a Neveu-Schwarz node in a graded structure):
\begin{prop}
\label{prop:compatibility_lifting_parity}
\begin{enumerate}
\item\label{it:compatibility_odd} When $r$ is odd, any twisted $r$-spin structure is compatible, and there is a unique equivalence class of liftings.
\item\label{it:lifting and parity_odd} Suppose $r$ is odd and $v$ is a lifting over a punctured neighborhood of a boundary marked point $x_j$.  Then $x_j$ is legal if and only if its twist is odd.
\item\label{it:compatibility_even} When $r$ is even, the boundary twists $b_j$ in a compatible twisted $r$-spin structure must be even.  In this case, there is a unique equivalence class of compatible structures.
\item\label{it:lifting and parity_even}
Suppose $r$ is even.  There exists a lifting over $\d \Sigma \setminus \{x_j\}_{j \in B}$ that alternates precisely at a subset $D \subset \{x_j\}_{j \in B}$ if and only if
\begin{equation}
\label{parity}
\frac{2\sum a_i + \sum b_j+2}{r} \equiv |D| \mod 2.
\end{equation}
\item\label{it:Ramond_nodes}
Ramond boundary nodes can appear in a graded structure only when $r$ is odd, and in this case, their half-nodes are illegal with twists $r-1.$
\item\label{it:NS nodes}
In a graded $r$-spin structure, any Neveu-Schwarz boundary node has one legal half-node and one illegal half-node.
\end{enumerate}
\end{prop}

In the case $\text{Fix}(\phi)=\emptyset,$ our notion of grading is vacuous, but we replace it with an additional datum (see \cite[Definition 2.8]{BCT1} for an equivalent definition):

\begin{definition}\label{def:graded_sphere}
A {\it smooth graded $r$-spin sphere} is a nodal marked disk for which $\text{Fix}(\phi) = \emptyset$ and $\Sigma$ is a smooth sphere, together with
\begin{enumerate}
\item a twisted $r$-spin structure $S$,
\item a choice of one distinguished internal marked point $z_i$, called the {\it anchor} and satisfying $m^I(z_i) = 0$, which is the marked point of twist $-1$ if one exists.
\end{enumerate}
If the twist $a_i$ of the anchor is $r-1$, there is a map
\[\tau': \left(|S|\otimes \O\left([z_i]\right)\right)^{\otimes r}\big|_{z_i}\rightarrow \omega_{|C|}([z_i])\big|_{z_i}\cong\C,\]
where the second identification is the residue map.  In this case, we also fix
\begin{enumerate}
\setcounter{enumi}{2}
\item an involution $\widetilde{\phi}$ on the fiber $\left(|S|\otimes \O\left([z_i]\right)\right)_{z_i}$ such that
\[
\tau'(\widetilde{\phi}(v)^{\otimes r})=-\overline{\tau'(v^{\otimes r})}\quad \text{for all $v\in\left(|S|\otimes \O\left([z_i]\right)\right)_{z_i}$},
\]
where $w\mapsto\overline{w}$ is the standard conjugation, and
\[\left\{\tau'(v^{\otimes r})\; | \; v\in \left(|S|\otimes \O\left([z_i]\right)\right)^{\widetilde\phi}_{z_i}\right\}\supseteq i\R_+,\]where $i$ is the root of $-1$ in the upper half-plane;
\item a connected component $V$ of $\left(|S|\otimes \O\left([z_i]\right)\right)^{\widetilde\phi}_{z_i}\setminus\{0\}$, called the {\it positive direction}, such that $\tau'(v^{\otimes r})\in i\R_+$ for any $v \in V$.
\end{enumerate}
\end{definition}

We can now define the primary objects of interest in this paper:

\begin{definition}
\label{def:stable_graded_rspin_disk}
A {\it stable graded $r$-spin disk} is a stable nodal marked disk, together with
\begin{enumerate}
\item a compatible twisted $r$-spin structure $S$ in which all boundary marked points have twist $r-2$ and all contracted boundary nodes are Ramond;
\item an equivalence class of gradings;
\item a choice of one distinguished special point (called the {\it anchor} and marked zero) in each connected component $C'$ of $C$ that is either disjoint from the set $\text{Fix}(\phi)$ or meets the set $\text{Fix}(\phi)$ in a single contracted boundary node.  We require that the anchor is the contracted boundary node, if one exists, or the unique marked point in $C'$ of twist $-1$, if one exists; otherwise, the anchor can be any marked point.  Finally, we require that the collection of anchors is $\phi$-invariant;
\item an involution $\widetilde{\phi}$ on the fiber $(|S| \otimes \O([z_i]))_{z_i}$ over any anchor $z_i$ of twist $r-1$, and an orientation of the $\widetilde{\phi}$-fixed subspace of $(|S| \otimes \O([z_i]))_{z_i}$ for any such $z_i$, as in Definition~\ref{def:graded_sphere}.
\end{enumerate}
\end{definition}

In \cite[Theorem 3.4]{BCT1}, we prove the existence of a moduli space $\M_{0,k,l}^{1/r}$ of such objects (with $k$ boundary and $l$ internal marked points), and we show that it is a compact smooth orientable orbifold with corners of real dimension $k+2l-3$.  More generally, $\M_{0,B,I}^{1/r}$ denotes the moduli space of stable graded $r$-spin disks with boundary points marked by $B$ and internal points marked by $I$, and for any vector $\vec{a} = \{a_i\}_{i \in I}$, we denote by $\M_{0,k,\vec{a}}^{1/r}$ the suborbifold consisting of those for which the $i$th internal marked point has twist $a_i$.



\subsection{Stable graded $r$-spin graphs}

Analogously to the more familiar setting of the moduli space of curves, $\M_{0,B,I}^{1/r}$ can be stratified according to decorated dual graphs.  The relevant dual graphs $\Gamma$ consist of
\begin{enumerate}[(i)]
\item a vertex set $V$, decomposed into {\it open} and {\it closed} vertices $V = V^O \sqcup V^C$;
\item a half-edge set $H$, decomposed into {\it boundary} and {\it internal} half-edges $H = H^B \sqcup H^I$.
\end{enumerate}
The half-edge set is equipped with an involution $\sigma_1$, which we view as reversing the two halves of an edge.  The fixed points of $\sigma_1$ are the {\it tails} $T$ of $\Gamma$, and we decompose $T$ into $T^B := H^B \cap T$ and $T^I := H^I \cap T$.  The two-element orbits of $\sigma_1$ are the {\it edges} $E$ of $\Gamma$, which can also be decomposed as $E = E^B \sqcup E^I$.  Furthermore, the data of $\Gamma$ includes a subset $H^{CB} \subseteq T^I$ of {\it contracted boundary tails}, as well as a marking $m^B$ on $T^B$ and a marking $m^I$ on $T^I \setminus H^{\text{CB}}$.  We say~$\Gamma$ is {\it closed} if $V^O = \emptyset$, and we say $\Gamma$ is {\it smooth} if $E = H^{CB} = \emptyset$.

In particular, each element of $\M_{0,k,l}^{1/r}$ has a corresponding dual graph $\Gamma$, and this $\Gamma$ is equipped with three additional decorations: a map
\[\text{tw}: H \rightarrow \{-1,0,1,\ldots, r-1\}\]
encoding the twist of $S$ at each marked point and half-node, a map
\[\text{alt}: H^B \rightarrow \mathbb{Z}/2\mathbb{Z}\]
given by $\text{alt}(h) = 0$ if the half-node given by $h$ is illegal and $\text{alt}(h) = 1$ otherwise, and a subset
$T^* \subseteq T^I$
given by the anchors.


A {\it genus-zero graded $r$-spin dual graph} is a graph for which each connected component is the dual graph of an element of some moduli space $\M_{0,k,l}^{1/r}$.  In particular, given a connected graph $\Gamma$ as above, there is a closed suborbifold with corners $\M_{\Gamma}^{1/r} \subseteq \M_{0,B,I}^{1/r}$ whose general point is a graded $r$-spin disk with dual graph $\Gamma$.  We write $\CM_{\Gamma}^{1/r}$ for the open substack of $\M_{\Gamma}^{1/r}$ consisting of curves whose dual graph is exactly $\Gamma$.  If $\Gamma$ is disconnected, we define $\M_{\Gamma}^{1/r}$ as the product of the moduli spaces associated to its connected components.

There are various forgetful maps between these moduli spaces: we denote~by
\[\text{For}_{\text{spin}}: \M^{1/r}_{0,k,l} \rightarrow \M_{0,k,l}\]
the map that forgets the spin structure, and for $B', I' \subseteq \mathbb{Z}$, we denote by
\[\text{For}_{B',I'}: \M_{\Gamma}^{1/r} \rightarrow \M_{\Gamma'}^{1/r}\]
the map that forgets all twist-zero internal marked points marked by $I'$ and all twist-zero illegal boundary marked points marked by $B'$.  Note that the latter procedure may create unstable components, which must be contracted, so we denote by $\Gamma' = \text{for}_{B',I'}(\Gamma)$ the new graph that results.

\subsection{The Witten bundle and the cotangent line bundles}
\label{subsec:Wittenbundle}

The crucial bundle for the definition of $r$-spin theory is the {\it Witten bundle} on the moduli space.  Roughly speaking, if $\pi: \mathcal{C} \rightarrow \M_{0,k,l}^{1/r}$ is the universal curve and $\mathcal{S} \rightarrow \mathcal{C}$ is the universal spin bundle with companion bundle $\mathcal{J}:= \mathcal{S}^{\vee} \otimes \omega_{\pi}$, then we~define
\begin{equation}
\label{eq:Wittenbundledef}
\mathcal{W}:= (R^0\pi_*\mathcal{J})_+ = (R^1\pi_*\mathcal{S})^{\vee}_-,
\end{equation}
where the subscripts $+$ and $-$ denote invariant or anti-invariant sections under the universal involution $\widetilde{\phi}: \mathcal{J} \rightarrow \mathcal{J}$ or $\widetilde{\phi}: \mathcal{S} \rightarrow \mathcal{S}$.  To be more precise, defining~$\mathcal{W}$ by \eqref{eq:Wittenbundledef} would require one to deal with derived pushforward in the orbifold-with-corners context, so to avoid this technicality, we define $\mathcal{W}$ by pullback of the analogous bundle from a subset of the closed moduli space $\M_{0,k+2l}^{1/r}$; see \cite[Section 4.1]{BCT1}.


The real rank of the Witten bundle, on a component of the moduli space with internal twists $\{a_i\}$ and boundary twists $\{b_j\}$, is
\begin{equation}\label{eq:bundle_rk}\frac{2 \sum a_i + \sum b_j - (r-2)}{r}.\end{equation}
Furthermore, in \cite[Theorem 5.2]{BCT1}, we prove that in the graded case, where all $b_j=r-2,$ the Witten bundle is canonically relatively oriented relative to the moduli space.  More precisely, for any set $\vec{a} = \{a_i\}_{i \in I}$ of internal twists, we construct an orientation of $\M_{0,B,\vec{a}}^{1/r}$ satisfying certain properties \cite[Proposition 3.12]{BCT1} and a compatible orientation of $T\M^{1/r}_{0,B,\vec{a}} \oplus \mathcal{W}.$ Although both orientations depend on choices, the orientation that they induce on the total space is canonical and independent of choices.

In \cite{BCT1} we prove that the Witten bundle satisfies certain decomposition properties along nodes.  We reiterate these properties here as they are needed in what follows, but we direct the reader to \cite[Section 4.2]{BCT1} for further details.

To set the stage, let $\Gamma$ be a genus-zero graded $r$-spin dual graph, and let $e\in E(\Gamma)$.  Define
$\text{detach}_e(\Gamma)$ to be the ``detaching" of $\Gamma$ at $e$, which is the disconnected graph obtained by cutting the edge $e$.  This creates new tails, so we must extend the marking and (possibly) the anchors: if $e$ is a boundary edge, we mark both newly-created tails $0$ and add no new anchor.  If $e$ is an internal edge, then exactly one of the connected components of $\text{detach}_e(\Gamma)$ is closed (that is, has no open vertices) and unanchored; let $h$ denote the half-edge of $e$ in this component, and let $h'$ denote the other half-edge of $e$.  In this case, we mark $h$ by $0$ and declare it to be an anchor, and we mark $h'$ by the union of the markings of the internal tails $h'' \neq h$ in the same component as $h$. Define $\text{detach}_t(\Gamma),$ for $t \in H^{\text{CB}},$ to be the graph that agrees with $\Gamma$ except that $t$ is no longer considered an element of $H^{\text{CB}}$; all other decorations (including the fact that $t$ is the anchor) remain the same. For $N\subseteq E(\Gamma)\cup H^{\text{CB}}(\Gamma)$, ~$\text{detach}_N(\Gamma)$ denotes the graph obtained from $\Gamma$ by detaching at all elements of $N.$

Given a genus-zero graded $r$-spin dual graph $\Gamma$, let $\widehat{\Gamma}$ be obtained by detaching either an edge or a contracted boundary tail.  Then there are morphisms
\begin{equation}
\label{eq:Wittendecompsequence}
\M_{\widehat{\Gamma}}^{1/r} \xleftarrow{q} \M_{\widehat{\Gamma}} \times_{\M_{\Gamma}} \M_{\Gamma}^{1/r} \xrightarrow{\mu} \M_{\Gamma}^{1/r} \xrightarrow{i_{\Gamma}} \M_{0,k,l}^{1/r},
\end{equation}
where $\M_{\Gamma} \subseteq \M_{0,k,l}$ is the moduli space of marked disks (without $r$-spin structure) specified by the dual graph $\Gamma$.  The morphism $q$ is defined by sending the $r$-spin structure $S$ to the $r$-spin structure $\widehat{S}$ defined by \eqref{eq:hatS}; it has degree one but is not an isomorphism because it does not induce an isomorphism on isotropy groups.  The morphism $\mu$ is the projection from the fiber product; it is an isomorphism, but we distinguish between its domain and codomain because they have different universal objects.  Finally, the morphism $i_{\Gamma}$ is the inclusion.

There are Witten bundles $\mathcal{W}$ on $\M_{0,k,l}^{1/r}$ and $\widehat{\mathcal{W}}$ on $\M_{\widehat{\Gamma}}^{1/r}$, and the decomposition properties are stated in terms of how these bundles are related under pullback via the morphisms \eqref{eq:Wittendecompsequence}.  Specifically, we have the following proposition, in which the symbol $\boxplus$ denotes a vector bundle on a fiber product obtained by taking the direct sum of pullbacks of bundles on the two factors.

\begin{pr}{\cite[Proposition 4.7]{BCT1}}
\label{pr:decomposition}
Let $\Gamma$ be a stable genus-zero graded $r$-spin dual graph with a single edge $e$, and let $\widehat{\Gamma}$ be the detaching of $\Gamma$ along $e$.  Then the Witten bundle decomposes as follows:
\begin{enumerate}[(i)]
\item\label{it:NS} If $e$ is a Neveu--Schwarz edge, then $\mu^*i_{\Gamma}^*\cW = q^*\widehat\cW$.

\item\label{it:Ramondbdryedge} If $e$ is a Ramond boundary edge, then there is an exact sequence
\begin{equation}
\label{eq:decompses}0 \rightarrow \mu^*i_{\Gamma}^*\mathcal{W} \rightarrow q^*\widehat{\mathcal{W}} \rightarrow \TTT_+ \rightarrow 0,
\end{equation}
where $\TTT_+$ is a trivial real line bundle.

\item If $e$ is a Ramond internal edge connecting two closed vertices, write $q^*\widehat\cW = \widehat\cW_1 \boxplus \widehat\cW_2$, in which $\widehat\cW_1$ is the Witten bundle on the component containing the anchor of $\Gamma$ and $\widehat\cW_2$ is the Witten bundle on the other component.  Then there is an exact sequence
\begin{equation}
\label{eq:decompses2}
0 \rightarrow \widehat\cW_2 \rightarrow \mu^*i_{\Gamma}^*\cW \rightarrow \widehat\cW_1 \rightarrow 0.
\end{equation}
Furthermore, if $\widehat\Gamma'$ is defined to agree with $\widehat\Gamma$ except that the twist at each Ramond tail is $r-1$, and $q': \M_{\widehat{\Gamma}} \times_{\M_{\Gamma}} \M_{\Gamma}^{1/r} \rightarrow \M_{\widehat\Gamma'}^{1/r}$ is defined analogously to $q$, then there is an exact sequence
\begin{equation}
\label{eq:decompses3}
0 \rightarrow \mu^*i_{\Gamma}^*\mathcal{W} \rightarrow (q')^*\widehat{\mathcal{W}}' \rightarrow \TTT \rightarrow 0,
\end{equation}
where $\widehat{\mathcal{W}}'$ is the Witten bundle on $\M_{\widehat\Gamma'}^{1/r}$ and $\TTT$ is a line bundle whose $r$th power is trivial.

\item If $e$ is a Ramond internal edge connecting an open vertex to a closed vertex, write $q^*\widehat\cW = \widehat\cW_1 \boxplus \widehat\cW_2$, in which $\widehat\cW_1$ is the Witten bundle on the open component (defined via $\widehat\cS|_{\mathcal{C}_1}$) and $\widehat\cW_2$ is the Witten bundle on the closed component.  Then the exact sequences \eqref{eq:decompses2} and \eqref{eq:decompses3} both hold.
\end{enumerate}
Analogously, if $\Gamma$ has a single vertex, no edges, and a contracted boundary tail~$t$, and $\widehat{\Gamma}$ is the detaching of $\Gamma$ along~$t$, then there is a decomposition property:
\begin{enumerate}[(i)]
\setcounter{enumi}{4}
\item\label{it:cont_bdry_tail} If $\cW$ and $\widehat\cW$ denote the Witten bundles on $\M_{0,k,l}^{1/r}$ and $\M_{\widehat\Gamma}^{1/r}$, respectively, then the sequence \eqref{eq:decompses} holds.
\end{enumerate}
\end{pr}

\begin{rmk}\label{rmk:dec_stronger}
If the edge $e$ in Proposition~\ref{pr:decomposition} is a boundary edge, then the map $q$ is an isomorphism, and in this case, the proposition says that the Witten bundle pulls back under the gluing morphism $\M_{\widehat\Gamma}^{1/r} \rightarrow \M_{0,k,l}^{1/r}$.

In the case of an internal Neveu-Schwarz edge, although $q$ is not an isomorphism, a multisection of $q^*\widehat\cW$ canonically and uniquely induces a multisection of $\mu^*i_{\Gamma}^*\cW$, since the orbits of the action of the automorphisms on the fibers of the latter are contained in the corresponding orbits of the former.  Indeed, on $\widehat\cW$, the group acts by independent scalings of its two summands by roots of unity, while on $\cW$, the action scales the two components by roots of unity that differ by some $\xi^d$, where $\xi$ is an $r$th root of unity, $d$ is a multiple of $\text{gcd}(m,r)$, and $m$ is the multiplicity at one of the half-nodes.
\end{rmk}

The last important line bundles for the calculations that follow are the cotangent line bundles at internal marked points.  These line bundles have already been defined on the moduli space $\M_{0,k,l}$ of stable marked disks (without spin structure), as the line bundles with fiber $T^*_{z_i}\Sigma$, or equivalently, the pullback of the usual cotangent line bundles under the doubling map $\M_{0,k,l} \rightarrow \M_{0,k+2l}$ that sends $\Sigma$ to $C$.  The bundle $\mathbb{L}_i$ on $\M_{0,B,I}^{1/r}$ is the pullback of this cotangent line bundle on $\M_{0,k,l}$ under the morphism that forgets the spin structure.  Because $\mathbb{L}_i$ is a complex line bundle, it carries a canonical orientation.

\section{The definition of open correlators}\label{sec:bc}
 
Our goal is to define intersection numbers by integrating the top Chern class of direct sums of the Witten bundle $\cW$ and the bundles $\mathbb{L}_i$---in other words, by counting the (weighted) number of zeroes of a section of this direct sum.  However, since the moduli space has boundary, such a count is only well-defined after prescribing the boundary behavior of the section.  Our aim, then, is to define canonical boundary conditions for sections of $\cW$ or $\CL_i$ at all codimension-$1$ boundary strata.  The na\"ive guess for such a condition is that the section should decompose into a direct sum according to Proposition~\ref{pr:decomposition}, but this is insufficient to determine intersection numbers uniquely.

To define (roughly) what it means for a smooth section $s$ of $\cW$ or $\CL_i$ to be {\it canonical} at a boundary stratum $\CM_{\Gamma}^{1/r}$, suppose first that $\Gamma$ has a single edge $e$ and the illegal half-edge of $e$ has twist zero.  In this case, the relevant idea has already appeared in \cite{PST14}: we define $s$ to be canonical if it is pulled back along the forgetful map $F_\Gamma:\CM_\Gamma^{1/r}\to\CM_{\CB\Gamma}^{1/r}$, where $\CB\Gamma$ is obtained by detaching $e$ and forgetting the illegal side.  This forgetting procedure is no longer defined, however, if the illegal side of $e$ has positive twist.  Instead, in this case, we define canonicity by the condition that $s$ evaluates ``positively" at the illegal half-node, meaning that it is a nonzero element of the fiber of $J$ in the direction of the grading.  A similar definition applies in the case where $\Gamma$ has a contracted boundary tail.   Since these conditions are defined stratum-wise, it is not at all clear that they can be simultaneously satisfied to yield a global canonical section; we show that this is the case, using deep properties of the positivity phenomenon, in Proposition \ref{prop:pointwise_positivity}.

One particular issue that arises is that there can exist a boundary point $p$ in the moduli space for which one cannot find a section of $\cW$ on a neigborhood $U$ of $p$ that satisfies the positivity constraints at all boundary strata intersecting~$U$.  We solve this problem by slightly modifying the moduli space, replacing it by a subspace $\oPMr \subseteq \oCMr$ obtained by excluding strata~$\CM_\Gamma^{1/r}$ with at least one illegal half-edge of positive twist.  We then define the positivity constraint for a section $s$ in a neighborhood $U$ of such strata by requiring that, for $\Sigma\in U\cap\oPMr$, the evaluation of $s_{\Sigma}$ at certain intervals $I_{h,\Sigma} \subseteq \d\Sigma$ is positive, where, for each $h \in H^B(\Gamma)$, the sets $I_{h,\Sigma}$ converge in the universal curve to a neighborhood of the boundary node corresponding to $h$.  We prove in Proposition \ref{prop:pointwise_positivity}, which is the key geometric idea of this work and the place where the twist $r-2$ at boundary marked points is required, that this requirement is consistent for higher-codimension corners of the moduli space.

Although $\oPMr$ is noncompact, and the boundary conditions depend on choices, the resulting intersection numbers are independent of choices. This is the content of the main theorem of this section, Theorem \ref{thm:int_numbers_well_defined}.

\subsection{Canonical multisections}\label{subsec:canonical_bc}

First, some notation for graph operations.

\begin{definition}\label{def:smoothingraph}\label{rmk:iota}
Let $\Gamma$ be a graded $r$-spin dual graph $\Gamma$ and $e$ an edge connecting vertices $v_1$ and $v_2$.  The {\it smoothing} of $\Gamma$ along $e$ is the graph $d_e\Gamma$ obtained by contracting $e$ and replacing $v_1$ and $v_2$ with a single vertex $v_{12}$ that is declared to be closed if and only if both $v_1$ and $v_2$ are closed. The {\it smoothing} of $\Gamma$ along $h \in H^{CB}$ is the graph $d_h\Gamma$ obtained by erasing $h$ and moving the vertex $v$ from which $h$ emanates from $V^C$ to $V^O$.
If $\Lambda$ is a smoothing of $\Gamma$, then each (half-)edge $h$ of $\Lambda$ corresponds to a unique (half-)edge $\iota_{\Lambda,\Gamma}(h)$ of $\Gamma.$
\end{definition}

For a set $S$ of edges and contracted boundary tails, one can perform a sequence of smoothings, and the graph obtained is independent of the order in which those smoothings are performed; denote the result by $d_S\Gamma$.  Let
\begin{align*}
&\d^!\Gamma = \{\Lambda \; | \; \Gamma = d_S\Lambda \text{ for some } S\},\\
&\d \Gamma = \d^!\Gamma \setminus \{\Gamma\},\\
&\d^B\Gamma = \{\Lambda \in \d \Gamma \; | \; E^B(\Lambda) \cup H^{CB}(\Lambda) \neq \emptyset\}.
\end{align*}
We refer to a boundary half-edge $h,$ or the corresponding half-node, as {\it positive} if $\alt(h) = 0$ and $\text{tw}(h) > 0$.  We write $H^+(\Lambda)$ for the set of half-edges $h$ of a graph $\Lambda$ such that either $h$ or $\sigma_1(h)$ is positive.  We denote by
$\partial^{+}\Gamma \subseteq \partial^{!}\Gamma$
the graphs with at least one positive half-edge, and we write
$\partial^0\Gamma=\partial^B\Gamma\setminus\partial^+\Gamma.$

Let $\oPM_\Gamma$ and $\partial^+\oCM_\Gamma^{1/r}$ be the orbifolds with corners defined by
\[\oPM_\Gamma = \oCM_\Gamma^{1/r}\setminus
\left(\coprod_{\Lambda \in \partial^{+}\Gamma}\CM_\Lambda^{1/r}\right),\qquad \partial^+\oCM_\Gamma^{1/r}=\coprod_{\Lambda\in\partial^+_\Gamma}\CM_\Lambda^{1/r}.\]
Note that the boundary of $\oPM_\Gamma$, which we will denote by $\partial^0\oCM_\Gamma$ in keeping with the notation $\d^0\Gamma$ of the previous paragraph, contains only strata corresponding to graphs without positive half-edges.

We now define what it means for vectors in the fiber of the Witten bundle to ``evaluate positively," and to specify where such positive evaluation is required. 
\begin{definition}
Let $C$ be a graded $r$-spin disk, $q\in C$ a special point, and $v\in\mathcal{W}_C$. The {\it evaluation} of $v$ at $q$ is $\text{ev}_q(v) := v(q) \in J_q$.
\end{definition}

\begin{definition}
A marked orbifold Riemann surface with boundary is {\it partially stable} if it has exactly one internal marking and no boundary markings.  Given a graded $r$-spin disk, let $n:\widehat{C} \to C$ be the normalization map.  A connected component of $\widehat{C}$ is \emph{\stronglypositive}if it contains either (a) a partially stable component, (b) a contracted boundary node, or (c) a component that becomes partially stable after forgetting all illegal half-nodes of twist zero. We analogously define a \stronglypositive dual graph. 

A vector $v \in \cW_C$ \emph{evaluates positively} at a component $D$ of $C$ if $\text{ev}_q(\NNN^*v)$ at each $q\in\partial D$ is positive, with respect to the grading of the boundary or the contracted boundary of $D.$ A vector that evaluates positively at any \stronglypositive component of $C$ is said to be \emph{positive on \stronglypositive components}.
\end{definition}

Positive evaluation will also be required on certain ``intervals" in $\d\Sigma$, which we now define; they should be viewed as a smoothly-varying family of intervals that approximates a family of neighborhoods of the nodes in a nodal surface.

\begin{definition}\label{def:positive_neighborhoods1}
Let $\Gamma$ be a graded graph and let $\Lambda \in \d^+\Gamma$.  A \emph{$\Lambda$-set with respect to $\Gamma$} is an open set $U\subseteq \oCM_\Gamma^{1/r}$ whose closure intersects precisely those~$\CM_\Xi^{1/r}$ with $\Lambda\in \partial^!\Xi$.
A \emph{$\Lambda$-neighborhood with respect to $\Gamma$} of $u\in\oCM_\Gamma^{1/r}$ is a neighborhood $U\subseteq\oCM_\Gamma^{1/r}$ of $u$ that is a $\Lambda$-set with respect to $\Gamma$.  Since there is a unique smooth graph $\Gamma$ with $\Lambda\in\partial^!\Gamma$, we refer to a $\Lambda$-neighborhood with respect to a smooth $\Gamma$ simply as a $\Lambda$-neighborhood.

Let $C$ be a graded marked disk. An \emph{interval} is a connected open set $I\subset \partial\Sigma$ such that, if we write $\d\Sigma = S/\sim$ for a space $S$ homeomorphic to $S^1$, the preimage of $I$ under the quotient map $q: S \rightarrow \d\Sigma$ is the union of an open set with a finite number of isolated points.

Suppose $C\in\CM_\Lambda^{1/r}$, and let $n_h$ be a boundary half-node corresponding to a half-edge $h\in H^B(\Lambda)$.  We say that $n_h$ \emph{belongs} to the interval $I$ if the corresponding node $n_{h/\sigma_1}$ lies in $I$.  In this case, again denoting by $\NNN: \widehat{C} \rightarrow C$ the normalization map and denoting $\widehat{\Sigma} = \NNN^{-1}(\Sigma)$, we write $n^{-1}(I) = I_1 \cup I_2$ in which $I$ is a half-open interval with endpoint $n_h$ and $I_2$ is a half-open interval with endpoint $n_{\sigma_1(h)}$, under the canonical orientation of $\d\widehat{\Sigma}$.

Suppose $U$ is contained in some $\Lambda$-set. Then a \emph{$\Lambda$-family of intervals} for $U$ is a family $\{I_{h}(u)\}_{h\in\Pos(\Lambda),u\in U}$ such that:
\begin{enumerate}
\item
Each $I_{h}(u)$ is an interval in $\partial\Sigma_u,$ where $\Sigma_u \subseteq \pi^{-1}(u)$ is the preferred half in the fiber of the universal curve $\pi:\CC\to U$, and the endpoints of each $I_h(u)$ vary smoothly with respect to the smooth structure of the universal curve restricted to $U$.
\item
For $h \notin\{ h', \sigma_1(h')\}$, we have $I_h(u')\cap I_{h'}(u')=\emptyset$.  If $h=\sigma_1(h')$, we have $I_h(u')\cap I_{h'}(u')\neq\emptyset$ if and only if $u'\in\CM_\Xi^{1/r},$ for some $\Xi$ with $\Lambda\in\partial^!\Xi,$ and $h\in\Pos(\Xi)$ under the locally-defined injection $\iota:H(\Xi)\to H(\Lambda)$.  In this case, $I_h(u')\cap I_{\sigma_1(h)}(u')$ consists exactly of the node $n_{h/\sigma_1}$.
\item There are no markings in $I_h(u')$, and the only half-node that belongs to $I_h(u')$ is $n_{h},$ which is defined precisely when $u'\in\CM_\Xi^{1/r}$ and $h\in\Pos(\Xi).$
\end{enumerate}

If the moduli point $u$ represents the stable graded disk $C$ we write $I_h(C)$ for $I_h(u).$ Figure \ref{fig:intervals} shows the local picture of intervals in smooth and nodal disks.
\end{definition}

\begin{figure}[t]
\centering

\vspace{-1cm}

\begin{tikzpicture}[scale=1]

\tkzDefPoint(1,0){A}\tkzDefPoint(0,1){B}\tkzDefPoint(-1,0){C}
\tkzCircumCenter(A,B,C)\tkzGetPoint{O}
\tkzDrawArc(O,A)(C)
\tkzDefPoint(0,1){D}\tkzDefPoint(-0.707,0.707){E}
\tkzDrawArc[densely dashed,line width = 2pt](O,D)(E)
\tkzDefPoint(0,1){D}\tkzDefPoint(0.707,0.707){E}
\tkzDrawArc[line width = 2pt](O,E)(D)

\tkzDefPoint(-1,2){A'}\tkzDefPoint(0,3){B'}\tkzDefPoint(1,2){C'}
\tkzCircumCenter(A',B',C')\tkzGetPoint{O}
\tkzDrawArc(O,A')(C')
\tkzDefPoint(-0.707,1.293){D'}\tkzDefPoint(0,1){E'}
\tkzDrawArc[densely dashed,line width = 2pt](O,D')(E')
\tkzDefPoint(0.707,1.293){D'}\tkzDefPoint(0,1){E'}
\tkzDrawArc[line width = 2pt](O,E')(D')

\node at (0,0.5) {$n_{h'}$};
\node at (0,1.5) {$n_{h}$};
\node at (1,1) {$I_h$};
\node at (-1,1) {$I_{h'}$};

\tkzDefPoint(1+3,0+1){A}\tkzDefPoint(0+3,1+1){B}\tkzDefPoint(0+3,-1+1){C}
\tkzCircumCenter(A,B,C)\tkzGetPoint{O}
\tkzDrawArc(O,C)(B)
\tkzDefPoint(0.707+3,-0.707+1){D}\tkzDefPoint(0.707+3,0.707+1){E}
\tkzDrawArc[densely dashed, line width = 2pt](O,D)(E)

\tkzDefPoint(3.5+3,0+1){A'}\tkzDefPoint(2.5+3,1+1){B'}\tkzDefPoint(2.5+3,-1+1){C'}
\tkzCircumCenter(A',B',C')\tkzGetPoint{O}
\tkzDrawArc(O,B')(C')
\tkzDefPoint(1.793+3,0.707+1){D'}\tkzDefPoint(1.793+3,-0.707+1){E'}
\tkzDrawArc[line width = 2pt](O,D')(E')

\node at (0.5+3,0+1) {$I_{h'}$};
\node at (2+3,0+1) {$I_h$};

\end{tikzpicture}

\caption{The intervals $I_h$ and $I_{h'}$, corresponding to half-nodes $n_h$ and $n_{h'}$, are drawn as thicker lines over the thinner lines that denote the boundary.  The picture on the right represents a nearby point in the moduli space to the picture on the left, but where the node has been smoothed.}

\label{fig:intervals}
\end{figure}

\begin{definition}\label{def:positive_section}
Let $C$ be a graded $r$-spin disk, and let $A \subseteq \partial\Sigma$ be a subset without legal special points.  Then an element $w\in\cW_\Sigma$ \emph{evaluates positively at~$A$} if $\ev_x(w)$ is positive with respect to the grading for every $x\in A$.

Let $\Gamma$ be a graded graph, $U$ a $\Lambda$-set with respect to $\Gamma$, and $\{I_h\}$ a $\Lambda$-family of intervals for $U$.  Given a multisection $s$ of $\cW$ defined in a subset of $\oPM_\Gamma$ containing $U\cap\oPM_\Gamma$, we say $s$ is \emph{$(U,I)$-positive} (with respect to $\Gamma$) if for any $u'\in U\cap \oPM_\Gamma$, any local branch $s_i(u')$ evaluates positively at each $I_h(u')$.

A multisection $s$ defined in $W\cap\oPM_\Gamma$, where $W$ is a neighborhood of $u\in\oCM_\Lambda^{1/r}$, is \emph{positive near} $u$ (with respect to $\Gamma$) if there exists a $\Lambda$-neighborhood $U\subseteq W$ of $u$ and a $\Lambda$-family of intervals $I_*(-)$ for $U$ such that $s$ is $(U,I)$-positive.

If $W$ is a neighborhood of $\partial^+\oCM_\Gamma^{1/r}$, then a multisection $s$ defined in a set
\begin{equation}\label{eq:U_+}
U_+=\left(W\cap\oPM_\Gamma\right)\cup \bigcup_{\Lambda~\text{\stronglypositive}}{\oPM_\Lambda}
\end{equation}
is \emph{positive} (with respect to $\Gamma$) if it is positive near each point of $\partial^+\oCM_\Gamma^{1/r}$ and at \stronglypositive components.  As above, we omit the phrase ``with respect to $\Gamma$" if $\Gamma$ is smooth.
\end{definition}

\begin{rmk}\label{rmk:different_notions_pos}
Since there are different notions of positivity in this paper, for the benefit of the reader we briefly summarize them and their relationships.
\begin{enumerate}
\item For a non-special boundary point $q$ of a stable $r$-spin disk $C,$ or a contracted boundary component $q$, a vector $s\in J_q,$ is positive if it belongs to the positive ray defined by the grading.
\item For a subset $A$ of the boundary of a stable $r$-spin disk $C$ (which may be a point, a union of intervals, or the whole boundary component), or a contracted boundary $A$, a vector $v \in \cW_C$ is positive at $A$ if, when $v$ is thought of as a global section of $J\to C,$ it evaluates positively at each point of $A,$ in the sense of the first item. 
\item A section of $\cW$ is positive at a subset of the moduli space if it satisfies the positivity constraints of the second item for every moduli point $C$ in this subset, and a properly chosen set $A=A(C)$ (as explained in the above definitions). 
\item A boundary half-edge $h$ and and the corresponding half-node are positive if they are illegal and have a positive twist. We require positivity conditions in the sense of the previous item on boundary strata whose graphs contain these half-edges, where for each $C$ in such a boundary stratum, the positive half-edges will dictate the shape of the set $A$.
\item A strongly positive component in an $r$-spin disk is a component of its normalization that is either partially stable, or becomes partially stable after forgetting all special illegal points of twist $0$, or contains a contracted boundary. On such components the set $A$ from the definition of positivity is the whole boundary or contracted boundary. 
\end{enumerate}
\end{rmk}

The notion of canonicity of a multisection of $\cW$ over (an open subset of) $\oPM_\Gamma$ has two parts: a positivity constraint, in terms of the above definitions, and a requirement that the multisection be pulled back from a smaller graph called the ``base" of $\Gamma$.  To define the base, we first define $\text{for}_{\text{illegal}}(\Gamma)$ to be the graph obtained by forgetting all illegal, twist-zero boundary tails with marking zero, and we define $E^0(\Gamma) \subseteq E^B(\Gamma)$ to be the set of boundary edges with one illegal side of twist zero. 
\begin{definition}
The {\it base} of a graded graph $\Gamma$ is defined as
\[\CB \Gamma = \text{for}_{\text{illegal}} (\detach_{E^0(\Gamma)}(\Gamma)).\]
\end{definition}

The \emph{base moduli} of $\oCM_{\Gamma}^{1/r}$ is the moduli space $\oCM_{\CB\Gamma}^{1/r}.$ It admits a map
\begin{equation}
\label{eq:FGamma}
F_{\Gamma}:\M_{\Gamma}^{1/r}\to\M_{\CB\Gamma}^{1/r}
\end{equation}
that associates to a moduli point the result of the normalization at all boundary nodes \emph{which correspond to edges of $\Gamma$}, and forgets the resulting illegal boundary points which have twist zero.

\begin{nn}\label{nn:fGamma}
Let $f_{\Gamma}:\partial^!\Gamma\to \d^!\CB\Gamma$ be defined by
$f_{\Gamma}(\Gamma')=\Gamma''$ whenever $F_\Gamma$ maps $\CM_{\Gamma'}^{1/r}$ to $\CM_{\Gamma''}^{1/r}.$
Explicitly,
\[\Gamma''=\text{for}_{\text{illegal}} ( \detach_{\iota_{\Gamma,\Gamma'}(E^0(\Gamma))}(\Gamma')\]
(c.f. Definition \ref{rmk:iota}).  Observe that any $h\in H(\Gamma')$ whose vertex is not removed by the $\text{for}_{\text{illegal}}$ operation (though maybe contracted) naturally corresponds to an edge of $\Gamma''$ which we denote by $f_\Gamma(h)$.  Note that, because of contracted components, there may be $h\neq h'\in H(\Gamma')$ with $f_\Gamma(h)= f_\Gamma(h')$.
\end{nn}

\begin{rmk}\label{rmk:F_in_surface_level}
We denote by $\CB C$ the graded surface that corresponds to the moduli point $F_{\Gamma(C)}(C)$, or by $\CB\Sigma$ its preferred half.  Let $\widehat{C}'$ be the subsurface of the normalization of $C$ obtained by removing components that are neither stable nor partially stable. It admits a natural map
$\phi_\Gamma:\widehat{C}'\to \CB C.$
\end{rmk}
The following observations are straightforward. First, analogously to \cite[Observations 3.14 and 3.28]{PST14}, we have the following compatibility relation:

\begin{obs}\label{obs:for_comp}
Let $\Gamma$ be a graded graph and let $\Gamma'\in\partial\Gamma$.  Then $F_{\Gamma}$ takes $\oCM_{\Gamma'}^{1/r}$ to $\oCM_{f_{\Gamma}(\Gamma')}^{1/r},$ and $\CB f_{\Gamma}(\Gamma')=\CB\Gamma',$ and moreover $F_{f_{\Gamma}(\Gamma')}\circ F_{\Gamma}|_{\oCM_{\Gamma'}^{1/r}}=F_{\Gamma'}.$
\end{obs}

The next observation, analogous to \cite[Observation 3.15]{PST14} is central for proving the independence of choices for the intersection numbers.
\begin{obs}\label{obs:dim_of_base}
Let $\Gamma$ be a smooth graded graph and $\Lambda\in\d^0\Gamma$ be such that $\CB\Lambda$ has no partially stable components. Then
\[\dim_\R\CM_{\CB\Lambda}^{1/r}\leq\dim_\R\CM_\Gamma-2.\]
\end{obs}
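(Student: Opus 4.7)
The plan is to compute $\dim_\R\CM_{\CB\Lambda}^{1/r}$ by tracking how the various pieces of combinatorial data of $\Lambda$ affect the moduli dimension, and then comparing with $\dim_\R\CM_\Gamma^{1/r}$ via a short case analysis.

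First, I would express $\dim_\R\CM_\Lambda^{1/r}$ in terms of $\dim_\R\CM_\Gamma^{1/r}$ by summing the codimension contributions of the various decorations of $\Lambda$: each boundary edge contributes real codimension one, each internal edge contributes real codimension two, and each contracted boundary tail contributes real codimension one. This yields
\[
\dim_\R\CM_\Lambda^{1/r}=\dim_\R\CM_\Gamma^{1/r}-|E^B(\Lambda)|-2|E^I(\Lambda)|-|H^{CB}(\Lambda)|.
\]

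Next, I would analyze the base operation $\CB\Lambda=\text{for}_{\text{illegal}}\bigl(\text{detach}_{E^0(\Lambda)}(\Lambda)\bigr)$. Because $\Lambda\in\d^0\Gamma$ contains no positive half-edges, Proposition~\ref{prop:compatibility_lifting_parity} forces every boundary edge of $\Lambda$ to be Neveu--Schwarz with one legal side of twist $r-2$ and one illegal side of twist zero, so $E^0(\Lambda)=E^B(\Lambda)$. The detach operation replaces each boundary edge by two marked tails without altering the moduli dimension, while forgetting each of the $|E^B(\Lambda)|$ illegal twist-zero tails decreases the real dimension by one, since a boundary marked point contributes a one-real-parameter family of positions along $\partial\Sigma$. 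The ``no partially stable components'' hypothesis is precisely what ensures that this forgetting does not produce any component that must be further contracted, thereby avoiding any additional, harder-to-track dimension drop. Combining the two computations yields
\[
\dim_\R\CM_{\CB\Lambda}^{1/r}=\dim_\R\CM_\Gamma^{1/r}-2|E^B(\Lambda)|-2|E^I(\Lambda)|-|H^{CB}(\Lambda)|.
\]

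Finally, a case analysis using $\Lambda\in\d^0\Gamma\subseteq\d^B\Gamma$ completes the argument. When $E^B(\Lambda)$ is non-empty, the factor $-2|E^B(\Lambda)|$ alone gives the claimed bound, and the same is true when $E^I(\Lambda)$ is non-empty. In the remaining configurations, which feature only contracted boundary tails, one must use the stability constraint at the closed vertex carrying each CBT, together with the smoothness of $\Gamma$ and the partial-stability hypothesis, to rule out the critical low-complexity case of a single isolated CBT. I expect this last combinatorial step, tracing the interplay between closed-vertex stability and the base operation, to be the principal obstacle of the proof.
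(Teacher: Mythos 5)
The paper does not actually give a proof of this observation --- it is stated only with the remark ``analogously to \cite[Observation 3.15]{PST14}'' --- so there is no step-by-step argument to compare against, and one has to judge your proposal on its own merits.

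Two things go wrong. First, your dimension formula for $\CM_{\CB\Lambda}^{1/r}$ is not correct as an equality. The operation $\text{for}_{\text{illegal}}$ can produce unstable components that are then \emph{contracted}, and this does happen in situations the hypothesis does not rule out (the hypothesis only excludes \emph{partially stable} components, which is a strictly narrower class than ``not stable''). For instance a vertex $v$ of $\Lambda$ carrying two illegal twist-zero half-edges and a single boundary tail is stable in $\Lambda$ but becomes a disk with one boundary special point after the base operation, hence is contracted; when that happens the actual dimension drop is \emph{smaller} than what your formula $\dim\CM_\Gamma - 2|E^B|-2|E^I|-|H^{CB}|$ predicts, so the clean formula cannot be the foundation of the argument (even though the drop still turns out to be at least $2$ in such examples). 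Second, and more seriously, the contracted-boundary case is not fixable by the route you sketch. Take $k=0$, $l\geq 2$, and $\Lambda$ the graph with one closed vertex carrying the $l$ internal tails and a single contracted boundary tail. Then $E^B(\Lambda)=E^I(\Lambda)=\emptyset$, so $\CB\Lambda=\Lambda$ identically, and the single vertex of $\CB\Lambda$ has $l\geq 2$ internal marks, hence is \emph{not} partially stable. Yet $\dim_\R\CM_{\CB\Lambda}^{1/r}=\dim_\R\CM_\Gamma - 1$, which violates the stated bound. No ``stability constraint at the closed vertex'' is going to close this gap: the vertex is perfectly stable. This is a genuine sharpness in the statement; in the only place the observation is invoked (the proof of Lemma~\ref{lem:partial_homotopy}) the CBT-only strata are covered separately by the positivity argument on the strongly positive locus $V_+$, so the observation is tacitly understood as excluding that case (e.g.\ read the hypothesis as ``$\Lambda$ not strongly positive'' rather than ``$\CB\Lambda$ without partially stable components''). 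In short, your plan correctly disposes of the $E^B\neq\emptyset$ and $E^I\neq\emptyset$ cases, but the remaining case you flag as the ``principal obstacle'' is not an obstacle that stability considerations can overcome; it must be excluded by (re)interpreting the hypothesis or by appealing to the strong positivity of CBT strata in the application, not proved from the stated assumptions.
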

The next claims summarize the behavior of $\cW$ and $\mathbb{L}_i$ under forgetful maps:
\begin{obs}\label{obs:Witten_pulled_back}
Because $\mathcal{W} = \text{For}^*_{B',I'}\mathcal{W}$ for all $B',I' \subseteq \mathbb{Z}$ (see \cite[Equation (4.3)]{BCT1}), there is a canonical isomorphism between the Witten bundle $\cW$ on $\oCM_{\Gamma}^{1/r}$ and the pullback $F_{\Gamma}^*\cW$ of the Witten bundle from $\oCM_{\CB\Gamma}^{1/r}$.
\end{obs}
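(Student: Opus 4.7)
The strategy is to factor $F_\Gamma$ into two steps---detaching all boundary edges in $E^0(\Gamma)$ and then forgetting the illegal twist-zero boundary tails created by the detachment---and to invoke the previously-established compatibility of the Witten bundle with each operation separately.

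For the first step, first observe that every edge $e \in E^0(\Gamma)$ is Neveu--Schwarz: by definition its illegal half-edge has twist $0$, so by the twist-sum relation \eqref{eq:twist_sum_at_nodes} the other half-edge has twist $r-2$, and neither $0$ nor $r-2$ is congruent to $-1 \pmod{r}$ (for $r \geq 2$). Case \ref{it:NS} of Proposition~\ref{pr:decomposition}, together with Remark~\ref{rmk:dec_stronger}, then provides for each such edge a canonical identification of the Witten bundle on $\oCM_\Gamma^{1/r}$ with the pullback of the Witten bundle from the detached moduli space via the gluing morphism. Iterating over the finitely many edges of $E^0(\Gamma)$ yields a canonical isomorphism between $\cW$ on $\oCM_\Gamma^{1/r}$ and the pullback of $\cW$ from $\oCM_{\detach_{E^0(\Gamma)}(\Gamma)}^{1/r}$; the order of detachment is immaterial since each step is canonical and the underlying graph operations $\detach_e$ at distinct edges commute.

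For the second step, apply the identity $\cW = \text{For}_{B',I'}^*\cW$ cited in the observation (see \cite[Equation (4.3)]{BCT1}), with $B'$ the set of illegal twist-zero boundary tails produced by the first step and $I' = \emptyset$. This produces a canonical isomorphism between the Witten bundle on $\oCM_{\detach_{E^0(\Gamma)}(\Gamma)}^{1/r}$ and the pullback of the Witten bundle from $\oCM_{\CB\Gamma}^{1/r}$ along the forgetful map. Composing the two canonical isomorphisms, and observing that the composite underlying map coincides with $F_\Gamma$ by definition, produces the desired canonical identification $\cW \cong F_\Gamma^*\cW$. The only subtle point is the order-independence of the iterated detachment, which follows directly from the canonical nature of each individual application of Remark~\ref{rmk:dec_stronger}; once this is noted the argument is essentially a concatenation of citations.
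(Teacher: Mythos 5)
Your proof is correct and matches the paper's intended reasoning: the terse ``because'' clause in the observation cites only the forgetful-map identity, implicitly relying on Proposition~\ref{pr:decomposition}\eqref{it:NS} and Remark~\ref{rmk:dec_stronger} for the detaching of the Neveu--Schwarz edges in $E^0(\Gamma)$, which you correctly spell out. Your twist computation showing that every $E^0(\Gamma)$-edge is Neveu--Schwarz, and your factoring of $F_\Gamma$ into detach-then-forget, are exactly the natural (and presumably intended) justifications that the paper leaves unstated.
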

\begin{obs}\label{obs:Li_pulled_pre}
If the component that contains the internal tail labeled~$i$ in $\CB\Gamma$ is stable, then $\CL_i\to\oCM_\Gamma^{1/r}\simeq F_\Gamma^*(\CL_i\to \oCM_{\CB\Gamma}^{1/r})$ canonically.
\end{obs}
Indeed, $\CL_i$ is pulled back along the maps appearing in the definition $F_\Gamma$, unless the stable component containing the internal tail $i$ becomes unstable along the way.  This happens only if, in $\Gamma$, the vertex containing tail $i$ has a unique internal half-edge and all boundary half-edges are illegal of twist zero.

We are now ready to define the canonical boundary conditions we seek.  In what follows, for an orbifold vector bundle $E$ over an orbifold with corners $M$, we denote by $C_m^{\infty}(E)$ the space of smooth multisections; see the appendix for our conventions and notation regarding multisections.
\begin{definition}\label{def:canonical for Witten}
Let $\Gamma$ be a smooth graded graph with only open vertices, let $U_+$ be as in \eqref{eq:U_+}, and let $U$ be a set containing $\partial^0\oPM_\Gamma\cup U_+$.  Then a smooth multisection $s$ of $\cW$ over $U$ is \emph{canonical} if
\begin{enumerate}
\item $s$ is positive, and
\item $s$ is \emph{pulled back from the base}, in the sense that for any $\Lambda\in \partial^0\Gamma$, we have
\begin{equation}\label{eq:pb_Witten}s|_{\CM_\Lambda^{1/r}}=F_\Lambda^*s^{{\CB\Lambda}}
~\text{for some }s^{\CB\Lambda}\in C_m^\infty(\CM_{\CB\Lambda}^{1/r},\cW).\end{equation}
\end{enumerate}
In case $U=\oPM_\Gamma$, we say that $s$ is a {\it global canonical multisection}. A section that is positive and satisfies the second item of the definition for $\Lambda$ in some set $A\subseteq \partial^0\Gamma$ is said to be \emph{canonical with respect to $\bigcup_{\Lambda\in A} \CM_\Lambda^{1/r}.$}
\end{definition}

\begin{definition}\label{def:canonical for L_i}
A smooth multisection $s$ of $\CL_i\to\partial\oPM_{\Gamma},$ for $\Gamma$ smooth and graded, is called \emph{canonical} if for every
$\Lambda\in\partial^0\Gamma$ with $\CB\Lambda$ stable, we have
\begin{equation}\label{eq:pb}s|_{\CM_{\Lambda}^{1/r}}=F_{\Lambda}^*s^{\CB\Lambda}\end{equation}
for some $s^{\CB\Lambda}\in C_m^\infty(\oPM_{\CB\Lambda},\cW)$.  A global multisection is canonical if its restriction to $\partial\oPM_{\Gamma}$ is canonical. If $A\subseteq\partial^0\Gamma$, then $s$ is \emph{canonical with respect to $\bigcup_{\Lambda\in A} \CM_\Lambda^{1/r}$} if~\eqref{eq:pb} holds for any $\Lambda\in A$ with a stable base.
\end{definition}

Throughout what follows, we consider the (uniquely determined) multisections $\{s^{\CB\Lambda}\}_{\Lambda\in\partial^0\Gamma}$ in the definitions above as part of the information of a canonical multisection, both for $\CL_i$ and for $\cW$.  For a bundle obtained as a direct sum of $\cW$ and cotangent line bundles, we define a canonical multisection to be a direct sum of canonical multisections of the corresponding bundles.

\begin{obs}\label{obs:sum_of_canonical}
Sums of canonical multisections of $\cW \to \oPM_{\Gamma}$ or $\CL_i$, or multiples of canonical multisections by a positive scalar, are canonical.

This is easy to see for multisections of $\CL_i$.  The proof for multisections of $\cW$ is also straightforward, except for verifying that the sum of positive multisections is positive.  For this, suppose that $s_1$ and $s_2$ are positive.  For any $u\in\CM_\Lambda^{1/r}\subseteq\partial^+\oCMr$, there are open $\Lambda$-neighborhoods $U_i$ and families of intervals $I_{i,h}$ for $U_i$ on which $s_i$ is positive. For each $h$, the intersection $I_{1,h}(u)\cap I_{2,h}(u)$ contains an interval around the node $n_h.$ Thus, in some open set $V\subseteq U_1\cap U_2,$ which is itself a $\Lambda$-neighborhood, the collection $\{\hat{I}_h=I_{1,h}(-)\cap I_{2,h}(-)\}$ forms a $\Lambda$-family of intervals for $V$. $s_1+s_2$ is then $(V,\hat{I})$-positive.
\end{obs}

\subsection{Definition of intersection numbers}\label{subsec:int_numbers}
We refer the reader to the appendix for our definition of relative Euler class and of the weighted cardinality of the zero locus of a multisection.
Our first main theorem is the following:
\begin{thm}\label{thm:int_numbers_well_defined}
Let $E\to\oCMr$ be the bundle
\[E := \bigoplus_{i\in [l]}\CL_i^{\oplus d_i}\oplus\cW,\] and assume that $\text{rank}(E)=\dim\oCMr.$
Then there exists $U_+=U\cap\oPMr$, where $U$ is a neighborhood of $\partial^+\oCMr$ such that $\oPMr\setminus U$ is a compact orbifold with corners, and a nowhere-vanishing canonical multisection $\mathbf{s}\in C_m^\infty(U_+\cup\partial^0\oCMr,E)$.
Thus, one can define, using the canonical relative orientation of $\mathcal{W}$ specified in \cite[Theorem 5.2]{BCT1}, the Euler number
\[\int_{\oPMr}e(E ; \mathbf{s})\in\mathbb{Q}.\]
The result is independent of the choice of $U$ and $\mathbf{s}$.
\end{thm}

An equivalent formulation is that transverse global canonical multisections exist, and that the weighted number of zeroes of such a multisection is independent of the choice of the transverse global canonical multisection.

\begin{definition}
With the above notation, when $\text{rank}(E)=\dim\oCM_\Gamma^{1/r}$, we define the \emph{open $r$-spin intersection numbers} (or {\it correlators})~by
\[
\left\langle \prod_{i\in [l]}\tau^{a_i}_{d_i}\sigma^k \right\rangle^{1/r,o} = \int_{\oPMr}e(E ; \mathbf{s})\in\mathbb{Q},
\]
for any canonical multisection $\mathbf{s}$ that does not vanish at $\partial^0\oCMr\cup U_+$, where~$U_+$ is as above.  When $\text{rank}(E)\neq\dim\oCM_\Gamma^{1/r}$, the integral is defined to be zero.
If $d_i=0$ for some $i$, we typically omit the subscript $0$ in the symbols $\tau^a_0$. In the unstable range $2l+k\le 2$, we define the open $r$-spin correlators to be zero.
\end{definition}
%
%
\begin{prop}\label{prop:no_zeroes_for_Witten_without_bdry_markings}
If $k=0$ then there exists a global canonical nowhere-vanishing $s\in C_m^\infty(\oPM_{0,0,\vec{a}},\cW).$
In particular, all open $r$-spin correlators on the moduli space of disks without boundary marked points vanish.
\end{prop}

The proof of the theorem and the proposition are relegated to Section \ref{sec:constructions_bc} below. However, we do prove here the key geometric proposition required for the construction, in order to illuminate the necessity of working with $\oPMr$ rather than $\oCMr$ and of constraining all boundary twists to be $r-2$:
\begin{prop}\label{prop:pointwise_positivity}
The following positivity claims hold:
\begin{enumerate}
\item
Let $C$ be a stable or a partially stable $r$-spin disk. Then there exists $w\in\cW_{C}$ that evaluates positively at each \stronglypositive~component~of~$C$.
\item
Let $U$ be a contractible $\Lambda$-neighborhood of $\Sigma\in\partial^+\oCM_\Gamma$, where $\Gamma$ is connected, smooth, and graded and $\Lambda\in\partial^!\Gamma$ has no internal edges nor illegal boundary half-edges of twist zero.  Let $(I_h)_{h\in\Pos(\Lambda)}$ be a $\Lambda$-family of intervals for $U.$ Then there exists a $(U,I)$-positive section $s$.
\end{enumerate}

\end{prop}

Our basic tool for constructing positive sections in the proof of Proposition~\ref{prop:pointwise_positivity} is the following lemma:

\begin{lemma}\label{lem:surj_of_eval}
Suppose $C$ is a smooth pre-stable $r$-spin disk with a lifting, or a smooth pre-stable graded $r$-spin sphere.  Let $h=deg(|J|)$, and let $\alpha$ and $\beta$ be non-negative integers with $\alpha+ 2\beta \leq h+1$.  Then for any distinct boundary points $p_1,\ldots, p_\alpha,$ and internal points $p_{\alpha+1},\ldots, p_{\alpha+\beta}$, the total evaluation map
\[\bigoplus_i \ev_{p_i}:\cW_{C}\to\bigoplus_{i=1}^\alpha |J|_{p_i}^{\tilde\phi}\oplus\bigoplus_{i=\alpha+1}^{\alpha+\beta} |J|_{p_i}\] is surjective; here we use the canonical identification $H^0(J) = H^0(|J|)$ to identify the fiber of $\cW$ with $H^0(|J|)$. In particular, for any $\alpha$ distinct boundary points and $\beta$ distinct internal points, with $\alpha+2\beta=h$, there exists a unique (up to a real scalar) nonzero section of $\cW$ vanishing at all of them. 

If $C$ has a Ramond marking $p$ and no point with $-1$ twist, in particular if it has a contracted boundary, then the evaluation map at $p$ is surjective.
\end{lemma}
\begin{proof}
Let $n:= \alpha+2\beta$, and let $\{q_1, \ldots, q_n\} \subset C$ consist of the points $p_i$ together with the conjugates of $p_{\alpha+1}, \ldots, p_{\alpha+\beta}$.  Then the short exact sequence
\[0\to |J|\left(-\sum_{i=1}^n [q_i]\right)\to |J|\to\bigoplus_i |J|_{q_i}\to 0\]
induces the long exact sequence
\begin{equation}
\label{eq:evles}
\ldots\to H^{0}(|J|)\to\bigoplus_{i=1}^n |J|_{q_i}\to H^{1}(|J|(-\sum [q_i]))\to\ldots,
\end{equation}
and $\oplus_i \ev_{q_i}$ is obtained from \eqref{eq:evles} by taking $\tilde{\phi}$-invariant parts.  We have
\[h^1\left(|J|\left(-\sum q_i\right) \right) = h^0\left(|J|^{\vee} \left(\sum q_i\right) \otimes \omega_{|C|}\right) = 0,\]
since the latter has negative degree.  This proves the first part of the claim, where the uniqueness of the nonzero section follows from the fact that, if $v_1$ and $v_2$ both vanish at $p_1, \ldots, p_{\alpha+\beta}$, then some linear combination $\lambda v_1 + \mu v_2$ with $\lambda,\mu \in \mathbb{R}^*$ would have $\deg(|J|)+1$ zeroes, which is impossible for a nonzero section.  The proof of the second claim is similar.
\end{proof}
%
%
\begin{proof}[Proof of Proposition~\ref{prop:pointwise_positivity}]
For the first item, note that each connected component of $C$ may have at most one \stronglypositive component. Since the Witten bundle decomposes as a direct sum over different connected components, it is enough to prove the claim for a connected $\Sigma$ that is \stronglypositive\!\!\!.
We use Lemma \ref{lem:surj_of_eval}. If $C$ is closed and has a contracted boundary node, we choose $w$ to be an arbitrary vector that evaluates positively at that node. If $C$ is open and it has a single internal Ramond marked point and no boundary marked points, then $\cW_C$ is a real line and we choose $w$ to be any nonzero element that evaluates positively at one boundary point (hence at any boundary point, since $\deg(|J|_C)=0$ in this case). Note that $w$ is $\text{Aut}(C)$-invariant.

For the second item, assume first that $\Aut(\Lambda)$ is trivial. We also assume that $\rk(\cW)>0$ since otherwise the statement is trivially true. Fix a point in $U$ corresponding to a smooth, open $C$.  Note that if $w\in\cW_C$ is a nonzero vector that is positive at some boundary point $p$ and negative at another boundary point $q$, then the total number of zeroes of $w$ in the boundary arc from $p$ to $q$ (or from $q$ to $p$), plus the number of boundary markings along this arc, is odd. 

Our strategy will thus be to show that one can choose a set $Z=Z_C$ of $z$ boundary points in $A=\partial\Sigma\setminus\left(\sqcup_{h\in\Pos(\Lambda)} I_{h}(C)\right)$, where
\[z\leq \rk\cW-1,\;\;\;\;\;\;z\equiv\rk(\cW)-1\!\mod 2,\]
so that in any connected component $K$ of $A$ (which is an interval), the total number of boundary markings in $K$ plus $|Z\cap K|$ is even. We shall then choose an arbitrary set $Z^c$ of $\frac{\rk(\cW)-1-z}{2}$ internal points.   By Lemma \ref{lem:surj_of_eval}, one can find a nonzero vector $w\in\cW_C$ with simple zeroes at $Z\cup Z^c,$ and, possibly by replacing $w$ with $-w,$ this vector satisfies the positivity constraints we want.

Since $U$ is contractible, we can then construct a diffeomorphism $\phi:\CC(U')\to U'\times C$ between the universal curve restricted to $U'=U\cap\oPM_{\Gamma}$ and $U'\times\Sigma$, which takes the fiber of $[C']$ to $([C'],\Sigma)$ and satisfies $\phi(I(C')_h)=([C'],I(C)_h)$.  We then define the section at $[C']$ as the unique section, up to rescaling, whose zeroes are $\phi^{-1}([C'],Z\cup Z^c).$ The real scaling factor is chosen so that the resulting section $s$ is smooth and $s_{[C]}=w$. By construction $s$ is $(U,I)$-positive.

It is left to construct $w.$ The first step is the following combinatorial claim, whose proof we give at the end of the proposition:
\begin{lemma}
\label{lem}
For a vertex $v$ of $\Lambda$, write $\tdeg(v) = \rk(\cW_v)+m_v-1$, where $m_v$ is the number of legal half-edges of $v$ with twist smaller than $r-2.$ Let $k_v$ denote the number of legal half-edges of $v$ with twist $r-2$, which by the assumptions on $\Lambda$ must be tails.
\begin{enumerate}
\item\label{it:lem_lem_`} For every vertex $v$ we have that $\tdeg(v) = k_v \! \mod 2$.
\item\label{it:lem_lem2} If $\rk(\cW)>0$, then for every vertex $v$ we have $\tdeg(v) \geq 0$.
\item\label{it:lem_lem3} Furthermore, $
\rk(\cW)-1=\sum_{v\in V(\Lambda)}\tdeg(v).
$
\end{enumerate}
\end{lemma}
Using the lemma we can describe the strategy for constructing the vector~$w$.  We define $Z$ by its intersections $Z\cap K$ with each connected component $K$ of $A$.  We choose $Z_K$ to be a set of $k_K\in\{0,1\}$ points in $K,$ where $k_K$ is $|K\cap\{x_i\}_{i\in B}| \mod 2.$ 
Thus, in order to finish, we must show that
\begin{equation}\label{eq:intermidiate_for_positivity}
z:=\sum_{\substack{\text{$K$ a connected}\\\text{component of $A$}}}k_K\leq \rk(\cW)-1\quad\text{and}\quad z\equiv\rk(\cW)-1\mod 2.
\end{equation}

Any interval $K$ is associated a vertex of $\Lambda$, the vertex $v$ that corresponds to the component $C_v$ of the normalization of $C$ containing $K$. Denote this relation by $K\to v$.  By item \ref{it:lem_lem3} of Lemma~\ref{lem}, equation \eqref{eq:intermidiate_for_positivity} will follow from
\begin{align}
&\forall v\in V(\Lambda),~\sum_{K\to v}k_K \leq \tdeg(v)\label{eq:main_for_positivity},\\
&\forall v\in V(\Lambda),~\sum_{K\to v}k_K = \tdeg(v) \mod 2.\label{eq:main_for_positivity2}
\end{align}
And indeed, by the definition of $k_K$ and the first item of Lemma~\ref{lem}, we have
\begin{equation}
\label{eq:2_for_pos}
\sum_{K\to v}k_K  \equiv k_v \equiv \tdeg(v) \mod 2,
\end{equation}
which implies equation~\eqref{eq:main_for_positivity2}.  Moreover, for any vertex $v$ with $\rk(\cW_v)=0$, the second item of Lemma~\ref{lem} implies that equation~\eqref{eq:main_for_positivity} holds.

We are left with proving equation~\eqref{eq:main_for_positivity} when $\rk(\cW_v)>0$. Here, the boundary twists being $r-2$ is crucial.  Write $k_v$, $m_v$, and $n_v$ for the numbers of legal boundary points with twist $r-2$, legal boundary points with twists less than $r-2$, and illegal boundary points of $C_v$, respectively. By assumption, the twists of the illegal points are nonzero, and hence are at least $2$. Thus,
\begin{align}
\label{eq:1_for_pos}
\tdeg(v) &= \rk(\cW_{C_v})+m_v-1\\
\nonumber&\geq \left\lceil\frac{2n_v+(r-2)k_v-(r-2)}{r}\right\rceil+m_v-1\\
\nonumber&\geq \left\lceil\frac{2\min(k_v,n_v)+(r-2)\min(k_v,n_v)-(r-2)}{r}\right\rceil+m_v-1\\
\nonumber&= \left\lceil\frac{r\min(k_v,n_v)-(r-2)}{r}\right\rceil+m_v-1\\
\nonumber&= \min(k_v,n_v)+m_v-1=\min(k_v+m_v-1,n_v+m_v-1),
\end{align}
where we have used the integrality of $\tdeg(v).$
The number of intervals that may contain a boundary marked point of twist $r-2$ is, of course, no more than $k_v.$

There now are several cases to check in order to prove \eqref{eq:main_for_positivity}:
\begin{itemize}
    \item The case $n_v\geq k_v$.  In this case, if $m_v \geq 1$, then $\tdeg(v)\geq k_v\geq \sum_{K\to v}k_K$, proving the claim for $v$.  If $m_v = 0$, then $\tdeg(v)\geq k_v-1$, but then, by~\eqref{eq:2_for_pos}, we again have $\tdeg(v)\geq k_v,$ so the claim still holds.
\item The case $k_v\geq n_v+2.$ Now ~\eqref{eq:1_for_pos} can be strengthened to give the bound
\begin{align*}
    \tdeg(v) &\geq \left\lceil\frac{2n_v+(r-2)(n_v+2)-(r-2)}{r}\right\rceil-1+m_v\\
    &\geq m_v-1+n_v+1=m_v+n_v,
    \end{align*}
where again we have used the integrality of $\tdeg(v).$ Since $n_v+m_v=|\{K \; |\; K\to v\}|\geq \sum_{K\to v}k_K$, equation \eqref{eq:main_for_positivity} holds in this case as~well.
\item The case $k_v=n_v+1.$ If at least one of the $m_v$ legal points or of the internal points has a positive twist, then the previous argument works here as well. Thus, suppose that $k_v=n_v+1$, all internal points have twist zero, and all $m_v$ legal points of twist less than $r-2$ have twist $0$. In this case, $\tdeg(v)=n_v+m_v-1$, and we can break into three sub-cases:
\begin{itemize}
    \item If $m_v\geq 2$ then clearly $\tdeg(v)\geq k_v.$
    \item If $m_v=1$, then $\tdeg(v)=n_v=k_v-1$, but this contradicts ~\eqref{eq:2_for_pos}.
    \item  If $m_v=0$, then $\tdeg(v)=n_v-1=k_v-2$.  But in this case, there must be at least one interval between illegal boundary points that contains more than one legal marking (of twist $r-2$), and hence, $\sum_{K\to v}k_K\leq k_v-2=\tdeg(v)$ and again we are done.  
\end{itemize}

\begin{figure}\label{fig:finding_positive_one_component}
 \centering
\begin{tikzpicture}[scale=0.4]
\draw (0,0) circle (3 cm);

\draw(-0.2,2.8) -- (0.2,3.2);
\draw(-0.2,3.2) -- (0.2,2.8);

\filldraw[fill=black, draw=black] (1.3017, 2.7029) circle (0.2 cm);

\filldraw[fill=black, draw=black] (2.3455, 1.8705) circle (0.2 cm);

\filldraw[fill=white, draw=black] (2.9245, 0.6676) circle (0.2 cm);

\filldraw[fill=white, draw=black] (2.9245, -0.6676) circle (0.2 cm);


\filldraw[fill=black, draw=black] (1.3017, -2.7029) circle (0.2 cm);

\draw(-0.2,-2.8) -- (0.2,-3.2);
\draw(-0.2,-3.2) -- (0.2,-2.8);

\filldraw[fill=black, draw=black] (-1.5, 2.598) circle (0.2 cm);

\filldraw[fill=white, draw=black] (-2.598, 1.5) circle (0.2 cm);

\filldraw[fill=black, draw=black] (-3,0) circle (0.2 cm);

\filldraw[fill=black, draw=black] (-2.598, -1.5) circle (0.2 cm);

\filldraw[fill=white, draw=black] (-1.5, -2.598) circle (0.2 cm);

\end{tikzpicture}
 \caption{The scheme by which we create a positive section, for simplicity in the case $m_v=0$, using the choice of zeroes.  Filled circles denote legal points (of twist $r-2$), unfilled circles denote illegal points, and $\times$s denote the placement of zeroes.}
\end{figure}
\end{itemize}
Equations~\eqref{eq:main_for_positivity} and \eqref{eq:main_for_positivity2} are now proven, and the proposition follows in the case $|\Aut(\Lambda)|=1$.  If $|\Aut(\Lambda)|>1$, then by our assumptions on $\Lambda,$ $\Gamma$ and internal labelings, it must hold that $k>1$ and some boundary points have the same label. Let $\widetilde U$ be a cover of $U$ obtained by changing the labels to injective labels, so that $|\text{Aut}(\widetilde{U})| = 1$ and
$\widetilde U/G\cong U$ for a finite group $G$.  Construct on $\widetilde{U}$, a section $\tilde s$ that satisfies the requirements of the proposition. Then, if $\tilde{q}: \widetilde{U}/G \rightarrow U$ is the quotient map, the section $s$ defined as the averaging $s(u) = |G|^{-1}\sum_{v\in \tilde{q}^{-1}(u)}\tilde{s}(v)$ satisfies the requirements. 
\end{proof}
\begin{proof}[Proof of Lemma~\ref{lem}]
We start with the first claim, and we split into the cases of $2\mid r$ and $2\nmid r$.
When $2\mid r$, by \eqref{parity} and \eqref{eq:bundle_rk}, the quantity $k_v+m_v$, which is the number of legal (boundary) half-edges at $v$, is congruent modulo $2$ to
\[\frac{2 \sum a_i + \sum b_j - (r-2)}{r}+1=\rk(\cW_v)+1.\]

If $2\nmid r$, then Proposition \ref{prop:compatibility_lifting_parity} implies that the legal half-edges are those whose twist is odd, of which there are $k_v+m_v$.  By equation \eqref{eq:bundle_rk} and $2\nmid r$,
\[\rk(\cW_v) +1= \frac{2 \sum a_i + \sum b_j +2}{r} = |\{j \; | \; 2\nmid b_j\}|~\text{mod}~2=k_v+m_v~\text{mod}~2.\]
We turn to the second statement.  First, note that $\tdeg(v)<0$ only if $\rk(\cW_v)=m_v=0$. In this case, by the previous part, $2\nmid k_v$ and is hence positive. Since $\rk(\cW)>0,$ $v$ must touch an edge.
Furthermore, since $\Lambda$ does not contain an illegal half-edge of twist $0$, and $m_v=0$, $v$ must touch the illegal side of this edge, whose twist must be positive. The sum of the twists of this 
half-edge and the legal tails of $v$ is more than $r-2$, hence using equation \eqref{eq:bundle_rk} we find that $\rk(\cW_v)>0$, contradicting the assumptions. We conclude that $\tdeg(v)\geq0.$

For the last claim, we use the decomposition properties of the $\cW$ to conclude
\[\rk(\cW)-1 = \sum_{v\in V(\Lambda)}(\rk(\cW_v)-1) + |E\setminus E^{\text{Ramond}}|=\sum_{v\in V(\Lambda)}\tdeg(v),\]
where $E^{\text{Ramond}}$ are the Ramond boundary edges and the right equality is a consequence of the fact that any boundary edge that is not Ramond has exactly one legal half-edge of twist is less than $r-2,$ by the assumption on $\Lambda$.  
\end{proof}

A na\"ive attempt to define the positivity boundary conditions would have been to require that, for all $C\in\partial^+\oCMr$, the evaluation of $w_C$ near each illegal boundary half-node of positive twist is positive. While this idea motivates our definition, the next example shows why it fails, and why we need to consider $\oPMr.$ 

\begin{ex}\label{ex:perm is needed}

Figure \ref{fig:ex} shows a nodal disk made of four disk components meeting at nodes. The component $A$ contains three illegal boundary marked points with the same positive twist. It should be viewed as arising in a partial normalization of a nodal graded $r$-spin disk, where the labeled illegal points are half-nodes.  Suppose that the nodes of $B$, $C$, and $D$ are Neveu--Schwarz, that their legal sides are in $A$, and have twist $a<r-2$. Choose internal markings with twists so that the ranks of the Witten bundles on $A$, $B$, $C$, $D,$ are $0$, $1$, $1$, $1$, and so that the internal twists for the disks $B$, $C$, $D$ are the same.

The na\"ive boundary conditions on $\partial^+\oCMr$ require the evaluations at the illegal half-nodes of $B$, $C$, and $D$ to be positive, and, after smoothing the nodes of $B$, $C$, and $D$, the evaluations at the illegal marked points of $A$ also need to be positive.  But then, suppose we smooth the node where $D$ meets $A$. The Witten bundle for the smoothed component has rank one and degree zero, and hence the restriction of the section to this component either vanishes nowhere or is $0$. If it is nonzero, then it is easy to see that it must evaluate negatively at one illegal point of the smoothed component; thus, the restriction of the section to the smoothed component must be $0$.  Using continuity this implies, in the nodal limit, that it must vanish on $D$.  Analogously, it must vanish on $B$ and $C$, as well. Thus, a section that is positive on $B$, $C$, and $D$ cannot be extended to a section that is positive, in the na\"ive sense, in a neighborhood.

\begin{figure}
 \centering
  \begin{tikzpicture}[scale=0.75]

  \draw (0,0) circle (1 cm);
  \draw (-1.4142136, 1.4142136) circle (1 cm);
  \draw (-1.4142136, -1.4142136) circle (1 cm);
  \draw (2,0) circle (1 cm);
  \node at (0,0) {$A$};
  \node at (-1.4142136, 1.4142136) {$B$};
  \node at (-1.4142136, -1.4142136) {$C$};
  \node at (2,0) {$D$};

  \filldraw[fill=white, draw=black] (-1,0) circle (0.1 cm);
  \filldraw[fill=white, draw=black] (0.382683,0.92388) circle (0.1 cm);
  \filldraw[fill=white, draw=black] (0.382683,-0.92388) circle (0.1 cm);

  \end{tikzpicture}
 \caption{A configuration where the na\"ive positivity fails.}
 \label{fig:ex}
\end{figure}
\end{ex}

The na\"ive approach fails since for nodal disks there may be ``fewer zeroes in hand" than in the smoothing.  We work in $\oPMr$ to avoid this problem.

\subsection{Examples of calculations}\label{subsec:calculations}

Here we present some examples of calculations of the open $r$-spin intersection numbers using our definition.

\begin{ex}\label{ex:tatb}
Consider the integral $\langle\tau^a\tau^{b}\rangle^{1/r,o}_0,$ with $b=r-a-1.$ In this case canonical sections extend to the boundary in a non-vanishing way, so we can work on  $\oCM_{0,0,\{a,b\}}^{1/r}$ rather than on its subset $\oPM_{0,0,\{a,b\}}$. 

One end of the interval $\oCM_{0,0,\{a,b\}}^{1/r}$ is the surface obtained by contracting the boundary of the disk, while the other end is the nodal surface composed of two disks, each with one internal marking, connected by a boundary node. Here $\text{rk}_\R(\cW)=1,$ and $\text{deg}(\cW)=0.$ Thus, for $C\in\CM_{0,0,\{a,b\}}^{\frac{1}{r}}$, any nonzero $s_{\Sigma}\in\cW_{C}$ is nowhere-vanishing on $\partial\Sigma \subseteq C$ when considered as a global section over $C$.

Take $s\in C^\infty(\cW\to\CM_{0,0,\{a,b\}}^{\frac{1}{r}})$ such that, for every $C\in\CM_{0,0,\{a,b\}}^{\frac{1}{r}}$, the restriction of $\left(s_{C}\right)|_{\d\Sigma}$ is positive.  Rescaling if necessary, $s$ can be extended to $\oCM_{0,0,\{a,b\}}^{\frac{1}{r}},$ and by continuity, it agrees with the grading at the endpoints of the moduli space.  Hence $s$ is a canonical section.  Since $s$ is nowhere-vanishing, we have
$\langle\tau^a\tau^b\rangle^{1/r,o}_0=0.$
\end{ex}

\begin{ex}\label{ex:t1s^2}
Consider $\langle\tau^1\sigma^2\rangle^{\frac{1}{r},o}_0$.  Again we extend the section to $\oCM_{0,2,\{1\}}^{1/r}$, which is a closed interval. Each endpoint is a nodal disk composed of two disks meeting at a node, in which one disk contains the boundary markings and a half-node $n_-$ and the other contains the internal marking and a half-node $n_+$; here, $n_-$ is the illegal half-node. The endpoints differ in the cyclic order of $x_1,x_2,n_-$. Call the surface with order $x_1,x_2,n_-$ $C_1,$ and call the other $C_2$.

Again, for $C\in\CM_{0,2,\{1\}}^{\frac{1}{r}},$ an element $s_{C}\in\cW_{\Sigma}\setminus\{0\}$ does not vanish on $\partial\Sigma \subseteq C$, so its direction agrees with the grading in exactly one of the two arcs $x_1\to x_2$ or $x_2\to x_1.$
If $s$ is canonical then at $n_-$, the section $s_{\Sigma_i}$ agrees with the grading. Thus, $s_{\Sigma_1}$ agrees with the grading along the arc $x_2\to x_1$ that contains $n_-$, and $s_{\Sigma_2}$ agrees with the grading along the other arc. Hence, a canonical section must vanish at some point of $\CM_{0,2,\{1\}}^{\frac{1}{r}},$ and we have $\langle\tau^1\sigma^2\rangle^{1/r,o}_0=\pm 1.$

To determine the sign, suppose $\CM_{0,2,\{1\}}^{\frac{1}{r}}$ is oriented from $C_1$ to $C_2$; in the notations of \cite[Section 3.3]{BCT1}, this orientation is  $\tilde{\mathfrak{o}}^{(1,2)}.$ Then $\cW\to\CM_{0,2,\{1\}}^{\frac{1}{r}}$ has a canonical relative orientation, given by the section:
\[\sigma= \left((-1)^r\frac{i(\bar{z_1}-z_1)dw}{(w-z_1)(w-\bar{z}_1)} (\frac{(x_{2}-x_{1})dw}{(w-x_{1})(w-x_{2})})^{r-2}  \right)^{\frac{1}{r}};\]
see \cite[Definition 5.10 and Construction/Notation 5.7]{BCT1}. $\sigma^{\otimes r}$ is positive with respect to the canonical orientation of $T^*\partial\Sigma$ when $w$ is in the arc $x_1\to x_2,$ and when $r$ is even, we take the real $r$th root that is positive with respect to the grading for $w$ in the arc $x_1\to x_2.$ By taking the limit $C\to C_i$, we see that canonical boundary conditions correspond to a negative vector in $\cW_{C_1}$ and to a positive one in $\cW_{C_2}.$ Therefore, the intersection number is $+1.$
\end{ex}

\begin{ex}\label{ex:s4s3}
When $r=2$, a dimension count shows that $\langle\sigma^3\rangle^{\frac{1}{2},o}_0=\pm 1.$ The definition of orientation then shows that $\langle\sigma^3\rangle^{\frac{1}{2},o}_0=-1$.  (Note that this is a different convention than the one used in \cite{PST14}.)
\end{ex}

\subsection{The case $r=2$}

In the case where $r=2$, our open $r$-spin intersection numbers are related to the intersection numbers on $\oCM_{0,k,l}$ defined in \cite{PST14}.  In particular, the relationship is as follows.

\begin{prop}\label{prop:r=2}
We have $\<\prod_{i\in [l]}\tau^0_{d_i}\sigma^k\>^{\frac{1}{2},o}_0=(-2)^{\frac{k-1}{2}}\<\prod_{i\in [l]}\tau_{d_i}\sigma^k\>^{o}_0$.
In other words, the Neveu--Schwarz sector of our theory agrees with the genus-zero part of the theory defined in \cite{PST14}.
\end{prop}
\begin{proof}
When $r=2$ and there are no Ramond markings, the Witten bundle is of rank zero. There is a map $f=\text{For}_{\text{spin}}:\oPM_{0,k,\{0\}_{i\in[l]}}\to\oCM_{0,k,l}$, which is generically one-to-one. Note that when $r=2$ and there are no Ramond insertions $\oPM_{0,k,\{0\}_{i\in[l]}}=\oCM_{0,k,\{0\}_{i\in[l]}}.$  Now, $\<\prod_{i\in [l]}\tau_{d_i}\sigma^k\>^{o}$ were defined using a notion of special canonical multisections, and a simple verification of definitions shows that, if $s$ is special canonical in the sense of \cite{PST14}, then $f^*s$ is canonical in the sense of this paper.\footnote{If $s$ is canonical in the sense of \cite{PST14}, it is not automatically the case that $f^*s$ is canonical in the sense of this paper, because of a difference in the choice of boundary labels for the base. In our case, a canonical multisection may have to be more symmetric with respect to renaming boundary markings.} Furthermore, Lemmas 3.53 and 3.56 of \cite{PST14} show that nowhere-vanishing special canonical boundary conditions $s$ exist, and $\<\prod_{i\in [l]}\tau_{d_i}\sigma^k\>^{o}_0 = 2^{-\frac{k-1}{2}}\int_{\oCM_{0,k,l}}e\left(\bigoplus_{i\in [l]}\CL_i^{\oplus d_i};s\right)$. On the other hand, Lemma~\ref{lem:existence} below implies $\<\prod_{i\in [l]}\tau^0_{d_i}\sigma^k\>^{\frac{1}{2},o}_0 = \int_{\oPM_{0,k,\{0\}_{i\in[l]}}}e\left(\bigoplus_{i\in [l]}\CL_i^{\oplus d_i};f^*s\right)$. Finally, the $(-1)^{\frac{k-1}{2}}$ comes from the slight difference in our choice of orientation in the current work compared to the choice in \cite{PST14}; see \cite[Remark 3.13]{BCT1}.
\end{proof}

\section{Open topological recursion relations}\label{sec:recursions}
The open $r$-spin intersection numbers satisfy topological recursion relations (TRRs) that allow all genus-zero correlators to be computed.  These relations involve closed $r$-spin correlators with (at most) one marked point of twist $-1$, which we studied in \cite{BCT_Closed_Extended}. As in the closed setting with only non-negative twists, also here $R^1\pi_*\mathcal{S}$ is a bundle in genus zero, so Witten's class $c_W$ can be defined by the formula~\eqref{eq:Witten's class}.  We define {\it closed extended $r$-spin correlators} by
\begin{gather*}
\<\tau^{a_1}_{d_1}\cdots\tau^{a_n}_{d_n}\>^{\frac{1}{r},\text{ext}}_0:=r\int_{\M^{1/r}_{0,\{a_1, \ldots, a_n\}}} \hspace{-1cm} c_W \cap \psi_1^{d_1} \cdots \psi_n^{d_n}.
\end{gather*}
%
%
\begin{thm}\label{thm:TRR}
\begin{itemize}
\item[(a)] (Boundary marked point TRR) Suppose $l,k\ge 1$.  Then
\begin{align*}
\< \tau_{d_1+1}^{a_1}\prod_{i=2}^l\tau^{a_i}_{d_i}\sigma^k\>^{\frac{1}{r},o}_0\hspace{-0.2cm}=&
\sum_{a=-1}^{r-2}\sum_{S \sqcup R = \{2,\ldots,l\}}\hspace{-0.1cm}\left\langle \tau_0^{a}\tau_{d_1}^{a_1}\prod_{i \in S}\tau_{d_i}^{a_i}\right\rangle^{\frac{1}{r},\text{ext}}_0
\hspace{-0.1cm}\left\langle \tau_0^{r-2-a}\prod_{i\in R}\tau^{a_i}_{d_i}\sigma^k\right\rangle^{\frac{1}{r},o}_0\\
&+\hspace{-0.1cm}\sum_{\substack{S \sqcup R = \{2,\ldots,l\} \\ k_1 + k_2 = k-1}} \hspace{-0.1cm}\binom{k-1}{k_1} \left\langle \tau^{a_1}_{d_1}\prod_{i \in S} \tau^{a_i}_{d_i}\sigma^{k_1}\right\rangle^{\frac{1}{r},o}_0 \hspace{-0.1cm}\left\langle \prod_{i \in R} \tau^{a_i}_{d_i} \sigma^{k_2+2}\right\rangle^{\frac{1}{r}, o}_0
\end{align*}
\item[(b)] (Internal marked point TRR) Suppose $l\ge 2$.  Then
\begin{align*}
\<\tau_{d_1+1}^{a_1}\prod_{i=2}^l\tau^{a_i}_{d_i}\sigma^k\>^{\frac{1}{r},o}_0\hspace{-0.2cm}=&
\sum_{a=-1}^{r-2}\sum_{S \sqcup R = \{3,\ldots,l\}}\hspace{-0.1cm}\left\langle \tau_0^{a}\tau_{d_1}^{a_1}\prod_{i \in S}\tau_{d_i}^{a_i}\right\rangle^{\frac{1}{r},\text{ext}}_0
\hspace{-0.1cm}\left\langle \tau_0^{r-2-a}\tau^{a_2}_{d_2}\prod_{i\in R}\tau^{a_i}_{d_i}\sigma^k\right\rangle^{\frac{1}{r},o}_0\\
&+\sum_{\substack{S \sqcup R = \{3,\ldots,l\} \\ k_1 + k_2 = k}} \hspace{-0.1cm}\binom{k}{k_1} \left\langle \tau^{a_1}_{d_1} \prod_{i \in S} \tau^{a_i}_{d_i}\sigma^{k_1}\right\rangle^{\frac{1}{r},o}_0 \hspace{-0.1cm}\left\langle \tau^{a_2}_{d_2}\prod_{i \in R} \tau^{a_i}_{d_i} \sigma^{k_2+1}\right\rangle^{\frac{1}{r}, o}_0
\end{align*}
\end{itemize}
\end{thm}
In Section \ref{subsec:special_can} we define canonical multisections with certain special properties, which are needed for the proof.
The proof itself appears in Section~\ref{subsec:pf_trr}.  
\subsection{Special canonical multisections}\label{subsec:special_can}
\subsubsection{Coherent multisections}\label{subsec:coherent}
We work with the notation of Appendix~\ref{sec:ap_euler} for multisections.  Consider a (possibly disconnected) graded stable $r$-spin dual graph $\Gamma.$
For any twisted $r$-spin structure, we set $J' := J \otimes \O\left(\sum_t r[z_t]\right)$, where the sum is over all anchors $t$ with $\text{tw}(t)=-1$ and $z_t$ is the special point specified by $t$.  Similarly, we set $\mathcal{J}' := \mathcal{J} \otimes \O \left( \sum_t r\Delta_{z_t}\right)$, where $\Delta_{z_t}$ is the divisor in the universal curve corresponding to $z_t$.
Define an orbifold vector bundle $\TRAM_{\Gamma}$ on $\M_{\Gamma}^{1/r}$ by
$\TRAM_{\Gamma} = \bigoplus_t \sigma_{z_t}^*\mathcal{J}'$,
where the sum is again over all anchors $t$ with $\text{tw}(t)= -1$ and $\sigma_{z_t}$ is the corresponding section of the universal curve.\footnote{In the special case that no anchor is twisted $-1$, we have $\TRAM_\Gamma=\oCM_\Gamma^{1/r}$.  In general, $\TRAM_\Gamma$ is the parameter space of nodal graded genus-zero surfaces together with an element $u_t\in J'_{z_t}$ for each anchor twisted $-1$.  In what follows (except in the current section), the notation $\TRAM_\Gamma$ will not be used for graphs that are not closed, to avoid confusion.}   We use the notation $\TRAM_{\Gamma}$ both for the bundle and for its total space, and we denote by
$\TAU: \TRAM_{\Gamma} \rightarrow \M_{\Gamma}^{1/r}$ the projection.  In case $\Gamma' \in \d\Gamma$, there is a map $\TRAM_{\Gamma'}\rightarrow \TRAM_\Gamma$ between total spaces, which is an embedding on the coarse underlying level.

We denote by $\cW'$ the bundle
$\cW' = (R^0\pi_*\mathcal{J}')_+$ on $\oCM_\Gamma$, and by $\cW$ the analogous bundle $(R^0\pi_*\mathcal{J}')_+$ on $\TRAM_\Gamma$. 
(For closed components of $\Gamma$ with no contracted boundary tails, whose corresponding moduli space consists of closed $r$-spin structures with no involution, $(R^0\pi_*\mathcal{J'})_+$ just denotes $R^0\pi_*\mathcal{J'}$.)

\begin{definition}
\label{def:coherent}
Let $\Gamma$ be a connected graded $r$-spin dual graph, and let $s$ be a section of $\mathcal{W}$ over a subset $U\subset\TRAM_{\Gamma}$. $s$ is {\it coherent} if either the anchor of $\Gamma$ is not twisted $-1$, or, for any point $\zeta=(C, u_t) \in U,$  corresponding to a graded $r$-spin disk $C$ and an element $u_t \in J'_{z_t},$ the element $s(\zeta) \in H^0(J')$ satisfies
$\ev_{z_t}s(\zeta) = u_t$. A coherent multisection $s$ is defined as a multisection each of whose local branches is coherent. For a disconnected $\Gamma,$ a (multi)section $s$ of $\cW$ over $U \subset \TRAM_{\Gamma}$ is coherent if it can be written as the restriction to $U$ of
$\underset{\Lambda \in \Conn(\Gamma)}{\boxplus}s_{\Lambda}$, where $\text{Conn}(\Gamma)$ is the set of connected components of $\Gamma$ and each~$s_\Lambda$ is a coherent (multi)section of $\mathcal{W}_\Lambda \rightarrow U_\Lambda$ for some $U_\Lambda\subseteq\TRAM_{\Lambda}.$
\end{definition}

Note that, if $s$ is a coherent multisection of $\mathcal{W} \rightarrow \TRAM_{\Gamma}$ and $\zeta\in\M_{\Gamma}^{1/r} \hookrightarrow \TRAM_{\Gamma}$, the evaluation $\ev_{z_t}(s(\zeta))$ is equal to zero whenever $z_t$ is an anchor with twist~$-1$, and thus $s(\zeta)$ is induced by a multisection of $J$.  In other words, the restriction of a coherent multisection $s$ to $\M_{\Gamma}^{1/r}$ is canonically identified with a multisection of $\cW \rightarrow \M_{\Gamma}^{1/r}$; we denote this induced section by $\overline{s}$ in what follows.  In case $\Gamma$ has no anchor of twist $-1$, we have $\overline{s} = s$.

Note, also, that adding a coherent multisection of $\cW \rightarrow \TRAM_{\Gamma}$ to a multisection of $\TAU^*(\cW\to\oCM_\Gamma^{1/r})$ yields another coherent multisection.

\subsubsection{An example of coherent sections}
\label{ex:simple_coherent}

Let $\Gamma$ be a connected closed graded $r$-spin dual graph such that the anchor, denoted $t$, has twist $-1$.  Then the fiber of $J'$ at the anchor can be identified with the trivial line $\C,$ canonically up to a global choice of a root of unity.  Let $\oCM_\Gamma^{1/r,\text{rigid}}$ be the degree-$r$ cover of $\oCM_\Gamma^{1/r}$ classifying pairs $(C,\rho)$, where $\rho:J'_{z_t}\xrightarrow{\sim}\C$ is one of the $r$ identifications. Denote by $\cW' \rightarrow \oCM_\Gamma^{1/r,\text{rigid}}$ the pullback of the Witten bundle. Similarly, define $\TRAM_\Gamma^{\text{rigid}}$ as the degree-$r$ cover of $\TRAM_\Gamma$ defined by adding an identification of the fiber at the anchor with $\C,$ and $\cW\to\TRAM_\Gamma^{\text{rigid}}$ the pullback of $\cW\to\TRAM_\Gamma$ to this cover. We again denote by $\TAU$ the projection $\TRAM_\Gamma^{\text{rigid}}\to\oCM_\Gamma^{1/r,\text{rigid}}.$

For $\Lambda\in\partial^!\Gamma$, let $E_*\subseteq E(\Lambda)$ be the minimal collection of Neveu--Schwarz edges such that, after detaching $E_*$, the connected component of the anchor has no Neveu--Schwarz edges. Write
$\text{detach}_{E_*}\Lambda=\Lambda_0\sqcup\Lambda_*$,
where $\Lambda_0$ is the connected component of the anchor and $\Lambda_*$ is the remaining graph.   The bundle $\cW'\to\oCM_{\Lambda}^{1/r,\text{rigid}}$ naturally decomposes as a direct sum $\cW'_{\Lambda_0}\boxplus\cW_{\Lambda_*}$ (up to the isotropy groups actions) by Item \eqref{it:NS} of Proposition \ref{pr:decomposition}.   Moreover, using Remark \ref{rmk:dec_stronger}, a section $s^{\Lambda_0}$ of $\cW'_{\Lambda_0}$ gives rise to a section $s^{\Lambda_0}\boxplus 0$ of $\cW'_\Lambda$.

Assume, that $\Gamma$ is smooth, and let $A_0(\Gamma)=\{\Lambda_0 \;| \;\Lambda\in\partial^!\Gamma\}$.  $\Gamma\in A_0(\Gamma)$ and, $A_0(\Xi)\subseteq A_0(\Gamma)$ for any $\Xi\in A_0(\Gamma)$.  We construct a family of sections
\[(s^\Xi\in\Gamma(\oCM_{\Xi}^{1/r,\text{rigid}},\cW'_{\Xi}))_{\Xi\in A_0(\Gamma)}\]
that evaluate to $1$ at the anchor of any moduli point, and that satisfy
\begin{equation}
\label{eq:comp_for_coherent}s^\Xi|_{\oCM_{\Lambda}^{1/r,\text{rigid}}} = s^{\Lambda_0}\boxplus 0\end{equation}
for any $\Lambda \in \d\Xi$ with a Neveu--Schwarz edge.

The construction is by induction on $\dim(\oCM_\Xi)$.  When $\dim(\oCM_\Xi)<0$, there is nothing to prove. Suppose we have constructed the requisite family for all~$\Xi'$ with $\dim(\oCM_{\Xi'}) < n$, and let $\Xi\in A_0(\Gamma)$ be such that $\dim(\oCM_\Xi)=n.$ Write
\[N=\bigsqcup_{\substack{\Lambda\in\partial\Xi,\\
\Lambda~\text{has a Neveu-Schwarz edge}}}\oCM_{\Lambda}^{1/r,\text{rigid}}.\] We define $s^\Xi|_N$ by \eqref{eq:comp_for_coherent}. This definition makes sense, since we have
\[(s^\Xi|_{\oCM_{\Lambda}^{1/r,\text{rigid}}})|_{\oCM_{\Lambda'_0}^{1/r,\text{rigid}}} = (s^{\Lambda_0}\boxplus 0)|_{\oCM_{\Lambda'_0}^{1/r,\text{rigid}}} = s^{\Lambda'_0}\boxplus 0=s^\Xi|_{\oCM_{\Lambda'}^{1/r,\text{rigid}}},\] whenever $\Lambda\in\partial\Xi,~\Lambda'\in\partial\Lambda$. For $(C,\rho)\in\oCM_{\Xi}^{1/r,\text{rigid}}\setminus N$, there exists an element $u_C\in\cW'_{(C,\rho)}$ with $\rho(\ev_{z_t}(u_{(C,\rho)}))=1$ by Lemma \ref{lem:surj_of_eval}; extend $u_{C}$ to a smooth section $s'_{(C,\rho)}\in\Gamma(U,\cW')$ in a neighborhood $U$ of $(C,\rho)$ not intersecting $N$.  For $(C,\rho)\in N$, define $s'_{(C,\rho)}$ as an extension of the already-defined $s^\Xi_{(C,\rho)}|_{N}$ to a small neighborhood $U$. In both cases, we can shrink $U$ so that $v(C',\rho):=\rho(\ev_{z_t}(s'_{(C,\rho)}(C',\rho)))\neq 0$ for $(C',\rho)\in U.$  Define $s_{(C,\rho)}=s'_{(C,\rho)}/v \in\Gamma(U,\cW')$. $s_{(C,\rho)}$ evaluates to $1$ at the anchor and extends $s^\Xi|_{U\cap N}.$

By compactness, we can cover $\oCM_\Xi^{1/r,\text{rigid}}$ by finitely many open sets $U_i$ with sections $s_i\in\Gamma(U_i,\cW')$ as above. Using a partition of unity $\{\rho_i\}$ subordinate to the cover $\{U_i\}$, we can construct $s^\Xi=\sum \rho_is_i\in\Gamma(\oCM_\Xi^{1/r,\text{rigid}},\cW')$ that evaluates to $1$ at anchors and satisfies \eqref{eq:comp_for_coherent}. The induction follows.

We now have a section $s^\Gamma$ that evaluates to $1$ at the anchor.
Define $u^{\text{rigid}}\in\Gamma(\TRAM_\Gamma^{\text{rigid}},\cW)$, for $(\lambda,C,\rho) \in \TRAM_\Gamma$ with $(C,\rho)\in\oCM_\Gamma^{1/r,\text{rigid}}$ and $\lambda\in(\mathcal{J}'_{C})_{z_t}$, by
\[u^{\text{rigid}}(\lambda,C,\rho)=\rho(\lambda)\TAU^*s^\Gamma(C,\rho).\]
Finally, define $u\in\Gamma(\TRAM_\Gamma,\cW)$ by the averaging
\[u(\lambda,C,\rho) = \frac{1}{r}\sum_{\rho:(J'_{\Sigma})_{z_t}\simeq\C}u^{\text{rigid}}(\lambda,C,\rho),~ C\in\oCM_\Gamma^{1/r},~\lambda\in(\mathcal{J}'_{C})_{z_t},\]
where we use the canonical identification between the fibers $\cW_{(\lambda,C)}$ and $\cW_{(\lambda,C,\rho)}$.  The section $u$ is coherent and vanishes precisely on $\oCM_\Gamma^{1/r}\hookrightarrow\TRAM_\Gamma.$

\subsubsection{The assembling operation $\Ass$}
Given a graded $r$-spin dual graph $\Gamma$ and a subset $E' \subset E^I(\Gamma)$, there is an ``assembling" procedure $\Ass$ by which a coherent multisection $s'$ of $\cW \rightarrow \TRAM_{\detach_{E'}(\Gamma)}$ is glued to give a coherent multisection $\Ass_{\Gamma,E'}(s')$ of $\cW \rightarrow \TRAM_{\Gamma}$; this is important in the proof of the TRRs and in the construction of boundary conditions.  We explain the procedure in the case where $\Gamma$ is connected, $E' = \{e\}$, and $s'$ is a coherent section, for simplicity of notation. The generalization to a set of edges is automatic.

Let $\widehat\Gamma:= \detach_{E'}(\Gamma)$, and let
$\text{Detach}_{E'}: \M_{\Gamma}^{1/r} \rightarrow \M_{\widehat\Gamma}^{1/r}$ be the map $q\circ \mu^{-1}$, in the notation of \eqref{eq:Wittendecompsequence}.  This map has degree one, and if $e$ is Neveu--Schwarz, there is an analogous map
$\text{Detach}_{E'}: \TRAM_{\Gamma}\rightarrow \TRAM_{\widehat\Gamma}$,
also of degree one.  Proposition \ref{pr:decomposition} extends to the Witten bundles over $\TRAM_\Gamma$ and $\TRAM_{\widehat\Gamma}$; in particular, item \eqref{it:NS} of this proposition shows that, up to the automorphism group actions, $\text{Detach}_{E'}^*\widehat\cW$ and $\cW$ are naturally isomorphic.  Moreover, Remark \ref{rmk:dec_stronger} extends as well and allows us to identify multisections of $\text{Detach}_{E'}^*\widehat\cW$ as multisection of $\cW$.  With this identification, we set
\[\Ass_{\Gamma,E'}(s') = \text{Detach}_{E'}^*(s')=\text{Detach}_{E'}^*(s'_1\boxplus s'_2),\]
where $s'_1,s'_2$ are the coherent multisections for the connected components of $\widehat\Gamma$.

If $e$ is Ramond, then  the fact that $\Gamma$ is anchored implies that exactly one of its half-edges has twist $-1$ after detaching.  Denote this half-edge by $h_2$, and let $\Gamma_2$ be its connected component in $\widehat\Gamma$; let $h_1$ be the other half-edge of $e'$, with connected component $\Gamma_1$.
Choose an identification $\rho:J'_{z_{h_1}}\to J'_{z_{h_2}},$ between the fibers of $J'$ at the half nodes $z_{h_i}$; there are $r$ possible identifications, which differ by $r$th roots of unity from one another.  By definition, a point in $\TRAM_{\widehat\Gamma}$ consists of $(\xi_1, \xi_2, \nu)$, where $\xi_1 \in \TRAM_{\Gamma_1}$, $\xi_2 \in \M_{\Gamma_2}^{1/r}$, and $\nu \in J'_{z_{h_2}}$.  A coherent multisection $s'$ associates to each such tuple an element
$s'(\xi_1, \xi_2, \nu) = s_1' \boxtimes s_2' \in H^0(J'),$ such that  $\ev_{z_{h_2}}s_2' = \nu,$ and $s'_1~\text{is coherent}.$
  We write $s_1' = s_1'(\xi_1)$ and $s_2' = s_2'(\xi_2, \nu)$, and consider\footnote{When, as in \eqref{eq:assemble}, we substitute a multisection $t_1$ into another multisection $t_2$, the result is the multisection whose local branches are $t^i_2(t^j_1)$, where $(t^i_2,t^j_1)$ are all possible pairs of local branches.  The weight (see Definition \ref{def:multisection}) of the local branch $t^i_2(t^j_1)$ is $\mu_2^i\mu_1^j$, where $\mu^*_*$ denotes the weight of the local branch $t_*^*.$}
\begin{equation}
\label{eq:assemble}
s'_1(\xi_1)\boxtimes s'_2(\xi_2,\rho(\ev_{z_1}(s'_1(\xi_1)))).
\end{equation}
This is a multisection of a bundle $J_1' \boxtimes J_2'$ on a disconnected curve together with an identification $\rho$ of the fibers of $J_1'$ and $J_2'$ over the half-nodes $z_{h_i}$, so it gives rise to a function on the fiber of $(q')^*\widehat{\cW}$.  Moreover, the multisection~\eqref{eq:assemble} is constructed so that its values on the two-half nodes agree under $\rho$, or in the language of Proposition~\ref{pr:decomposition}, its image in $\mathcal{T}$ vanishes.  It therefore induces a function on the fiber of $\mu^*i_{\Gamma}^*\cW$, which is a function on $H^0(J')$ on the connected curve; the induced multisection of $\cW\to\TRAM_\Gamma$ is $\Ass_{\Gamma,E}(s)(\zeta_1,\zeta_2)$. It is easy to see that $\Ass_{\Gamma,E}(s)$ is coherent and independent of the choice of $\rho.$

\begin{obs}\label{obs:comp_of_Ass}
Let $\Gamma_1$ be a graded $r$-spin dual graph. Suppose that $\Gamma_2$ is obtained from $\Gamma_1$ by detaching edges $E_1$, and $\Gamma_3$ is obtained from $\Gamma_2$ by detaching edges $E_2$.  If $s$ is a coherent multisection of $\cW \rightarrow \TRAM_{\Gamma_3}$, then
\[\Ass_{\Gamma_1,E_1}(\Ass_{\Gamma_2,E_2}(s)) = \Ass_{\Gamma_1,E_1\cup E_2}(s).\]
\end{obs}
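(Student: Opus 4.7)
\textbf{Proof proposal for Observation \ref{obs:comp_of_Ass}.}
The plan is to reduce to the single-edge case and then exploit the naturality of the construction. Note that the definition of $\Ass_{\Gamma,E}$ for a set $E$ of edges is by iteration of the single-edge procedure, and this iteration is manifestly independent of the order in which the edges of $E$ are processed, since the maps $\text{Detach}_{e}$ and the identifications $\rho_{e}$ associated to distinct edges act on disjoint pieces of data. Consequently, it suffices to treat the case $|E_1| = |E_2| = 1$, say $E_1 = \{e_1\}$ and $E_2 = \{e_2\}$, and moreover we may assume $\Gamma_1$ is connected, since coherent multisections on a disconnected graph are by definition box-sums of coherent multisections on the connected components.

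If both $e_1$ and $e_2$ are Neveu--Schwarz, the argument is essentially formal. In this case $\Ass_{\Gamma,\{e\}}$ is simply the pullback via the degree-one map $\text{Detach}_e : \TRAM_\Gamma \to \TRAM_{\widehat\Gamma}$ together with the canonical identification $\text{Detach}_e^* \widehat{\cW} \cong \cW$ supplied by Proposition \ref{pr:decomposition}\eqref{it:NS} and Remark \ref{rmk:dec_stronger}. Since $\text{Detach}_{e_1}$ and $\text{Detach}_{e_2}$ commute (up to the natural isomorphisms of $\TRAM$-spaces) and pullbacks compose, the equality $\Ass_{\Gamma_1,\{e_1\}} \circ \Ass_{\Gamma_2,\{e_2\}} = \Ass_{\Gamma_1,\{e_1,e_2\}}$ is immediate.

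If one or both of $e_1,e_2$ is Ramond, the content is no longer tautological, because the Ramond assembling formula \eqref{eq:assemble} involves substituting $\nu = \rho(\ev_{z_{h_1}} s'_1)$ into the component $s'_2$ on the non-anchored side. The plan is to unwind the two sides of the desired equality using \eqref{eq:assemble} and check they coincide. Let $\Gamma_3 = \bigsqcup_j \Gamma_3^{(j)}$, with a coherent multisection $s = \boxplus_j s^{(j)}$. Detaching $e_1$ and $e_2$ selects, for each Ramond edge, the unique half-node of twist $-1$ and hence the unique connected component of $\Gamma_3$ playing the role of $\Gamma_2$ in \eqref{eq:assemble}. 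One then treats three subcases:\ (a) $e_1$ and $e_2$ lie in different connected components of $\Gamma_2$, in which case the two substitutions act on independent factors and trivially commute;\ (b) $e_1$ and $e_2$ share a connected component of $\Gamma_2$ but after detaching both they land in distinct components of $\Gamma_3$, in which case a direct application of \eqref{eq:assemble} twice — iterated in either order — produces the same nested substitution, since coherence of each $s^{(j)}$ means that the evaluation $\rho(\ev_{z_h} s^{(j)})$ depends only on $s^{(j)}$ and not on any $\nu$ being plugged in elsewhere;\ (c) mixed Neveu--Schwarz/Ramond, which combines the fiberwise pullback of the NS case with the substitution of the Ramond case, and these two operations act on orthogonal pieces of the data and so commute.

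The main obstacle is the bookkeeping in case (b): one must verify that the identifications $\rho_1$ and $\rho_2$ chosen at $e_1$ and $e_2$ can be chosen independently, that the coherence condition propagates correctly through the sequence of substitutions, and that the final multisection on $\TRAM_{\Gamma_1}$ is independent of these auxiliary choices (as in the single-edge case). Once this is checked, the equality $\Ass_{\Gamma_1,\{e_1\}}(\Ass_{\Gamma_2,\{e_2\}}(s)) = \Ass_{\Gamma_1,\{e_1,e_2\}}(s)$ follows, and induction on $|E_1| + |E_2|$ yields the general statement.
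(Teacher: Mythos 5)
The paper states this result as an unproved Observation, so there is no author proof to compare against; I will instead assess whether your argument actually closes the claim.

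Your proposal correctly identifies the content: the NS case is a tautology (composition of pullbacks along commuting degree-one maps $\text{Detach}_e$, using Proposition \ref{pr:decomposition}\eqref{it:NS} and Remark \ref{rmk:dec_stronger}), and the real work is in the Ramond case, where \eqref{eq:assemble} involves a genuine substitution of the evaluation $\rho(\ev_{z_{h_1}}s'_1)$ into the non-anchored factor. Your three subcases (a)–(c) are the right decomposition, and the remark that a coherent multisection's evaluation at any point is a function only of the anchor datum is exactly the reason the nested substitutions commute.

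However, there is a logical tension you should resolve. Your opening paragraph asserts that the iteration defining $\Ass_{\Gamma,E}$ is ``manifestly independent of the order,'' and uses that to reduce to $|E_1|=|E_2|=1$. But if that claim were manifest, the whole observation would already follow — $\Ass_{\Gamma_1,E_1}\circ\Ass_{\Gamma_2,E_2}$ is by definition an iteration over $E_1$ after an iteration over $E_2$, i.e.\ some ordering of $E_1\cup E_2$ — and yet you then say, correctly, that the Ramond case ``is no longer tautological.'' What makes order-independence non-obvious for Ramond edges is exactly the nested substitution: if $e_1$ is ``between'' the anchor and $e_2$ in the tree, then assembling $e_2$ before $e_1$ (the order your $\Ass_{\Gamma_1,E_1}\circ\Ass_{\Gamma_2,E_2}$ imposes) produces a composite substitution, while assembling $e_1$ first feeds a genuine numerical value into the $e_2$-side. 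That these agree is a consequence of coherence — the $e_2$-side section is a \emph{function} of its anchor datum, so composing substitutions symbolically and then evaluating gives the same answer as evaluating stepwise — but this needs to be stated as the argument, not assumed at the outset.

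Concretely: delete the ``manifestly independent of order'' sentence, and instead do an induction on the partial order on $E_1\cup E_2$ induced by the anchored tree structure of $\Gamma_1$ (each edge has a unique parent, namely the next edge on the path toward its component's anchor). At each step, coherence of the factor attached at the child edge's $-1$-twisted anchor means its evaluation at any further boundary point is a well-defined function of the anchor input, and so plugging in at the parent edge commutes with plugging in at the child. This is precisely your ``depends only on $s^{(j)}$'' remark, but promoted from a parenthetical in case (b) to the engine of the whole argument. The independence of the choices of $\rho$ at each edge, which you flag as needing verification, is already established in the paper for a single edge, and since distinct edges involve identifications at disjoint pairs of half-nodes, nothing new arises; you can simply cite this. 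With that restructuring, the proof is complete and correct.
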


\subsubsection{Special canonical multisections and transversality}
\begin{definition}\label{def:abs_vertex}
Let $\mathcal{V}(\Gamma)$ be the collection of one-vertex dual graphs that may appear as connected components of
$\text{for}_{\text{marking}}( \text{for}_{\text{illegal}}(\detach_{E(\Lambda)}(\Lambda)))$ for some $\Lambda\in\partial^!\Gamma\setminus\partial^+\Gamma,$
where $\text{for}_{\text{marking}}$ is the map that changes all boundary markings to zero.  We call the elements of $\mathcal{V}(\Gamma)$ \emph{abstract vertices}.
For $i \neq 0$, let $\mathcal{V}^i(\Gamma)\subset\mathcal{V}(\Gamma)$ be the collection of abstract vertices that have an internal tail labeled $i$.  For an internal marking $i\neq 0$ and a graph $\Gamma$ with $i\in I(\Gamma),$
denote by $v_i^*(\Gamma)$ the (single-vertex) connected component of
$\text{for}_{\text{marking}}( \text{for}_{\text{illegal}}(\detach_{E(\Gamma)}(\Gamma))) $
that contains $i$ as an internal marking.

Denote by $\Phi_{\Gamma,v}:\oCM_{\Gamma}^{1/r}\to\oCM_{v}^{1/r}$ the natural map.  Namely, if $\text{For}_{\text{marking}}$ is the map (on the moduli level) that changes all boundary markings to zero and $\text{For}_{\text{illegal}}$ is the map that forgets all illegal, twist-zero boundary marked points with marking zero, then $\Phi_{\Gamma,v}$ is the composition of
$\text{For}_{\text{illegal}}\circ \text{For}_{\text{marking}}\circ \text{\Detach}_{E(\Gamma)}$
with the projection to $\oCM_{v}^{1/r}$.  Write $\Phi_{\Gamma,i}$ for $\Phi_{\Gamma,v_i^*(\Gamma)}.$
\end{definition}
Analogously to Observation \ref{obs:Li_pulled_pre} we have the following:
\begin{obs}\label{obs:CL_i pulled}
The map $\Phi_{\Gamma,i}|_{\oCM_{\Gamma}^{1/r}}$ factors through $F_{\Gamma}.$
If $v_i^*(\CB\Gamma)$ is stable, then $\CL_i\to\oCM_\Gamma^{1/r}$ is canonically identified with $\Phi_{\Gamma,i}^*(\CL_i\to \oCM_{v^*_i(\Gamma)}^{1/r})$.
\end{obs}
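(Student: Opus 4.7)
The plan is to factor the map $\Phi_{\Gamma,i}$ as $\Phi_{\CB\Gamma,i}\circ F_{\Gamma}$ and then invoke Observation~\ref{obs:Li_pulled_pre} together with the pullback behavior of $\CL_i$ under the detaching and forgetful operations appearing in $\Phi_{\CB\Gamma,i}$. First, I would verify the factoring by direct comparison of the definitions: $F_{\Gamma}$ detaches only the edges in $E^0(\Gamma)$ (boundary edges with one illegal twist-zero side) and forgets the resulting illegal twist-zero tails, while $\Phi_{\Gamma,i}$ detaches \emph{all} edges, forgets all boundary markings, forgets all illegal twist-zero tails, and projects onto the component containing the internal tail $i$. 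Since detaching a subset of edges and then the rest is the same as detaching them all at once, and since $\text{for}_{\text{illegal}}$ commutes both with further detaching and with $\text{for}_{\text{marking}}$, the composite $\Phi_{\CB\Gamma,i}\circ F_{\Gamma}$ agrees with $\Phi_{\Gamma,i}$. This comparison also yields the combinatorial identity $v_i^*(\CB\Gamma)=v_i^*(\Gamma)$, so the targets of the two maps coincide; this establishes the first assertion of the observation.

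For the second assertion, I would proceed in two stages along this factoring. The hypothesis that $v_i^*(\CB\Gamma)$ is stable means in particular that the connected component of $\CB\Gamma$ containing $i$ is stable, so Observation~\ref{obs:Li_pulled_pre} identifies $\CL_i\to\oCM_\Gamma^{1/r}$ canonically with $F_\Gamma^*\CL_i$ on $\oCM_{\CB\Gamma}^{1/r}$. It then suffices to identify this $\CL_i$ on $\oCM_{\CB\Gamma}^{1/r}$ with $\Phi_{\CB\Gamma,i}^*\CL_i$ from $\oCM_{v_i^*(\Gamma)}^{1/r}$. To do this, I would use that $\CL_i$ is by definition the pullback, under $\text{For}_{\text{spin}}$ followed by the doubling map, of the cotangent line bundle at $z_i$ on the space of stable marked disks, and that this cotangent line depends only on the irreducible component of the curve containing $z_i$. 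Consequently it is preserved by detaching edges (which only restricts attention to that component), by forgetting markings on other tails, and by forgetting illegal twist-zero boundary tails, as long as the component of $z_i$ is never destabilized; combining these identifications will give the desired canonical isomorphism.

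The main obstacle, as in the proof of Observation~\ref{obs:Li_pulled_pre}, is the bookkeeping required to ensure that none of the intermediate graph operations appearing in $\Phi_{\CB\Gamma,i}$ destabilizes or contracts the component carrying $z_i$. This is precisely ruled out by the hypothesis that $v_i^*(\CB\Gamma)$ is stable, together with the structure of $\CB\Gamma$: all boundary edges with one illegal twist-zero half-edge have already been detached and those tails forgotten in passing from $\Gamma$ to $\CB\Gamma$, so the exceptional situation described after Observation~\ref{obs:Li_pulled_pre} cannot recur. Once these combinatorial points are verified, the bundle-level pullback identification is immediate from the analogous identification on the underlying moduli space of stable marked disks.
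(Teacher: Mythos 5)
The paper states this observation without an explicit argument, remarking only that it follows ``analogously to Observation~\ref{obs:Li_pulled_pre}''; your proof correctly supplies what is left implicit there. Factoring $\Phi_{\Gamma,i}$ through $F_\Gamma$ (which is precisely the first assertion being proved), then invoking Observation~\ref{obs:Li_pulled_pre} for the $F_\Gamma$ leg and directly tracking that $\CL_i$ is preserved along the maps comprising $\Phi_{\CB\Gamma,i}$ --- with the stability of $v_i^*(\CB\Gamma)$ ruling out the destabilization that would otherwise spoil the identification --- is exactly the intended argument.
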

\begin{definition}\label{def:special_canonical_for_L_i}
A canonical multisection $s\in C_m^\infty(\oCM^{1/r}_\Gamma,\CL_i)$ is \emph{special canonical} if for every stable $v\in \mathcal{V}^i(\Gamma),$ there exists a canonical multisection $s^v$ of $\CL_i\to\oCM_{v}$ such that for every $\Lambda\in\d\Gamma$, we have
\begin{equation}\label{eq:special_canonical_L_i}
s|_{\CM_{\Lambda}^{1/r}}=\Phi_{\Lambda,i}^*s^{v_i^*(\Lambda)}.
\end{equation}
Write $\CS_i=\CS_i(\Gamma)$ for the vector space of special canonical multisections of $\CL_i.$
\end{definition}

\begin{definition}\label{def:special canonical for Witten}
A canonical multisection $s\in C_m^\infty(\oPM_\Gamma,\cW)$ is \emph{special canonical} if the for any abstract vertex $v\in \mathcal{V}(\Gamma)$, there is a coherent multisection $s^v\in C_m^\infty(\TRAM_v,\cW),$ and those multisections satisfy
\begin{enumerate}
\item if $v$ is closed and the anchor $t$ has twist $r-1$ then $\ev_t(s^v)$ is always positive with respect to the orientation of Definition \ref{def:stable_graded_rspin_disk}, Item (4);
\item for any $\Lambda\in \partial^!\Gamma\setminus\partial^+\Gamma$,
\begin{equation}\label{eq:special_canonical_Witten}s|_{\CM_\Lambda^{1/r}} =F_\Lambda^*(\Ass_{\CB\Lambda,E(\CB\Lambda)}((s^v)_{v\in \Conn(\detach(\CB\Lambda))}))\end{equation}
where $\Conn(-)$ denotes the set of connected components of a graph and $\text{detach}(-)$ denotes the detaching of a graph along all of its edges.
\end{enumerate}
\end{definition}

We consider $s^v$ for $v\in\mathcal{V}(\Gamma)$ or $v\in\mathcal{V}^i(\Gamma)$ as part of the information of the special canonical multisection.

Canonical multisections are positive multisections that are pulled back from their base moduli; special canonical multisections satisfy the more restrictive property that their $\cW_v$ component, for $v\in\mathcal{V}(\Gamma)$, is pulled back from the corresponding moduli $\CM_v^{1/r}$.  Because of cases (when $v$ is closed and the anchor is Ramond) in which $\cW$ does not decompose as a direct sum, we use coherent multisections and the map $\Ass$ instead of usual multisections and direct sums.

Since $s,s^v$ are global multisections, continuity considerations give:

\begin{obs}\label{obs:s_v_canonical}
Equations \eqref{eq:special_canonical_L_i},and \eqref{eq:special_canonical_Witten} hold also for $s|_{\oPM_\Lambda}$.
\end{obs}

\begin{rmk}
The requirement in the definitions of special canonical multisections that $s^v$ be canonical (and in the case of $\mathbb{L}_i$, the requirement that $s$ be canonical) is actually redundant. In fact, each $s^v$ for open $v$ is even special canonical. We do not need this fact, but we sketch the idea. 

Let $s$ be a special canonical multisection of the Witten bundle. If $v$ is an abstract vertex appearing in $\detach_{E(\Lambda)}(\Lambda)$, if $\Xi\in\partial v$, and if $\Lambda'\in\partial \Lambda$ is a the graph obtained from $\Lambda$ by replacing $v$ in $\Xi$ in the obvious way, then from \eqref{eq:special_canonical_Witten} applied to $\Lambda$ and extended to the boundary, we have
\[s|_{\CM_{\Lambda'}^{1/r}} =F_\Lambda^*(\Ass_{\CB\Lambda,E(\CB\Lambda)}((s^u|_{\CM_u^{1/r}})_{u\in \Conn(\detach(\CB\Lambda))\setminus\{v\}},s^v|_{\CM_\Xi^{1/r}})).\]
On the other hand, by applying directly to $\Lambda'$, we get
\[s|_{\CM_{\Lambda'}^{1/r}} =F_{\Lambda'}^*(\Ass_{\CB\Lambda',E(\CB\Lambda')}((s^u|_{\CM_u^{1/r}})_{u\in \Conn(\detach(\CB\Lambda'))})).\] Comparing the two equations using Observations \ref{obs:for_comp} and \ref{obs:comp_of_Ass} and the definition of $\Ass_*$, we see
\[s^v|_{\CM_\Xi^{1/r}} =F_\Lambda^*(\Ass_{\CB\Xi,E(\CB\Xi)}((s^u|_{\CM_u^{1/r}})_{u\in \Conn(\detach(\CB\Xi))})).\]
The positivity for $s^v$ is easily deduced from the positivity of $s$.  A similar argument works for special canonical multisections of $\CL_i.$
\end{rmk}

The following lemma, proven in Section \ref{subsec:constructions_bc_cons}, constructs special canonical multisections with strong transversality properties, used in the proof of TRRs.

\begin{lemma}\label{lem:existence}
Choose $k,l \geq 0$, and for each $i \in \{1,\ldots, l\}$, let $a_i \in \{1,\ldots, r-1\}$ be such that $\oPMr\neq \emptyset$.  Let $\Gammar$ denote the graded graph with a single vertex $v$, boundary tails marked by $\{1,\ldots, k\}$ and internal tails marked $\{1,\ldots, l\}$, such that the $i$th internal tail has twist $a_i$.  Then, for any  $d_1, \ldots, d_l \geq 0$, there exist global special canonical multisections
\[s\in C_m^\infty(\oPMr,\cW),~s_{ij}\in\CS_i,\; 1 \leq i \leq l, \; 1 \leq j \leq d_i,\] with the following transversality property:  For every vertex $v\in \mathcal{V}(\Gammar)$ and every
$
K\subseteq \bigcup_{i\in I(v)}\left\{i\right\}\times \{1, \ldots, d_i\},
$
one has
\[
\hat{s}^{v}\oplus\bigoplus_{ab\in K} s_{ab}^{v}\pitchfork 0,
\]
where $\hat{s}^v = s^v$ if $v$ is open and otherwise $\hat{s}^v = \bar{s}^v$ (which also equals $s^v$ if the anchor of $v$ is not twisted $-1$).
In particular, if
\begin{equation}\label{eq:open_rank_with_psi}
\frac{2\sum_{i=1}^l a_i+(k-1)(r-2)}{r}+2\sum_{i=1}^l d_i = 2l+k-3,\end{equation} then $s,s_{ij}$ can be chosen so that $\mathbf{s} = s\oplus\bigoplus_{i,j} s_{ij}$ vanishes nowhere on $\partial\oPMr.$
\end{lemma}

\subsection{Proof of Theorem \ref{thm:TRR}}\label{subsec:pf_trr}
The basic idea of the proof of Theorem~\ref{thm:TRR} is similar to the proof of Theorem 1.5 in \cite{PST14}: We first construct a global section $t$ of $\CL_1$ and show that its zero locus consists of internal strata $\PM_\Gamma$ with a single boundary node. We then show that the number of zeroes of the intersection problem we consider, when a canonical section $s_{11}$ of $\CL_1$ is replaced by $t,$ is the sum of products of open and closed contributions. We then use homotopy arguments to compare this zero count with the correlator. The homotopies contribute to the zero count; the contributions correspond to real codimension-$1$ boundaries of the moduli space. As we shall see, boundaries on which we have positivity constraints will not contribute. The contribution of the remaining boundaries will be products of two open intersection numbers for ``smaller" problems.
There are two main conceptual differences between our treatment and that of \cite{PST14}: the usage of positivity and the contribution from internal Ramond nodes.  (The latter is a surprising feature of the theory, and it is also the place where the closed extended $r$-spin theory enters the picture.)  There are also technical differences, related to orientations, multiplicities of zeroes, and the fact that we work on $\oPMr$ rather than on $\oCMr$.

We require the following two lemmas concerning homotopies. The first is proven in Section \ref{subsec:homotopies} below, the second is the noncompact orbifold analogue of \cite[Lemma 3.55]{PST14} and the proof is analogous as well.

\begin{lemma}\label{lem:partial_homotopy}
Let $E_1$ and $E_2$ be bundles on $\oCMr$ of the form
\[
E_j = \cW^{\varepsilon^j}\oplus\bigoplus_{i\in \left[l\right]}\CL_i^{\oplus d^j_i}\]
where $d_i^j \geq 0$ and $\varepsilon^j \in \{0,1\}$ satisfy $\varepsilon^1 + \varepsilon^2 = 1$. 
Write $E = E_1\oplus E_2,$ and 
assume that $\rk_{\R}(E) = k+2l-3$. Let $A\subseteq\partial^0\Gammar$ and set
$
C = \coprod_{\Gamma\in A} \CM_{\Gamma}^{1/r} \subseteq\partial^0\oCMr.
$
Let $\mathbf{s},~\mathbf{r}$ be multisections of $E|_{\partial^0\oPM_\Gamma\cup V_+}$, where
\begin{equation}\label{eq:V_+}V_+=\left(W\cap\oPMr\right)\cup \bigcup_{\Lambda~\text{\stronglypositive}}{\oPM_\Lambda}
\end{equation}
for some neighborhood $W$ of $\partial^+\oCMr$.  Assume that $\mathbf{s}$ and $\mathbf{r}$ satisfy
\begin{enumerate}
\item\label{it:can}
$\mathbf{s}|_{C\cup V_+}$ and $\mathbf{r}|_{C\cup V_+}$ are canonical, and
\item\label{it:id_and_trav}
the projections of $\mathbf{s}$ and $\mathbf{r}$ to $E_1$ are identical and transverse to $0$.
\end{enumerate}
Then there is a homotopy $H$ between $\mathbf{s}$ and $\mathbf{r}$, of the form
\begin{equation}\label{eq:form_of_hom}
H(p,t) = (1-t) \mathbf{s}(p) + t \mathbf{r}(p) + t(1-t) w(p),
\end{equation}
where $w(p)\in C_m^\infty(\partial^0\oPM_\Gamma\cup V_+,E)$ is canonical, such that
$H|_{\CM_\Lambda^{1/r}\times[0,1]}$ is transverse to $0$ for any stratum $\CM_\Lambda^{1/r},$ has constant-in-$t$ projection to $E_1$, and vanishes nowhere on $\left(C\cup V'_+\right)\times\left[0,1\right],$ where $V'_+\subseteq V_+$ is of the form \eqref{eq:V_+}.
\end{lemma}
%
%
\begin{lemma}\label{lem:zero diff as homotopy}
Let $E \to M$ be an orbifold vector bundle over an orbifold with corners, with $\text{rank}(E) = \dim(M)$, and let $s_0$ and $s_1$ be nowhere-vanishing smooth multisections. Denote by $p : [0,1] \times M\to M$ the projection, and, for any open set $U \subseteq M$ with $M \setminus U$ compact, consider the homotopy
\[
H \in C_m^\infty((U\cup \partial M)\times[0,1],p^*E),~~H(x,i) = s_i(x),~i=1,2.
\]
Suppose that $H \pitchfork 0$ and that $H(u,t) \neq 0$ for all $t \in [0,1]$ and $u \in U$.  Then
\[
\int_M e(E ; s_1) - \int_M e(E ; s_0) = \# Z(H)=\# Z(H|_{\partial M\times[0,1]}).
\]
\end{lemma}

\begin{proof}[Proof of Theorem \ref{thm:TRR}]
Define a section $\tildet \in C^\infty(\oPMr,\CL_1)$ as follows.  For $C\in\CMr$, identify the preferred half $\Sigma$ with the upper half-plane and set
\begin{equation}\label{eq:t}
\tildet\left(C\right) = dz\left.\left(\frac{1}{z - x_1}-\frac{1}{z-\bar{z}_1}\right)\right|_{z = z_1} \in T_{z_1}^*\Sigma = T_{z_1}^*C.
\end{equation}
This section, which is pulled back from the zero-dimensional moduli space $\oCM_{0,1,1}$, extends to a smooth global section $\tildet$ over $\oPMr.$  For a graded $r$-spin disk $C$ that is not necessarily smooth, set $\tildet(C)$ to be the evaluation at $z_1\in C$ of the unique meromorphic differential $\varphi_C$ on $C$ with simple poles at $\bar{z}_1$ and $x_1$ and at no other marked points or smooth points, such that the residue at $x_1$ is $1,$ at $\bar{z}_1$ is $-1$, and the residues at every pair of half-nodes sum to $0$.

Let ${U_{oc}} \subset \partial \Gammar$ be the collection of graphs $\Gamma$ consisting of an open vertex $v_{\Gamma}^o$ and a closed vertex $v_{\Gamma}^c$ joined by a unique edge $e$, where $1 \in I(v_{\Gamma}^c)$.  In what follows, we use the same notation $v^o_\Gamma$ (respectively, $v^c_\Gamma$) both for the vertex and for the corresponding graph that is a connected component of $v^o_{\Gamma}$ (respectively, $v^c_\Gamma$) in $\detach(\Gamma)$.  For $\Gamma \in {U_{oc}},$ let
$\text{Detach}_e:\oPM_\Gamma \to \oPM_{v^o_\Gamma} \times \oCM_{v^c_\Gamma}^{1/r}$
be the restriction to $\oPM_\Gamma$ of the composition $q \circ \mu^{-1}$, where $q$ and $\mu$ are again as in \eqref{eq:Wittendecompsequence}.  As above, we note that $\text{Detach}_e$ is generically one-to-one.

The $\oCM_{v^c}^{1/r}$ and its Witten bundle carry a canonical complex orientation.  Write $\tilde{\mathfrak{o}}_{v^c}$, $\mathfrak{o}_{v^c}$, and ${o}_{v^c}$ for the complex orientation of the Witten bundle on $\oCM_{v^c}^{1/r}$, the complex orientation of $\oCM_{v^c}^{1/r}$, and the complex relative orientation described in \cite[Theorem 5.2]{BCT1}, respectively.  To briefly describe the latter, let $\pi$ be an arbitrary order of the boundary markings, and let $\hat\pi$ be the induced cyclic order, which indexes a connected component $\oCM_{v^o}^{1/r,\hat\pi}$ of $\oCM_{v^o}^{1/r}$.  In \cite[Notation 3.14]{BCT1}, an orientation $\tilde{\mathfrak{o}}^\pi_{v^o_\Gamma}$ of $\oCM_{v^o}^{1/r,\hat\pi}$ is given, while in \cite[Definition 5.15]{BCT1}, a corresponding orientation $\mathfrak{o}^\pi_{v^o_\Gamma}$ of the Witten bundle over $\oCM_{v^o}^{1/r,\hat\pi}$ is defined. The canonical relative orientation ${o}_{v^o_\Gamma}$ is defined using them by \cite[Equation (5.8)]{BCT1}, and it is independent of the choice of $\pi$. 
Write $\mathfrak{o}^\pi_\Gamma=\text{Detach}_e^*(\mathfrak{o}_{v^c_\Gamma}\boxtimes \mathfrak{o}^\pi_{v^o_\Gamma})$ and $\tilde{\mathfrak{o}}_\Gamma^\pi=\text{Detach}_e^*(\tilde{\mathfrak{o}}_{v^c_\Gamma}\boxtimes \tilde{\mathfrak{o}}_{v^o_\Gamma}^\pi)$.  The first key lemma is the following.
\begin{lemma}\label{lem:zero_locus_tr_section}
The zero locus of $\tildet$ is $\bigcup_{\Gamma \in {U_{oc}}} \oPM_{\Gamma}.$ The multiplicity of $\oPM_\Gamma$ in the zero locus is $r$. Moreover, if we orient by $\tilde{\mathfrak{o}}^\pi_{0,k,\vec{a}}$ the connected component $\overline{\mathcal{PM}}^{\frac{1}{r}, \hat\pi}_{0,k,\vec{a}}$ of $\oPMr$ in which the boundary marked points have cyclic order $\hat\pi$, then $\tildet$ induces on its zero locus the orientation $\tilde{\mathfrak{o}}_\Gamma^\pi.$  Hence, if $\cW\to \oCM^{1/r}_{0,k\{a_i\}_{i\in [l]}}$ is canonically relatively oriented, then any $\cW\to\oPM_\Gamma$ appears in the zero locus 
with the relative orientation ${o}_\Gamma=\text{Detach}_e^*({o}_{v^c_\Gamma}\boxtimes {o}_{v^o_\Gamma}).$

\end{lemma}
\begin{proof}
The orientation of $\oCMr$ is defined as the pullback of the orientation $\tilde{\mathfrak{o}}^\pi_{0,k,l}$ of $\oCM_{0,k,l}^{\hat\pi}$ by $\text{For}_{\text{spin}}$. The section $\tildet$ is also pulled back from the same moduli space, and we use the same notation also for the section over $\oCM_{0,k,l}.$

In order to understand the vanishing locus setwise, both for the spin moduli and for the moduli without spin, note that, by definition, on a component of the closed curve $C$ containing $x_1$ or $\bar{z}_1,$ the differential $\varphi_C$ is nowhere-vanishing.  Similarly, $\varphi_C$ vanishes nowhere on components whose removal disconnects $x_1$ from $\bar{z}_1$. On other components, it vanishes identically. Thus, $\tildet$ vanishes exactly on stable disks $C$ such that the component containing $z_1$ is not on the path of components between the components of $\bar{z}_1$ and $x_1$. This happens exactly when $z_1$ belongs to a sphere component of $\Sigma$.  Thus, $\bigcup_{\Gamma \in {U_{oc}}} \oCM_{\Gamma}$ (or its spin analogue) is indeed the vanishing locus of $\tildet.$

Let $\text{for}_{\text{spin}}(\Gamma)$ be the stable graph obtained from $\Gamma\in U_{oc}$ by forgetting the extra structure, and let $\tilde{\mathfrak{o}}^{\hat\pi}_{\text{for}_{\text{spin}}(\Gamma)}$ be the orientation on
$\oCM^{\hat\pi}_{\text{for}_{\text{spin}}(\Gamma)} \cong \oCM_{\text{for}_{\text{spin}}(v^c_\Gamma)}\times \oCM^{\hat\pi}_{\text{for}_{\text{spin}}(v^o_\Gamma)},$
given as the product of the complex orientation with the canonical orientation of \cite{BCT1}.  We have $\tilde{\mathfrak{o}}^{\hat\pi}_{\Gamma}=\text{For}_{\text{spin}}^*\tilde{\mathfrak{o}}^{\hat\pi}_{\text{for}_{\text{spin}}(\Gamma)}$. The vanishing of the section $\tildet$ on $\oCM_{0,k,l}^{\hat\pi}$ at $\oCM_{\text{for}_{\text{spin}}(\Gamma)}^{\hat\pi}$ is transversal, and the induced orientation as (a part of) the zero locus is $\tilde{\mathfrak{o}}^{\hat\pi}_{\text{for}_{\text{spin}}(\Gamma)}$.  Indeed, the transversality is proven similarly to Lemma 3.43 of \cite{PST14}, while the orientation claim is easily verified in the two-dimensional case, and the general case follows by an inductive argument, identical to the one given in the proof of \cite[Lemma 3.15]{BCT1}.

Returning to the spin case, there is additional $r$-fold isotropy present in $\CM_{\Gamma}^{1/r}$ that was not present in the non-spin case, so the pulled-back section has order of vanishing $r$ in the orbifold sense.

As $\cW\to \oCM_{0,k, \vec{a}}^{1/r}$ is canonically relatively oriented, and we orient $\oCM_{0,k,  \vec{a}}^{\hat\pi}$ by $\tilde{\mathfrak{o}}_{0,k,\vec{a}}^\pi$, the definition of the canonical relative orientation implies that $\cW$ is oriented by $\mathfrak{o}_{0,k, \vec{a}}^\pi$.  Thus, by the definition of $\mathfrak{o}^\pi_{v^o_\Gamma}$,
\[\mathfrak{o}_{0,k, \vec{a}}^\pi\bigg|_{{\oCM_\Gamma^{1/r}}^{\hat\pi}} = \mathfrak{o}_{v^o}^\pi\boxtimes\mathfrak{o}_{v^c}.\]
Combining with the induced orientation of the zero locus, we see that the induced relative orientation is as claimed, and Lemma~\ref{lem:zero_locus_tr_section} is proved.
\end{proof}

Write $n=d_1$, and let
\[E=\cW\oplus \bigoplus_{i=1}^l \CL_i^{\oplus d_i}\rightarrow\oPMr,\quad E_1 =  \cW\oplus\CL_1^{\oplus n-1} \oplus \bigoplus_{i=2}^l \CL_i^{\oplus d_i} \to \oPMr.
\]
Take
$
\mathbf{s} = s\oplus\bigoplus_{\substack{ 1 \leq i \leq l\\ 1 \leq j \leq d_i}} s_{ij}
$,
with $s$ a global special canonical multisection of $\cW$ and $s_{ij}\in \CS_i$, so that $\mathbf{s}$ satisfies the refined transversality of Lemma \ref{lem:existence}. Thus,
by definition (and Theorem \ref{prop:euler_as_zero_locus}), the number of zeroes of $\s$ is
$\#Z(\s)=\left\langle\prod_{i=1}^l\tau_{d_i}^{a_i}\sigma^k\right\rangle^{\frac{1}{r},o}_0$. Put
$
\srest=s\oplus\bigoplus_{\substack{ 1 \leq i \leq l \\ 1 \leq j \leq d_i \\ (i,j) \neq (1,1)}} s_{ij},
$
and set $\ttts = \tildet\oplus\srest$.  Let $U_+=U\cap\partial^0\oCMr$ where $U$ is a neighborhood of $\partial^+\oCMr$ in which $s$ is nowhere vanishing; positivity implies that such a neighborhood exists.

The zero locus $Z\left(\tildet|_{\partial^0\oPMr}\right)$ consists of boundary strata of real codimension at least three in $\oPMr$. The refined transversality of Lemma \ref{lem:existence} guarantees that on such boundary strata, $\srest$ does not vanish. Thus, $\ttts$ does not vanish on $\partial^0 \oPMr$. 
By the refined transversality of $\tsrest$ again, its zero locus is transverse to $\oPM_\Gamma$ for $\Gamma \in U_{oc}$ and does not intersect $\partial\oPM_\Gamma$ for $\Gamma \in U_{oc}.$  Hence, by Lemma~\ref{lem:zero_locus_tr_section} and the definition of weighted cardinality in Notation \ref{nn:weighted_signed}, $\ttts$ has isolated zeroes and one has
\begin{equation}\label{eq:eulert_and_s}
\#Z(\ttts) = \# Z(\tildet \oplus \tsrest) = \sum_{\Gamma \in {U_{oc}}}r\#Z(\tsrest|_{\oPM_\Gamma}).
\end{equation}
Using Theorem \ref{prop:euler_as_zero_locus}, 
$\int_{\oPMr}e\left(E;\ttts|_{\partial^0\oPMr\cup U_+}\right)$
is defined and equals $\#Z(\ttts)$.

\begin{lem}\label{lem:closed_contribution}
\[
\#Z(\ttts) = \sum_{a=-1}^{r-2}\sum_{A \sqcup B = \{2,\ldots,l\}}\left\langle \tau_0^{a}\tau_{n-1}^{a_1}\prod_{i \in A}\tau_{d_i}^{a_i}\right\rangle^{\frac{1}{r},ext}_0
\left\langle \tau_0^{r-2-a}\prod_{i\in B}\tau^{a_i}_{d_i}\sigma^k\right\rangle^{\frac{1}{r},o}_0.
\]
\end{lem}
\begin{proof}
For any $\Gamma \in U_{oc}$, we have a degree-$1$ morphism
\[
 \text{Detach}_e: \oPM_\Gamma \longrightarrow \oPM_{{v_\Gamma^o}} \times \oCM_{{v_\Gamma^c}}^{1/r}.
\]
Let
$
{\text{Proj}}_o : \oPM_{{v_\Gamma^o}} \times \oCM_{{v_\Gamma^c}} \longrightarrow \oPM_{{v_\Gamma^o}}, $ and ${\text{Proj}}_c: \oPM_{{v_\Gamma^o}} \times \oCM_{{v_\Gamma^c}} \longrightarrow \oCM_{{v_\Gamma^c}}
$
be the projection maps.  Using the notation $\TRAM_{v}$ of Section~\ref{subsec:coherent}, we write
\[E^c_\Gamma = \cW^c\oplus\CL_1^{\oplus n-1}\oplus \bigoplus_{i \in I(v^c_\Gamma)\setminus \{1\}} \CL_i^{ \oplus d_i} \longrightarrow \TRAM_{{v_\Gamma^c}}, ~~E^o_\Gamma = \cW^o\oplus\bigoplus_{i \in I(v^o_\Gamma)} \CL_i^{\oplus d_i} \longrightarrow \oPM_{{v_\Gamma^o}},
\]
with $
\cW^c = \cW\to\TRAM_{v_\Gamma^c},~~\cW^o = \cW\to\oPM_{v_\Gamma^o},$
and where we abuse notation, in the definition of $E^c_\Gamma,$ by letting $\CL_i$ denote the pullback of the tautological line bundle $\CL_i\to\oCM_{v_\Gamma^c}^{1/r}$ to $\TRAM_{{v_\Gamma^c}}$ under the natural map $\TAU:\TRAM_{{v_\Gamma^c}}\to\oCM_{{v_\Gamma^c}}^{1/r}$.  Recall that if $e$ is Neveu--Schwarz, then $\TRAM_{{v_\Gamma^c}}=\oCM_{{v_\Gamma^c}}^{1/r}.$
By the definition of canonical multisections and Observation \ref{obs:s_v_canonical}, the $E_\Gamma^o$-component of $\s$
may be written as
\[({\text{Proj}}_o\circ \text{Detach}_e)^* \s_\Gamma^o,\quad\text{where}\quad\s_\Gamma^o=s^o\oplus\bigoplus_{i\in I(v^o),j\in[d_i]}s_{ij}^{v^o}
\]
for canonical $s_{ij}^{v^o}\in C_m^\infty(\oPM_{v^o},\CL_i)$ and $s^o\in C_m^\infty(\oPM_{v^o},\cW).$ Moreover, $\s_\Gamma^o$ is transverse, since it satisfies the refined transversality of Lemma \ref{lem:existence}.

Similarly, we have multisections
\[
\s_\Gamma^c=s^c\oplus\bigoplus_{\substack{ i\in I(v^c), \; 1 \leq j \leq d_i \\ (i,j)\neq(1,1)}}s_{ij}^{v^c}\in C_m^\infty(\TRAM_{v^c},E_\Gamma^c),
\]
where $\s_\Gamma^c$ is coherent and $\bar{\s}_\Gamma^c=\bar{s}^c\oplus\bigoplus s_{ij}^{v^c}$ is transverse to zero.  We can write
\[\tsrest|_{\oPM_\Gamma}=\Ass_{\Gamma,e}({s}_\Gamma^o,\s_\Gamma^c).
\]

Since $\s_\Gamma^o\pitchfork 0$, it does not vanish unless
$\rk(E_\Gamma^o)\leq \dim(\oPM_{v^o})$.
Write
\[{Z}^o = \text{Detach}_e^{-1}({\text{Proj}}^{-1}_o (Z(\s_\Gamma^o)))\subseteq \oPM_\Gamma,\]
the preimage in $\oPM_\Gamma$ of the zero locus of
$\s_\Gamma^o$.  In case $e$ is Neveu--Schwarz, $\s_\Gamma^c$ also does not vanish unless $\text{rank}(E_\Gamma^c) \leq \dim(\oPM_{v^c})$.  In case $e$ is Ramond, let $\bar{\s}_\Gamma^c$ be the restriction $\s_\Gamma^c|_{\oCM_{v_\Gamma^c}},$ which we identify with a multisection of
\[\bar{E}^c_\Gamma = \bar{\cW}^c\oplus\CL_1^{\oplus n-1}\oplus \bigoplus_{i \in I(v^c_\Gamma)\setminus \{1\}} \CL_i^{ \oplus d_i} \longrightarrow \oCM_{v_\Gamma^c}.\]
(We use the notation $\bar{\cW}^c$ to denote the Witten bundle over $\oCM_{v_\Gamma^c}$ in order to avoid confusion with the restriction to $\oCM_{v_\Gamma^c}$ of $\cW^c\to\TRAM_{v_\Gamma^c}$.)  Then,
\begin{equation}\label{eq:0_for_Ram}
\tsrest|_{{Z}^o}=
\Ass_{\Gamma,e}({\s}_\Gamma^o ,\s_\Gamma^c)|_{{Z}^o}
= ({\text{Proj}}_c\circ \text{Detach}_e)^*\bar{\s}_\Gamma^c,
\end{equation}
where the equation is interpreted using the injection of \eqref{eq:decompses2}.
By the transversality requirement of Lemma \ref{lem:existence}, $\bar{\s}_\Gamma^c$ vanishes nowhere unless $\rk(\bar{E}_\Gamma^c)\leq \dim(\oCM_{v_\Gamma^c})$. Thus, $e(E_1|_{\oPM_\Gamma};\s_1|_{\oPM_\Gamma})=0$ unless
\begin{equation}\label{eq:rks=dims}
\rk(E_\Gamma^o)= \dim(\oCM_{v_\Gamma^o}), \qquad\text{rank}(\bar{E}_\Gamma^c)=\dim\oCM_{v_\Gamma^c},
\end{equation}
independently of whether $e$ is Ramond. Assuming these equalities, we have
\begin{equation}\label{eq:1_for_Ram}
\# Z({\s}_\Gamma^o) = \sum_{p \in Z({\s}_\Gamma^o)} \eps_p,
\end{equation}
where $\eps_p$ is the weight of the zero $p$; see Notation \ref{nn:weighted_signed}.

In case $e$ is Ramond, using \eqref{eq:0_for_Ram}, we can write,
\begin{equation}\label{eq:2_for_Ram}
\#Z(\tsrest|_{\oPM_\Gamma})=
\sum_{p \in Z({\s}_\Gamma^o)}\eps_p \# Z \left(({\text{Proj}}_c\circ \text{Detach}_e)^*\bar{\s}_\Gamma^c|_{\text{Detach}_e^{-1}(({\text{Proj}}_o)^{-1}(p))}\right).
\end{equation}
Since $\left(\text{Proj}_c\circ\text{Detach}_e\right)\big|_{\text{Detach}_e^{-1}(({{\text{Proj}}}_o)^{-1}(p))}$ is a degree-$1$ map between smooth orbifolds $\text{Detach}_e^{-1}(({{\text{Proj}}}_o)^{-1}(p))$ and $\oCM_{v_\Gamma^c}^{1/r}$ (which even induces a diffeomorphism of the coarse spaces), we have
\begin{equation}\label{eq:3_for_Ram}
\# Z ((\text{Proj}_c\circ \text{Detach}_e)^*\bar{\s}_\Gamma^c|_{\text{Detach}_e^{-1}((\text{Proj}_o)^{-1}(p))})=\# Z(\bar{\s}_\Gamma^c),
\end{equation}
By transversality of $\bar{\s}_\Gamma^c$ and the interpretation of the Euler class of a bundle as the weighted zero count of a transversal multisection (Theorem \ref{prop:euler_as_zero_locus}), we have
\begin{align}\label{eq:4_for_Ram}
\# Z(\bar{\s}_\Gamma^c)=\int_{\oCM_{v_\Gamma^c}^{1/r}}e(\bar{E}^c_\Gamma) = \frac{1}{r}\left\langle \tau^{-1}_0\tau^{a_1}_{n-1}\prod_{\substack{i\in I(v^c_\Gamma)\\ i \neq 1}}\tau^{a_i}_{d_i}\right\rangle^{1/r,\text{ext}}_0.
\end{align}
Furthermore, since $\s_\Gamma^o$ is transverse and canonical, we have
\begin{align}\label{eq:5_for_Ram}
\# Z({\s}_\Gamma^o) = \left\langle \tau^{r-1}_0\prod_{\substack{i\in I(v^o_\Gamma)}}\tau^{a_i}_{d_i}\sigma^k\right\rangle^{1/r,o}_0.
\end{align}
Putting together equations \eqref{eq:1_for_Ram},~\eqref{eq:2_for_Ram},~\eqref{eq:3_for_Ram},~\eqref{eq:4_for_Ram}, and \eqref{eq:5_for_Ram}, and using \eqref{eq:eulert_and_s}, the Ramond case of the lemma is proven.

If $e$ is Neveu--Schwarz, we have the following analogue of equation \eqref{eq:0_for_Ram}:
\[\tsrest|_{{Z}^o}=({\text{Proj}}_c\circ \text{Detach}_e)^*\s_\Gamma^c\]
(more precisely, we use $({\text{Proj}}_c\circ \text{Detach}_e)^*\s_\Gamma^c$ to denote $\text{Detach}_e^*({\text{Proj}}_c^*\s_\Gamma^c\boxplus0)$).
From this it follows that
\begin{equation}\label{eq:2_for_NS}
\#Z(\tsrest|_{\oPM_\Gamma})=
\sum_{p \in Z({\s}_\Gamma^o)} \eps_p \# Z (({\text{Proj}}_c\circ \text{Detach}_e)^*\s_\Gamma^c|_{\text{Detach}_e^{-1}(({\text{Proj}}_o)^{-1}(p))}).
\end{equation}
Again the restricted map $\left(\text{Proj}_c\circ\text{Detach}_e\right)\big|_{\text{Detach}_e^{-1}(({{\text{Proj}}}_o)^{-1}(p))}$ is a degree-$1$ map between smooth orbifolds $\text{Detach}_e^{-1}(({{\text{Proj}}}_o)^{-1}(p)),~\oCM_{v_\Gamma^c}^{1/r},$ and thus
\begin{equation}\label{eq:3_for_NS}
\# Z (({\text{Proj}}_c\circ \text{Detach}_e)^*\s_\Gamma^c|_{\text{Detach}_e^{-1}((\text{Proj}_o)^{-1}(p))})=\# Z(\s_\Gamma^c),
\end{equation}
By transversality of $\s_\Gamma^c$ and $\s_\Gamma^o$, and the fact that $\s_\Gamma^o$ is canonical, we obtain
\begin{align*}
&\int\limits_{\oPM_{{v_\Gamma^o}}} \hspace{-0.1cm}e(E^o_\Gamma,\s^o_\Gamma) = \left\langle \tau^a_0\hspace{-0.1cm}\prod_{i\in I(v^o_\Gamma)}\hspace{-0.1cm}\tau^{a_i}_{d_i}\sigma^k\right\rangle^{\frac{1}{r},o}_0, \int\limits_{\oCM_{{v_\Gamma^c}}} \hspace{-0.1cm}e(E^c_\Gamma) =
\frac{1}{r}\left\langle \tau^b_0\tau_{n-1}^{a_1}\hspace{-0.1cm}\prod_{\substack{i\in I(v^c_\Gamma)\\ i \neq 1}}\hspace{-0.1cm}\tau^{a_i}_{d_i}\right\rangle^{\frac{1}{r},c}_0
\end{align*}
where $a$ is the twist of the half-edge of $e$ that lies in the open side and $b$ is the twist of the other half-edge.  These equations, together with~\eqref{eq:1_for_Ram},~\eqref{eq:2_for_NS},~\eqref{eq:3_for_NS}, and \eqref{eq:eulert_and_s}, prove the Neveu--Schwarz case, so Lemma~\ref{lem:closed_contribution} is proved.
\end{proof}We now analyze the difference between $\ttt$ and a canonical multisection.
\begin{lem}\label{lem:bc_contirbution}
We have
\begin{multline*}
\#Z(\s)-\#Z(\ttts)
=\hspace{-0.2cm} \sum_{\substack{A \sqcup B = \{2,\ldots,l\} \\ k_1 + k_2 = k-1}} \hspace{-0.1cm}\binom{k}{k_1} \left\langle \tau_{n-1}^{a_1} \prod_{i \in A} \tau^{a_i}_{d_i} \sigma^{k_1}\right\rangle^{\frac{1}{r},o}_0 \hspace{-0.1cm}\left\langle \prod_{i \in B} \tau^{a_i}_{d_i} \sigma^{k_2+2}\right\rangle^{\frac{1}{r}, o}_0.
\end{multline*}
\end{lem}
\begin{proof}
The proof of this lemma resembles the proof of Lemma 4.13 in \cite{PST14}, and for some of details we will refer the reader to the relevant places in \cite{PST14}.

Let ${U_{oo}} \subset \d^0\Gammar$ be the collection of graphs $\Gamma$ with exactly two vertices $v^\pm_\Gamma,$ both open, and a single edge $e_\Gamma,$ such that $1 \in I(v^-_\Gamma)$, $1 \in B(v^{+}_\Gamma)$, and such that if $h^\pm\in H^B(\Gamma)$ is the half-edge belonging to $v^\pm_\Gamma$, then $\alt(h^-)=\tw(h^-)=0$.  In particular, this implies that $\alt(h^+)=1$ and $\tw(h^+)=r-2$.  Let
$U_{\text{can}}$ be the collection of graphs in $\partial^0\Gammar\setminus U_{oo}$ which either have a contracted boundary half edge or a single edge which is a boundary edge.

$\ttt|_{\CM_\Gamma}$ is canonical for every $\Gamma \in U_{\text{can}}$.  This is proven exactly as \cite[Lemma 4.8]{PST14}, relying on the observation that, in this case, $t$ is pulled back from $\CM_{\CB\Gamma}$.

Let us now describe the difference in behavior between a canonical multisection of $\CL_1$ and $\ttt$ on $\oPM_\Gamma$ for $\Gamma \in U_{oo}$, which is responsible for the extra contribution to the topological recursion. Let $p \in \oPM_{\CB\Gamma}$ and let $F_p$ be the fiber over $p$ of the map $F_\Gamma : \oPM_\Gamma \to \oPM_{\CB\Gamma}$, equipped with its natural orientation. Generically, $F_p$ is the union of $a = k(v^-_\Gamma)$ closed intervals, each corresponding to a segment between marked points on which the illegal half-node can lie.
By \cite[Observation 4.9]{PST14}, we have:
\begin{obs}\label{obs:trivial_on_fiber}
The restriction line $\CL_i|_{F_p}$ is canonically trivialized.
\end{obs}

We shall thus think of sections of $\CL_i|_{F_p}$ as complex-valued functions, well-defined up to multiplication by a constant in $\C^*$.  By definition, we have:
\begin{obs}[c.f. Observation 4.10 of \cite{PST14}]\label{obs:canonical_const}
A canonical section of $\CL_i|_{F_p}$ is constant.
\end{obs}

The section $\ttt,$ on the other hand, winds non-trivially around $F_p$.   Indeed, for $i \in B(v^{-}_\Gamma),$ let $\Gamma_i$ be the unique graded graph in $\d^B\Gamma$ with three open vertices $v_i^0$ and $v_i^\pm$ and two boundary edges $e^\pm$, such that $B(v_i^0) = \{i\}.$
The boundary $\partial F_p$ corresponds to two stable disks modelled on the graphs $\Gamma_i$ for $i \in B(v^-_\Gamma),$ one for each cyclic order of $B(v^0_i).$ Let $\hat F_p$ be the quotient space of $F_p$ obtained by identifying, for each $i \in B(v^-_\Gamma)$, the two boundary points corresponding to $\Gamma_i.$  $\hat F_p$ is homeomorphic to $S^1.$ We have (\cite[Observations 4.11 and 4.12] {PST14}):
\begin{obs}\label{obs:cont_on_fiber}
The section $\ttt|_{F_p}$ descends to a continuous function $\theta_p : \hat F_p \to \C^*$.  Furthermore, the winding number of $\theta_p$ is $-1.$
\end{obs}

Returning to the proof of Lemma \ref{lem:bc_contirbution}, let
$E_2 = \CL_1 \to \oPMr$, so that $E = E_1 \oplus E_2.$ Since $t|_{\mathcal{M}_{\Gamma}}$ is canonical for every $\Gamma \in U_{\text{can}}$, we may apply Lemma~\ref{lem:partial_homotopy} to the multisections $\s$ and $\ttts,$ with the preceding choice of $E_1$ and $E_2$, $C=\coprod_{\Gamma\in U_{\text{can}}}\CM_\Gamma^{1/r}$ and $V_+=U_+.$  Thus, we may find a homotopy $H$ between $\mathbf{s}$ and $\ttts$ of the form~\eqref{eq:form_of_hom} such that
\begin{enumerate}[(i)]
\item the restriction of $H$ to each stratum $\CM_\Lambda^{1/r}\times[0,1]$ is transverse to zero;
\item $H$ is nowhere-vanishing on $\CM_\Gamma^{1/r} \times [0,1]$ for $\Gamma \in U_{\text{can}}$;
\item $H$ is nowhere-vanishing on $V'_+\times[0,1]$, where $V'_+$ is of the form \eqref{eq:V_+};
\item  the projection of $H$ to $E_1$ equals $\srest$ at all times.
\end{enumerate}
In particular, by transversality, $H$ does not vanish on $\CM_\Gamma \times [0,1]$ if $\dim\CM_\Gamma\leq\dim\oPMr-2$. Hence, by Lemma \ref{lem:zero diff as homotopy}, we can write
\begin{equation}\label{eq:rsH}
\#Z(\s)-\#Z(\ttts)= -\# Z(H) = -\sum_{\Gamma \in U_{oo}} \# Z\left(H|_{\oPM_\Gamma\times[0,1]}\right).
\end{equation}
Write $\pi : \partial \oPMr \times [0,1] \to \partial\oPMr$ for the projection to the first factor, and decompose $H = H_1 \oplus H_2,$ where $H_i \in C^\infty_m(\pi^* E_i).$ Then $H_1 = \pi^* \srest.$

Since $\srest$ is canonical, we can write $\srest|_{\oPM_{\Gamma}} = F^*_{\Gamma} \srest^{\CB\Gamma}$ for each $\Gamma\in\partial^0\Gammar$.  Transversality implies that $\srest^{\CB\Gamma}$ has isolated zeroes in $\CM_{\CB\Gamma}^{1/r}$ and
$
Z\left(\srest|_{\oPM_\Gamma}\right) \subset F_\Gamma^{-1}\left(\CM_{\CB\Gamma}^{1/r}\right).
$

Write $\# Z\left(\srest^{\CB\Gamma}\right) = \sum_{p \in Z\left(\srest^{\CB\Gamma}\right)} \eps_p$, where $\eps_p$ is the weight of the zero $p$ defined in Notation \ref{nn:weighted_signed}.  It follows from \cite[Theorem 5.2]{BCT1}
that $\# Z\left(H|_{\oPM_\Gamma\times[0,1]}\right)$ for $\Gamma \in U_{oo}$ equals
\begin{equation}\label{eq:HG}
\# Z\left(\pi^* F_\Gamma^* \srest^{\CB\Gamma}\right) \cap Z(H_2)
= \sum_{p \in Z\left(\srest^{\CB\Gamma}\right)}\eps_p\cdot \# Z\left(H_2|_{F_p \times [0,1]}\right).
\end{equation}
Since $H$ is of the form~\eqref{eq:form_of_hom}, we have, for some canonical multisection $w,$
\begin{equation}\label{eq:H2}
H_2(q,s) = t(q)s + s_{11}(q)(1-s) + s(1-s) w(q).
\end{equation}

Let $p \in Z(\srest^{\CB\Gamma}).$ Observations \ref{obs:trivial_on_fiber}, \ref{obs:canonical_const}, and~\ref{obs:cont_on_fiber} and equation~\eqref{eq:H2} imply that $H_2|_{F_p \times [0,1]}$ descends to a homotopy $\hat H_{2,p}$ on $\hat F_p \times [0,1]$ that can be viewed as taking values in $\C.$ As a consequence,
\begin{equation}\label{eq:HhH}
\#Z\left(H_2|_{F_p \times [0,1]}\right) = \#Z\left(\hat H_{2,p}\right).
\end{equation}
Now, $\hat H_{2,p}|_{\hat F_p \times \{0\}}$ is canonical and $\hat H_{2,p}|_{\hat F_p\times \{1\}} = \theta_p,$ so by Observations~\ref{obs:canonical_const} and~\ref{obs:cont_on_fiber}, we obtain $\#Z\left(\hat H_{2,p}\right) = -1.$
Combining the last equality, with equations \eqref{eq:rsH},~\eqref{eq:HG}, and~\eqref{eq:HhH}, we have
\begin{equation}\label{eq:sumU}
\#Z(\s)-\#Z(\ttts)= \sum_{\Gamma \in U_{oo}} \# Z(\srest^{\CB \Gamma}).
\end{equation}

In order to finish, we need to calculate $\# Z\left(\srest^{\CB \Gamma}\right)$ for $\Gamma \in U_{oo}.$ Denote by $v^\pm_{\CB\Gamma} \in V(\CB \Gamma)$ the vertices corresponding to $v^\pm_{\CB\Gamma}  \in V(\Gamma).$ We consider them both as vertices and as graphs with single vertex.
We have a canonical identification
\[
\oPM_{\CB\Gamma} \simeq \oPM_{v_{\CB\Gamma}^+} \times \oPM_{v_{\CB\Gamma}^-}.
\]

From now on write $\cW$ for $\cW\to\oPM_{\CB\Gamma},$ $\cW^\pm$ for $\cW\to\oPM_{v_{\CB\Gamma}^\pm},$ and
\begin{align*}
&E_{\CB\Gamma} = \cW\oplus\CL_1^{\oplus n-1} \oplus \bigoplus_{i = 2}^l \CL_i^{\oplus d_i} \longrightarrow \oPM_{\CB\Gamma},~E^-_{\CB\Gamma} = \cW^-\oplus\bigoplus_{i \in I(v^-_\Gamma)} \CL_i^{\oplus d_i} \longrightarrow \oPM_{v_{\CB\Gamma}^-},\\
&E^+_{\CB\Gamma} = \cW^+\oplus\CL_1^{\oplus n-1}\oplus\bigoplus_{i \in I(v^+_\Gamma)\setminus\{1\}}\CL_i^{\oplus d_i} \longrightarrow \oPM_{v_{\CB\Gamma}^+}.
\end{align*}
Let $
\text{Proj}_\pm : \oPM_{v_{\CB\Gamma}^+} \times \oPM_{v_{\CB\Gamma}^-} \longrightarrow \oPM_{v_{\CB\Gamma}^\pm}
$
denote the projections. We have
\[
E_{\CB\Gamma}=\text{Proj}_+^*E^+_{\CB\Gamma} \boxplus \text{Proj}_-^* E^-_{\CB\Gamma}.
\]

Since $\s_1$ is special canonical, and using Observation \ref{obs:s_v_canonical}, we can decompose
\begin{equation*}
\srest^{\CB\Gamma} = \text{Proj}_+^* \s_{\CB\Gamma}^+ \oplus \text{Proj}_-^* \s_{\CB\Gamma}^-,
\end{equation*}
where $\s^\pm_{\CB\Gamma}\in C_m^\infty(\oPM_{v_{\CB\Gamma}^\pm},E_{\CB\Gamma}^\pm)$ are canonical.
Since $\s$ was chosen to satisfy the refined transversality of Lemma \ref{lem:existence}, the multisections $\s^\pm_{\CB\Gamma}\pitchfork 0$. Thus,
\begin{align}\label{eq:3-before-last}
\# Z\left(\srest^{\CB\Gamma}\right) =& \# Z\left(\text{Proj}_+^* \s_{\CB \Gamma}^+\right) \cap Z\left(\text{Proj}_-^* \s_{\CB \Gamma}^-\right)=\\
=&\# \text{Proj}_+^{-1}\left(Z\left(\s_{\CB \Gamma}^+\right)\right) \cap \text{Proj}_-^{-1}\left(Z\left(\s_{\CB \Gamma}^-\right)\right).\notag
\end{align}

Now, $\# Z\left(\srest^{\CB\Gamma}\right)$ vanishes unless $\rk E_{\CB \Gamma}^\pm = \dim_\C \oCM_{\CB\Gamma}^\pm,$ by transversality and dimension counting. In case the ranks of the bundles $E_{\CB \Gamma}^\pm$ do agree with the dimensions of the corresponding moduli spaces, transversality implies that $\s_{\CB\Gamma}^\pm|_{\partial^0 \oCM_{\CB\Gamma}^\pm}$ is nowhere-vanishing. It follows that
\begin{equation}\label{eq:2-before-last}
\# \text{Proj}_+^{-1}\left(Z\left(\s_{\CB \Gamma}^+\right)\right) \cap \text{Proj}_-^{-1}\left(Z\left(\s_{\CB \Gamma}^-\right)\right)= \#Z\left(\s_{\CB \Gamma}^+\right)\#Z\left(\s_{\CB \Gamma}^-\right).
\end{equation}
Since $\s_{\CB\Gamma}^\pm$ is canonical and transverse,
\begin{equation}\label{eq:last}
\#Z\left(\s_{\CB\Gamma}^\pm\right)=\left\langle \prod_{i \in I(v_{\CB\Gamma}^\pm)} \tau_{a_i} \sigma^{k\left( v_{\CB\Gamma}^\pm\right)} \right \rangle_0^{\frac{1}{r},o}.
\end{equation}
For each $\Gamma \in U_{oo}$, $1 \in B(v^{+}_\Gamma)$, and $e^+$ is legal with twist $r-2.$ Thus, $k\left( v_{\CB\Gamma}^+\right) \geq 2$ for $\Gamma \in U_{oo}$. Equations~\eqref{eq:sumU},~\eqref{eq:3-before-last},~\eqref{eq:2-before-last},~\eqref{eq:last},
and the facts $k\left( v_{\CB\Gamma}^+\right) + k\left(v_{\CB\Gamma}^-\right) = k + 1$, $I(v_{\CB\Gamma}^+) \cup I(v_{\CB\Gamma}^-) = \{2,\ldots,l\}$ imply Lemma~\ref{lem:bc_contirbution}.
\end{proof}

The first item of Theorem \ref{thm:TRR} now follows from Lemmas \ref{lem:closed_contribution} and \ref{lem:bc_contirbution}. The proof of the second item of Theorem \ref{thm:TRR} is similar, except that $\varphi_C$ is defined to be the unique meromorphic differential on the normalization of $C$ with simple poles at $\bar{z}_1$ and $z_2$ and possibly at the nodes. The residues at $\bar{z}_1$ and $z_2$ are is $-1$ and $1$, respectively, and at any node, the sum of residues at the two half-nodes is zero.  The section $\tildet\left(C\right)$ is defined as the evaluation of $\varphi_C$ at $z_1$. The rest of the proof is exactly as for the first item. 
\end{proof}

\section{Open intersection numbers and the Gelfand-Dickey hierarchy}\label{section:open numbers and GD hierarchy}
In this section, we prove Theorems~\ref{thm:main},~\ref{theorem:primary numbers}, and~\ref{thm:open-closed}, and we derive open string and dilaton equations for the open $r$-spin intersection numbers.

\subsection{Primary extended closed $r$-spin intersection numbers}

As a preliminary step, let us derive an explicit formula for the correlators $\<\tau^{-1}\prod\tau^{\alpha_i}\>^{\frac{1}{r},\text{ext}}_0$.

\begin{prop}\label{prop:primary extended closed}
Let $n\ge 2$ and $0\le\alpha_1,\ldots,\alpha_n\le r-1$. Then we have
\begin{gather}
\<\tau^{-1}\prod_{i=1}^n\tau^{\alpha_i}\>^{\frac{1}{r},\text{ext}}_0=
\begin{cases}
\frac{(n-2)!}{(-r)^{n-2}},&\text{if $\frac{\sum\alpha_i-(r-1)}{r}=n-2$},\\
0,&\text{otherwise}.
\end{cases}
\end{gather}
\end{prop}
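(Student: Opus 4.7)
The natural approach is induction on $n$, using the topological recursion relations (TRRs) for the closed extended theory established in the companion paper \cite{BCT_Closed_Extended}.

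For the base case $n=2$, the twist-balance condition (the analogue of~\eqref{eq:close_rank1} with a $\tau^{-1}$ insertion), combined with $0 \le \alpha_i \le r-1$, forces $\alpha_1+\alpha_2 = r-1$; otherwise the relevant moduli space is empty. When it is non-empty, $\M^{1/r}_{0,\{-1,\alpha_1,\alpha_2\}}$ is a point with isotropy $\mu_r$, and a Riemann--Roch calculation gives $R^1\pi_*\mathcal{S} = 0$, so $c_W = 1$. The prefactor $r$ in the definition of the extended correlator cancels the orbifold degree $\tfrac{1}{r}$, yielding $1 = 0!/(-r)^0$, as required. The ``otherwise'' vanishing for general $n$ is automatic from a dimension count: whenever $(\sum\alpha_i-(r-1))/r \neq n-2$, the rank of $(R^1\pi_*\mathcal{S})^\vee$ fails to match $\dim\M^{1/r}_{0,n+1}=n-2$, and the top intersection pairing vanishes.

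For the inductive step with $n \ge 3$, I would apply the extended TRR to a correlator carrying a single $\psi$-class, say $\<\tau^{-1}\tau^{\alpha_1}_1\tau^{\alpha_2'}\prod_{i \ge 3}\tau^{\alpha_i}\>^{\frac{1}{r},\mathrm{ext}}_0$, with $\alpha_2'$ chosen so the dimension constraint continues to hold. The TRR expresses this correlator as a sum of products over splittings of the marked points at a separating node, each product consisting of one extended primary on one side and one (non-extended) closed primary on the other. The closed factor is the classical three-point value $\<\tau^a\tau^b\tau^c\>^{\frac{1}{r},c}_0 = r^{-1}\delta_{a+b+c,r-2}$, while the extended factor falls under the inductive hypothesis. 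Reading the same $\psi_1$-correlator through a string-type relation reducing $\psi_1$ to an insertion-dependent combinatorial factor then gives a linear equation that determines the primary number $\<\tau^{-1}\prod_{i=1}^n\tau^{\alpha_i}\>^{\frac{1}{r},\mathrm{ext}}_0$.

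The main obstacle is the combinatorial identity showing that the TRR sum collapses to $(n-2)!/(-r)^{n-2}$. This should follow from the telescoping recursion $a_n = -(n-2)\,a_{n-1}/r$, obtained after careful bookkeeping of the $r$-rescaling in~\eqref{eq:closed r-spin} and of the Witten-bundle decomposition along the separating node (Proposition~\ref{pr:decomposition}). Additional care is needed to isolate those contributions in which the $\tau^{-1}$ insertion sits on only one side of each splitting, so that the two factors consist of a well-defined extended and non-extended correlator, respectively.
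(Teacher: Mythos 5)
The general strategy you propose — lean on the closed extended TRRs from \cite{BCT_Closed_Extended} and induct — is indeed the direction the paper takes, but your sketch of the inductive step conceals where the real work lies, and one of your choices would prevent the induction from terminating.

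First, your induction variable is wrong. You propose to induct on $n$, the total number of non-$\tau^{-1}$ insertions. But in the recursion actually derived in \cite{BCT_Closed_Extended} (which the paper quotes as its equation~\eqref{eq:recursion for extended for many points}), each factor in the splitting picks up a number $k_I = r+1-\sum_{i\in I}(r-\alpha_i)$ of $\tau^{r-1}$ insertions, and $k_I + k_J = r+1+k$, which can be substantially larger than the original $k$. Concretely, a factor $A_I$ with $|I|=1$ already carries up to $r-1$ new $\tau^{r-1}$ insertions, so the total insertion count in a factor can \emph{exceed} $n$, and induction on $n$ does not terminate. The paper's resolution is a reformulation (its equation~\eqref{eq:primary extended closed,equiv}) that separates the twist-$(r-1)$ insertions as a power $(\tau^{r-1})^k$ and inducts on $l$, the number of insertions with twist in $\{0,\ldots,r-2\}$; this quantity strictly decreases under each splitting. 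Correspondingly, the base case is not $n=2$ but the whole one-parameter family $\<\tau^{-1}\tau^\alpha(\tau^{r-1})^{\alpha+1}\>^{\frac{1}{r},\text{ext}}_0 = \alpha!/(-r)^\alpha$, proved separately in \cite{BCT_Closed_Extended}. Your $n=2$ base case, while correct as far as it goes, is a proper subset of what is actually needed to start the induction.

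Second, the phrase ``telescoping recursion $a_n = -(n-2)a_{n-1}/r$'' misrepresents the structure of the identity. The TRR-derived recursion is a convolution over all splittings $I\sqcup J = [l]$ weighted by binomial coefficients with both a positive and a negative sum, not a two-term linear recursion. Verifying that the closed-form answer satisfies it is the actual content of the proof: the paper rewrites the identity as a polynomial identity in formal variables $b_i = r-\alpha_i$ and proves it by a two-step evaluation (specializing $b_i = 0$ to reduce to the $l-1$ case, then checking the top coefficient of $b_2\cdots b_{l-1}$ vanishes). Nothing in your sketch addresses why the subset convolution should collapse, and it does not collapse for a simple reason: the closed factor in the splitting is generally not a three-point function, so the splitting sum genuinely has many nonvanishing terms. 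Your base case and the dimension-vanishing observation are fine, but the inductive step as written would fail.
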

\begin{proof}
Equivalently, we have to prove that
\begin{gather}\label{eq:primary extended closed,equiv}
\<\tau^{-1}\prod_{i=1}^l\tau^{\alpha_i}(\tau^{r-1})^k\>^{\frac{1}{r},\text{ext}}_0=
\frac{(k+l-2)!}{(-r)^{k+l-2}},
\end{gather}
where $l+k\ge 2$, $0\le\alpha_i\le r-2$, and the condition $\sum_{i=1}^l(r-\alpha_i)+k=r+1$ is satisfied. For $l\le 1$, formula~\eqref{eq:primary extended closed,equiv} says that
\begin{gather}\label{eq:one-point extended closed}
\<\tau^{-1}\tau^\alpha(\tau^{r-1})^{\alpha+1}\>^{\frac{1}{r},\text{ext}}_0=\frac{\alpha!}{(-r)^\alpha},\quad 0\le \alpha\le r-1,
\end{gather}
and it was proved in~\cite[Section~4.4]{BCT_Closed_Extended}.

Suppose that $l\ge 2$. For a subset $I\subset [l]$, let
$k_I:=r+1-\sum_{i\in I}(r-\alpha_i)$ and $A_I:=\<\tau^{-1}\prod_{i\in I}\tau^{\alpha_i}(\tau^{r-1})^{k_I}\>^{\frac{1}{r},\text{ext}}_0$. In~\cite{BCT_Closed_Extended}, we found the recursion
\begin{gather}\label{eq:recursion for extended for many points}
\frac{(r+k-1)!}{k!(-r)^{r-1}}A_{[l]}=\hspace{-0.2cm}\sum_{\substack{I\sqcup J=[l]\\1\in I,\,l\in J}}\hspace{-0.1cm}{r+k-1\choose k_I-1}A_IA_J-\hspace{-0.2cm}\sum_{\substack{I\sqcup J=[l]\\1,l\in I,\,J\ne\emptyset}}\hspace{-0.2cm}{r+k-1\choose k_I}A_IA_J,
\end{gather}
which allows one to compute all primary closed extended intersection numbers starting from the intersection numbers~\eqref{eq:one-point extended closed}. Therefore, we have to check that the right-hand side of~\eqref{eq:primary extended closed,equiv} satisfies recursion~\eqref{eq:recursion for extended for many points}, i.e., to check that
\begin{align*}
\frac{(r+k-1)!}{k!(-r)^{r-1}}\frac{(k+l-2)!}{(-r)^{k+l-2}}=&\sum_{\substack{I\sqcup J=[l]\\1\in I,\,l\in J}}{r+k-1\choose k_I-1}\frac{(|I|+k_I-2)!}{(-r)^{|I|+k_I-2}}\frac{(|J|+k_J-2)!}{(-r)^{|J|+k_J-2}}\\
&-\sum_{\substack{I\sqcup J=[l]\\1,l\in I,\,J\ne\emptyset}}{r+k-1\choose k_I}\frac{(|I|+k_I-2)!}{(-r)^{|I|+k_I-2}}\frac{(|J|+k_J-2)!}{(-r)^{|J|+k_J-2}}\notag
\end{align*}
for $l\ge 2$, $k\ge 0$, and $0\le \alpha_1,\ldots,\alpha_l\le r-2$ satisfying $\sum (r-\alpha_i)+k=r+1$. Since $I\sqcup J=[l]$ implies $k_I+k_J=r+1+k$, the last identity is equivalent~to
\begin{align}
\frac{(k+l-2)!}{k!}=&\sum_{\substack{I\sqcup J=[l]\\1\in I,\,l\in J}}\left(\prod_{i=1}^{|I|-1}(k_I-1+i)\right)\left(\prod_{j=1}^{|J|-1}(k_J-1+j)\right)\label{eq:primary extended,identity2}\\
&-\sum_{\substack{I\sqcup J=[l]\\1,l\in I,\,J\ne\emptyset}}\left(\prod_{i=1}^{|I|-2}(k_I+i)\right)\left(\prod_{j=1}^{|J|}(k_J-2+j)\right).\notag
\end{align}
Let $b_i:=r-\alpha_i$ for $i=1,\ldots,l$, and define $b_I:=\sum_{i\in I}b_i$ for $I\subset [l]$. Then $k_I=r+1-b_I$, and we can rewrite identity~\eqref{eq:primary extended,identity2} in the following way:
\begin{align}
\frac{(k+l-2)!}{k!}=&\sum_{\substack{I\sqcup J=[l]\\1\in I,\,l\in J}}\left(\prod_{i=1}^{|I|-1}(r-b_I+i)\right)\left(\prod_{j=1}^{|J|-1}(k+b_I-1+j)\right)\label{eq:primary extended,identity3}\\
&-\sum_{\substack{I\sqcup J=[l]\\1,l\in I,\,J\ne\emptyset}}\left(\prod_{i=1}^{|I|-2}(k+b_J+i)\right)\left(\prod_{j=1}^{|J|}(r-b_J-1+j)\right).\notag
\end{align}

Consider now $b_1,\ldots,b_l$ as formal variables. Note that the right-hand side of~\eqref{eq:primary extended,identity3} is a polynomial in the variables $b_i$, and, moreover, it does not depend on $b_l$. Let us prove that identity~\eqref{eq:primary extended,identity3} holds as an identity between polynomials in the ring $\mbC[b_1,\ldots,b_{l-1}]$. We prove this by induction on $l$. The case $l=2$
is trivial. Suppose $l\ge 3$. Denote the right-hand side of~\eqref{eq:primary extended,identity3} by $R_l(b_1,\ldots,b_{l-1})$. Note that the degree of $R_l$ is equal to $l-2$. So, in order to prove identity~\eqref{eq:primary extended,identity3}, it is sufficient to check that
\begin{align}
&R_l(b_1,\ldots,b_{l-1})|_{b_i=0}=\frac{(k+l-2)!}{k!},\quad\text{for any $2\le i\le l-1$},\label{eq:primary extended,property1}\\
&\Coef_{b_2\cdots b_{l-1}}R_l(b_1,\ldots,b_{l-1})=0.\label{eq:primary extended,property2}
\end{align}

Let us check~\eqref{eq:primary extended,property1}. Note that the polynomial $R_l$ is symmetric with respect to permutations of the variables $b_2,\ldots,b_{l-1}$. So, without loss of generality, we can assume that $i=2$. A direct computation gives
\begin{align*}
R_l(b_1,\ldots,b_{l-1})|_{b_2=0}=&(r+k+l-2)R_{l-1}(b_1,b_3,\ldots,b_{l-1})-\frac{(k+l-3)!}{k!}r=\\
\stackrel{\begin{smallmatrix}\text{by the induction}\\\text{assumption}\end{smallmatrix}}{=}&(r+k+l-2)\frac{(k+l-3)!}{k!}-\frac{(k+l-3)!}{k!}r=\\
=&\frac{(k+l-2)!}{k!}.
\end{align*}
Thus, equation~\eqref{eq:primary extended,property1} is true.

Let us check equation~\eqref{eq:primary extended,property2}. It is easy to see that the coefficient of $b_2\cdots b_{l-1}$ in a term in the first sum on the right-hand side of~\eqref{eq:primary extended,identity3} is equal to $(-1)^l(l-2)!$, if $I=[l]\backslash\{l\}$, and is equal to zero
otherwise. Similarly, the coefficient of $b_2\cdots b_{l-1}$ in a term in the second sum on the right-hand side of~\eqref{eq:primary extended,identity3} is equal to $(-1)^l(l-2)!$, if $J=[l]\backslash\{1,l\}$, and is equal to zero otherwise. So equation~\eqref{eq:primary extended,property2} is also true. This completes the proof of the proposition.
\end{proof}

\subsection{Proof of Theorem~\ref{theorem:primary numbers}}\label{subsection:open primaries,proof}

We see that the open intersection numbers satisfy the following properties:
\begin{align}
&\<\prod_{i=1}^l\tau^{\alpha_i}_{d_i}\sigma^k\>^{\frac{1}{r},o}_0\hspace{-0.25cm}=0,\,\text{if}\,\,\frac{(k-1)(r-2)+2\sum\alpha_i}{r}+2\sum d_i\ne k+2l-3,\label{eq:property1}\\
&\<\prod_{i=1}^l\tau^{\alpha_i}_{d_i}\>^{\frac{1}{r},o}_0=0,\label{eq:property2}
\end{align}
\begin{align}
&\frac{\d^2 F^{\frac{1}{r},o}_0}{\d t^\alpha_{p+1}\d t^\beta_q}=\sum_{\mu+\nu=r-2}\frac{\d^2 F^{\frac{1}{r},c}_0}{\d t^\alpha_p\d t^\mu_0}\frac{\d^2 F^{\frac{1}{r},o}_0}{\d t^\nu_0\d t^\beta_q}+\frac{\d F^{\frac{1}{r},\text{ext}}_0}{\d t^\alpha_p}\frac{\d^2 F^{\frac{1}{r},o}_0}{\d t^{r-1}_0\d t^\beta_q}+\frac{\d F^{\frac{1}{r},o}_0}{\d t^\alpha_p}\frac{\d^2 F^{\frac{1}{r},o}_0}{\d s\d t^\beta_q},\label{eq:property3}\\
&\frac{\d^2 F^{\frac{1}{r},o}_0}{\d t^\alpha_{p+1}\d s}=\sum_{\mu+\nu=r-2}\frac{\d^2 F^{\frac{1}{r},c}_0}{\d t^\alpha_p\d t^\mu_0}\frac{\d^2 F^{\frac{1}{r},o}_0}{\d t^\nu_0\d s}+\frac{\d F^{\frac{1}{r},\text{ext}}_0}{\d t^\alpha_p}\frac{\d^2 F^{\frac{1}{r},o}_0}{\d t^{r-1}_0\d s}+\frac{\d F^{\frac{1}{r},o}_0}{\d t^\alpha_p}\frac{\d^2 F^{\frac{1}{r},o}_0}{\d s^2},\label{eq:property4}\\
&\<\tau^1\sigma^2\>^{\frac{1}{r},o}_0=1.\label{eq:property5}
\end{align}
Property~\eqref{eq:property1} follows from the formula for the rank of the open Witten bundle. Vanishing~\eqref{eq:property2} is a consequence of Proposition~\ref{prop:no_zeroes_for_Witten_without_bdry_markings}. Equations~\eqref{eq:property3} and~\eqref{eq:property4} are the topological recursion relations from Theorem~\ref{thm:TRR}. Intersection number~\eqref{eq:property5} was computed in Example~\ref{ex:t1s^2}. Let us prove that these properties imply formula~\eqref{eq:primary open numbers}.

For $l=1$, formula~\eqref{eq:primary open numbers} says that
\begin{gather}\label{eq:primary open one-point}
\<\tau^\alpha\sigma^{\alpha+1}\>^{\frac{1}{r},o}_0=\alpha!,\quad 0\le\alpha\le r-1.
\end{gather}
Consider the intersection number $\<\tau^1_1\tau^\gamma\sigma^{\gamma+2}\>^{\frac{1}{r},o}_0$ with $0\le\gamma\le r-2$
and apply two topological recursions to it. First, we can apply recursion~\eqref{eq:property3} with $\alpha=1$, $p=0$, $\beta=\gamma$, $q=0$, and we get that $\<\tau^1_1\tau^\gamma\sigma^{\gamma+2}\>^{\frac{1}{r},o}_0$ is equal to
\begin{gather}\label{first expression for auxiliary for open one-point}
{\gamma+2\choose 2}\<\tau^1\sigma^2\>^{\frac{1}{r},o}_0\<\tau^\gamma\sigma^{\gamma+1}\>^{\frac{1}{r},o}_0.
\end{gather}
On the other hand, by formula~\eqref{eq:property4} with $\alpha=1$ and $p=0$, the intersection number $\<\tau^1_1\tau^\gamma\sigma^{\gamma+2}\>^{\frac{1}{r},o}_0$ is equal to
\begin{align}\label{second expression for auxiliary for open one-point}
&{\gamma+1\choose 2}\<\tau^1\sigma^2\>^{\frac{1}{r},o}_0\<\tau^\gamma\sigma^{\gamma+1}\>^{\frac{1}{r},o}_0+\\
&+\<\tau^1\tau^\gamma\tau^{r-3-\gamma}\>^{\frac{1}{r},\text{ext}}_0\<\tau^{\gamma+1}\sigma^{\gamma+2}\>^{\frac{1}{r},o}_0+\delta_{\gamma,r-2}\<\tau^1\tau^{r-2}\>^{\frac{1}{r},o}_0\<\sigma^{r+1}\>^{\frac{1}{r},o}_0.\notag
\end{align}
By property~\eqref{eq:property2}, the last summand here vanishes. Note also that \[\<\tau^1\tau^\gamma\tau^{r-3-\gamma}\>^{\frac{1}{r},\text{ext}}_0=1\] (see~Proposition~\ref{prop:primary extended closed} and, for example, \cite[Section~0.6]{PPZ}). Therefore, equating expressions~\eqref{first expression for auxiliary for open one-point} and~\eqref{second expression for auxiliary for open one-point}, we obtain \[(\gamma+1)\<\tau^1\sigma^2\>^{\frac{1}{r},o}_0\<\tau^\gamma\sigma^{\gamma+1}\>^{\frac{1}{r},o}_0=\<\tau^{\gamma+1}\sigma^{\gamma+2}\>^{\frac{1}{r},o}_0,\] for $0\le\gamma\le r-2$. Since $\<\tau^1\sigma^2\>^{\frac{1}{r},o}_0=1$, we get formula~\eqref{eq:primary open one-point}.

The intersection number $\<\prod_{i=1}^l\tau^{\alpha_i}\sigma^k\>^{\frac{1}{r},o}_0$ is zero unless $\frac{(k-1)(r-2)+2\sum\alpha_i}{r}=2l+k-3$, which is equivalent to $\sum(r-\alpha_i)+k=r+1$. For a subset $I\subset[l]$ denote $k_I:=r+1-\sum_{i\in I}(r-\alpha_i)$ and $A_I:=\<\prod_{i\in I}\tau^{\alpha_i}\sigma^{k_I}\>^{\frac{1}{r},o}_0$. 

Let us fix $l\ge 2$ and numbers $0\le\alpha_1,\ldots,\alpha_l\le r-1$ satisfying $k:=k_{[l]}\ge 0$. Let us compute the intersection number
$\<\tau_1^{\alpha_1}\prod_{i=2}^l\tau^{\alpha_i}_0\sigma^{k+r}\>^{\frac{1}{r},o}_0$ in two ways:
\begin{enumerate}[(i)]
\item applying relation~\eqref{eq:property3} with $\alpha=\alpha_1$, $\beta=\alpha_l$, and $p=q=0$, we obtain
\begin{gather*}
\sum_{\substack{I\sqcup J=[l]\\1\in I,\,l\in J}}\sum_{\substack{\mu\ge -1,\,\nu\ge 0\\\mu+\nu=r-2}}\<\prod_{i\in I}\tau^{\alpha_i}\tau^\mu\>^{\frac{1}{r},\text{ext}}_0\underline{\<\tau^\nu\prod_{j\in J}\tau^{\alpha_j}\sigma^{k+r}\>^{\frac{1}{r},o}_0}+\sum_{\substack{I\sqcup J=[l]\\1\in I,\,l\in J}}{r+k\choose k_I}A_IA_J,
\end{gather*}
where the underlined term vanishes, because $r-\nu+\sum_{j\in J}(r-\alpha_j)+k+r>r+1$, since $J\ne\emptyset$;
\item applying relation~\eqref{eq:property4} with $\alpha=\alpha_1$ and $p=0$, we obtain
\begin{gather*}
\delta_{k,0}\<\prod_{i=1}^l\tau^{\alpha_i}\tau^{-1}\>^{\frac{1}{r},\text{ext}}_0\<\tau^{r-1}\sigma^r\>^{\frac{1}{r},o}_0+\sum_{\substack{I\sqcup J=[l]\\1\in I}}{r+k-1\choose k_I}A_IA_J.
\end{gather*}
\end{enumerate}
Equating the resulting two expressions, we get
\begin{align}\label{eq:recursion for open many-point}
&{r+k-1\choose k}\<\sigma^{r+1}\>^{\frac{1}{r},o}_0A_{[l]}+\delta_{k,0}\<\prod_{i=1}^l\tau^{\alpha_i}\tau^{-1}\>^{\frac{1}{r},\text{ext}}_0\<\tau^{r-1}\sigma^r\>^{\frac{1}{r},o}_0=\\
=&\sum_{\substack{I\sqcup J=[l]\\1\in I,\,l\in J}}{r+k-1\choose k_I-1}A_IA_J-\sum_{\substack{I\sqcup J=[l]\\1,l\in I,\,J\ne\emptyset}}{r+k-1\choose k_I}A_IA_J.\notag
\end{align}

Denote $-\frac{1}{r!}\<\sigma^{r+1}\>^{\frac{1}{r},o}_0=C$. For $l=2$, $\alpha_1=r-1$, and $\alpha_2=1$, relation~\eqref{eq:recursion for open many-point} gives $-Cr\<\tau^{r-1}\tau^1\sigma\>=1$, which, in particular, implies that $C\ne 0$. More generally, using~\eqref{eq:recursion for open many-point} we obtain
$$
\<\tau^\alpha\tau^\beta\sigma^{\alpha+\beta+1-r}\>^{\frac{1}{r},o}_0=-\frac{k!}{Cr},\quad \alpha+\beta+1-r\ge 1.
$$
Relation~\eqref{eq:recursion for open many-point} with $l=3$, $k=0$, $\alpha_1=\alpha_3=r-1$, and $\alpha_2=1$, gives
\begin{align*}
\<\tau^{r-1}\tau^1\tau^{r-1}\tau^{-1}\>^{\frac{1}{r},\text{ext}}_0\<\tau^{r-1}\sigma^r\>^{\frac{1}{r},o}_0=&2\<\tau^{r-1}\tau^1\sigma\>^{\frac{1}{r},o}_0\<\tau^{r-1}\sigma^r\>^{\frac{1}{r},o}_0\\
&-\<\tau^{r-1}\tau^{r-1}\sigma^{r-1}\>^{\frac{1}{r},o}_0\<\tau^1\sigma^2\>^{\frac{1}{r},o}_0.
\end{align*}
Substituting the computed expressions for the correlators in this formula, we obtain $C=1$.

Let us finally prove formula~\eqref{eq:primary open numbers}. We see that relation~\eqref{eq:recursion for open many-point} allows us to compute all the intersection numbers $\<\prod_{i=1}^l\tau^{\alpha_i}\sigma^k\>^{\frac{1}{r},o}_0$ starting from the ones with $l\le 1$ or with $k=0$. So it is sufficient to check that the right-hand side of~\eqref{eq:primary open numbers} satisfies equation~\eqref{eq:recursion for open many-point}, i.e., to check the identity
\begin{align*}
{r+k-1\choose k}(-r!)\frac{(k+l-2)!}{(-r)^{l-1}}=&\hspace{-0.25cm}\sum_{\substack{I\sqcup J=[l]\\1\in I,\,l\in J}}\hspace{-0.2cm}{r+k-1\choose k_I-1}\frac{(|I|+k_I-2)!}{(-r)^{|I|-1}}\frac{(|J|+k_J-2)!}{(-r)^{|J|-1}}\\
&\hspace{-1.5cm}-\sum_{\substack{I\sqcup J=[l]\\1,l\in I,\,J\ne\emptyset}}{r+k-1\choose k_I}\frac{(|I|+k_I-2)!}{(-r)^{|I|-1}}\frac{(|J|+k_J-2)!}{(-r)^{|J|-1}}
\end{align*}
for $k\ge 1$, $l\ge 2$, and numbers $0\le\alpha_1,\ldots,\alpha_l\le r-1$ satisfying $\sum (r-\alpha_i)+k=r+1$. It is easy to see that this identity is equivalent to identity~\eqref{eq:primary extended,identity2}, which we have already proved (strictly speaking, in equation~\eqref{eq:primary extended,identity2} we required the condition $0\le\alpha_i\le r-2$, but we actually proved it for any integers $\alpha_1,\ldots,\alpha_l$ satisfying $\sum (r-\alpha_i)+k=r+1$). Theorem~\ref{theorem:primary numbers} is proved.


\subsection{Proof of Theorems~\ref{thm:main} and~\ref{thm:open-closed}}

In~\cite[Theorem 4.6]{BCT_Closed_Extended}, we proved that
\begin{gather*}
F_0^{\frac{1}{r},\text{ext}}(t^{\le r-2}_*,t^{r-1}_*)=\sqrt{-r}\phi_0\left(t^{\le r-2}_*,\frac{1}{\sqrt{-r}}t^{r-1}_*\right).
\end{gather*}
Therefore, Theorems~\ref{thm:main} and~\ref{thm:open-closed} are equivalent.

Let us prove Theorem~\ref{thm:open-closed}. Denote
\begin{align}
&\tF(t^*_*,s):=-\frac{1}{r}\left.F^{\frac{1}{r},\text{ext}}_0\right|_{t^{r-1}_d\mapsto t^{r-1}_d-r\delta_{d,0}s}+\frac{1}{r}F^{\frac{1}{r},\text{ext}}_0,\notag\\
&\<\prod_{i=1}^l\tau^{\alpha_i}_{d_i}\sigma^k\>^{\tF}:=\left.\frac{\d^{l+k}\tF}{\d t^{\alpha_1}_{d_1}\cdots\d t^{\alpha_l}_{d_l}\d s^k}\right|_{\substack{t^*_*=0\\s=0}}.\label{eq:tF-correlators}
\end{align}
We claim that the correlators~\eqref{eq:tF-correlators} satisfy properties~\eqref{eq:property1} -- \eqref{eq:property5}. 

The fact that a correlator $\<\prod_{i=1}^l\tau^{\alpha_i}_{d_i}\sigma^k\>^{\tF}$ is zero unless $\frac{(k-1)(r-2)+2\sum\alpha_i}{r}+2\sum d_i=k+2l-3$ is clear from the definition of the closed extended $r$-spin correlators~\cite[Section 3.1]{BCT_Closed_Extended}. 

The property $\<\prod_{i=1}^l\tau^{\alpha_i}\>^{\tF}=0$ is obvious. 

In~\cite[Lemma 3.6]{BCT_Closed_Extended}, we proved that the function~$F^{\frac{1}{r},\text{ext}}_0$ satisfies 
\begin{gather*}
\frac{\d^2 F^{\frac{1}{r},\text{ext}}_0}{\d t^\alpha_{p+1}\d t^\beta_q}=\sum_{\mu+\nu=r-2}\frac{\d^2 F^{\frac{1}{r},c}_0}{\d t^\alpha_p\d t^\mu_0}\frac{\d^2 F^{\frac{1}{r},\text{ext}}_0}{\d t^\nu_0\d t^\beta_q}+\frac{\d F^{\frac{1}{r},\text{ext}}_0}{\d t^\alpha_p}\frac{\d^2 F^{\frac{1}{r},\text{ext}}_0}{\d t^{r-1}_0\d t^\beta_q},
\end{gather*}
which implies that relations~\eqref{eq:property3} and~\eqref{eq:property4} are true with $F^{\frac{1}{r},o}_0$ replaced by~$\tF$.

Finally, in~\cite[Lemma 3.8]{BCT_Closed_Extended}, we proved that $\<\tau^{-1}\tau^1(\tau^{r-1})^2\>^{\frac{1}{r},\text{ext}}_0=-\frac{1}{r}$ and, therefore, $\<\tau^1\sigma^2\>^{\tF}=1$.

In Section~\ref{subsection:open primaries,proof}, we proved that properties~\eqref{eq:property1} -- \eqref{eq:property5} are sufficient to reconstruct all primary open intersection numbers $\<\prod_{i=1}^l\tau^{\alpha_i}\sigma^k\>^{\frac{1}{r},o}_0$. Since the correlators $\<\prod_{i=1}^l\tau^{\alpha_i}_{d_i}\sigma^k\>^{\tF}$ also satisfy these properties, we can conclude that $\<\prod_{i=1}^l\tau^{\alpha_i}\sigma^k\>^{\frac{1}{r},o}_0=\<\prod_{i=1}^l\tau^{\alpha_i}\sigma^k\>^{\tF}$. Note that the dimension constraint in property~\eqref{eq:property1} implies that $k+l\ge 2$. Therefore, using topological recursion relations~\eqref{eq:property3} and~\eqref{eq:property4}, one can reconstruct all open intersection numbers starting from primary numbers. The same is true for the correlators~\eqref{eq:tF-correlators} and, thus, $F^{\frac{1}{r},o}_0=\tF$. Theorem~\ref{thm:open-closed} is proved.

\subsection{Open string and dilaton equations}

\begin{prop}\label{prop:open string and dilaton}
We have
\begin{align*}
&\<\tau^0_0\prod_{i=1}^l\tau^{\alpha_i}_{d_i}\sigma^k\>^{\frac{1}{r},o}_0=
\begin{cases}
\displaystyle \sum_{\substack{1\le i\le l\\d_i>0}}\<\tau_{d_i-1}^{\alpha_i}\prod_{j\ne i}\tau^{\alpha_j}_{d_j}\sigma^k\>^{\frac{1}{r},o}_0,&\text{if $l\ge 1$},\\
\delta_{k,1},&\text{if $l=0$},
\end{cases}\\
&\<\tau^0_1\prod_{i=1}^l\tau^{\alpha_i}_{d_i}\sigma^k\>^{\frac{1}{r},o}_0=(k+l-1)\<\prod_{i=1}^l\tau^{\alpha_i}_{d_i}\sigma^k\>^{\frac{1}{r},o}_0.
\end{align*}
\end{prop}
\begin{proof}
By Theorem~\ref{thm:open-closed}, the required equations follow from
\begin{align}
&\<\tau^{-1}_0\tau^0_0\prod_{i=1}^n\tau^{\beta_i}_{b_i}\>^{\frac{1}{r},\text{ext}}_0=
\begin{cases}
\displaystyle \sum_{\substack{1\le i\le n\\b_i>0}}\<\tau^{-1}_0\tau_{b_i-1}^{\beta_i}\prod_{j\ne i}\tau^{\beta_j}_{b_j}\>^{\frac{1}{r},\text{ext}}_0,&\text{if $n\ge 2$},\\
\delta_{\beta_1,r-1}\delta_{b_1,0},&\text{if $n=1$},
\end{cases}\label{eq:string for extended}\\
&\<\tau^{-1}_0\tau^0_1\prod_{i=1}^n\tau^{\beta_i}_{b_i}\>^{\frac{1}{r},\text{ext}}_0=(n-1)\<\tau^{-1}_0\prod_{i=1}^n\tau^{\beta_i}_{b_i}\>^{\frac{1}{r},\text{ext}}_0,\label{eq:dilaton for extended}
\end{align}
where $0\le\beta_i\le r-1$ and $b_i\ge 0$. Equation~\eqref{eq:string for extended} is proved in~\cite[Lemma~3.7]{BCT_Closed_Extended}. Equation~\eqref{eq:dilaton for extended} is proved similarly using~\cite[Lemma~3.5]{BCT_Closed_Extended} and mimicking the proof of the ordinary dilaton equation on $\oCM_{0,n}$.
\end{proof}

\section{Constructions of boundary conditions, independence of choices}\label{sec:constructions_bc}

This section is devoted to constructing the different types of multisections we use throughout the article, and to proving Theorem \ref{thm:int_numbers_well_defined}.   

\subsection{Constructions of a single global canonical multisection}
\begin{definition}\label{def:strongly_pos}
Let $\Gamma$ be a graded graph.
A canonical multisection $s$ is \emph{strongly positive} if, for any $\Lambda\in\partial^!\Gamma\setminus\partial^+\Gamma$ such that $\CB\Lambda$ has some connected components $\Xi_1,\ldots, \Xi_a$ without boundary tails, and every $C\in\CM_\Lambda^{1/r}$, each local branch of the projection of $s_C$ to a $\Xi_i$-component evaluates positively on some nonempty subset of the boundary $\partial(\CB\Sigma^{\Xi_i})$ of the $\Xi_i$-component of $\CB\Sigma$ (defined in Remark \ref{rmk:F_in_surface_level}).
\end{definition}

The first step towards constructing canonical boundary conditions was the local Proposition \ref{prop:pointwise_positivity}. The second step is the following global proposition.
\begin{prop}\label{prop:a_single_section_for_Witten_or_L_i}Let $\Gamma$ be a smooth, graded connected graph.
\begin{enumerate}
\item\label{it:single_Witten}
There exists a transverse special canonical multisection of $\cW_\Gamma$. 
\item\label{it:single_Witten_strong_pos}
There exists a transverse special canonical strongly positive multisection of $\cW_\Gamma.$ Moreover, for any such multisection $s$ one can find a strongly positive multisection $\hat{s}$ such that, for any $C\in\oPM_\Gamma$ and for any local branch $\hat{s}^i_C$ of $\hat{s}_C$ and any local branch $s_C^j$ of $s_C$, the subset of $\partial\Sigma \subseteq C$ on which $\hat{s}^i_C$ is positive intersects the subset of $\partial\Sigma$ on which $s_C^j$ is positive.
\item\label{it:single_Witten-span}
For any $p\in\oPM_\Gamma$, there exist $s_1,\ldots, s_N\in C_m^\infty(\oPM_\Gamma,\cW)$ with compact support such that for any $\Lambda\in\partial^0\Gamma$, the section $s_i|_{\oPM_\Lambda}$ is pulled back from $\oPM_{\CB\Lambda}$, and for any choice of local branches $i_1,\ldots, i_N$ of $s_1(p),\ldots,s_N(p)$ respectively, the set of vectors $\{s_j^{i_j}(p)\}_{j=1,\ldots, N}$ spans $\cW_p.$
\item\label{it:single_L_i}
For any $C\in\partial\oCM^{\frac{1}{r}}_{0,k,\vec{a}}$ such that $z_i$ does not belong to a partially stable component of $\CB C,$ one can find $s\in \CS_i$ that does not vanish at $C$.

\end{enumerate}
\end{prop}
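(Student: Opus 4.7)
The plan is a multi-step inductive construction, carried out stratum-by-stratum, and essentially parallel for each of the four items. The common framework is to first build coherent multisections on each abstract vertex moduli $\TRAM_v$ (for items (1)--(3)) or on each single-vertex moduli $\oPM_{v_i^*(\Gamma)}$ (for item (4)), and then glue them back together via the identifications dictated by the forgetful map $F_\Lambda$ and, where applicable, the assembling operator $\Ass$, yielding a global multisection satisfying the required special canonical conditions.

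For item~\eqref{it:single_Witten}, construct $s^v \in C_m^\infty(\TRAM_v,\cW)$ for each $v\in\mathcal{V}(\Gamma)$ by induction on $\dim\oCM_v^{1/r}$. At each step, Proposition~\ref{prop:pointwise_positivity} produces local canonical sections satisfying pointwise positivity; average over automorphism groups to pass to orbifold multisections, and patch via a partition of unity using that sums of canonical multisections are canonical (Observation~\ref{obs:sum_of_canonical}). The coherence condition for closed anchors of twist $-1$ is handled exactly as in the construction of Section~\ref{ex:simple_coherent}. Once all $s^v$ have been built, prescribe $s$ on each boundary stratum of $\oPM_\Gamma$ by formula~\eqref{eq:special_canonical_Witten}; consistency of this prescription along every chain of codimension-one degenerations follows from Observations~\ref{obs:for_comp} and~\ref{obs:comp_of_Ass}, and extension to the interior is once more by partition of unity. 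Finally, achieve transversality by a small generic perturbation of $s$ within the space of canonical multisections, using that transversality is an open and dense condition.

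Items~\eqref{it:single_Witten_strong_pos}, \eqref{it:single_Witten-span}, and~\eqref{it:single_L_i} are refinements or specializations of the above. For~\eqref{it:single_Witten_strong_pos}, replace the full positivity required in Proposition~\ref{prop:pointwise_positivity} by the strictly weaker condition that each local branch of $s^v$ evaluates positively on some nonempty arc of each component of $\partial\CB\Sigma^v$; this follows directly from Lemma~\ref{lem:surj_of_eval} applied to a single boundary point. The ``moreover'' part is handled by averaging $s$ with a perturbed second strongly positive multisection, chosen so that its positive arcs sweep out enough of $\partial\Sigma$ to meet those of every local branch of $s$. For~\eqref{it:single_Witten-span}, fix a basis $e_1,\ldots,e_N$ of $\cW_p$ using the surjectivity part of Lemma~\ref{lem:surj_of_eval} on $\CB\Gamma(p)$, extend each $e_j$ to a local section on a small neighborhood of the image of $p$ in $\oPM_{\CB\Gamma(p)}$, pull back along $F_{\Gamma(p)}$, and multiply by a bump function; continuity of the local branches guarantees the spanning property in a neighborhood of $p$. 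For~\eqref{it:single_L_i}, the hypothesis that $z_i$ does not lie on a partially stable component of $\CB C$ makes $v_i^*(\CB\Gamma(C))$ stable, so by Observation~\ref{obs:CL_i pulled} it suffices to build a canonical multisection of $\CL_i\to\oPM_{v_i^*(\Gamma(C))}$ nonzero at the image of $C$ (trivial, since $\CL_i$ is a complex line bundle) and extend arbitrarily to the remaining stable $v\in\mathcal{V}^i(\Gamma)$.

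The main obstacle is item~\eqref{it:single_Witten}: simultaneously managing the pullback-from-base condition across all strata, the coherence condition at Ramond nodes (which is precisely what forces the introduction of $\TRAM_v$ rather than $\oCM_v^{1/r}$), and the positivity requirement near the excluded strata $\partial^+\oPM_\Gamma$. Concretely, the inductive step must ensure that the data already fixed on deeper strata extend canonically to the current stratum without destroying positivity; this requires careful bookkeeping of the $\Lambda$-neighborhoods and $\Lambda$-families of intervals from Definition~\ref{def:positive_neighborhoods1}, and reduces in the end to checking compatibility along every codimension-one degeneration, where Observations~\ref{obs:for_comp} and~\ref{obs:comp_of_Ass} do the decisive combinatorial work.
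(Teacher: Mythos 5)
Your framework for item~\eqref{it:single_Witten} matches the paper's: induct on $\dim\oCM_v^{1/r}$, handle closed vertices via coherent sections as in Section~\ref{ex:simple_coherent}, prescribe boundary data by \eqref{eq:special_canonical_Witten}, check consistency with Observations~\ref{obs:for_comp} and~\ref{obs:comp_of_Ass}, and extend. However, your transversality step (``small generic perturbation within the space of canonical multisections'') is a genuine gap: the space of canonical multisections is tightly constrained on every boundary stratum (pulled back from the base, positive, coherent), so it is not obvious that transverse ones are dense. The paper instead builds transversality in from the start: Observation~\ref{obs:trans_for_Ass} shows the assembled boundary data is transverse whenever the components are, and Lemma~\ref{lemma:extension} extends a transverse boundary multisection to a transverse global one while preserving positivity and $\Aut$-invariance. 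You would need a substitute for that extension lemma.

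The most serious gap is item~\eqref{it:single_Witten-span}. You propose to pick a basis of $\cW_p$, extend locally, pull back, and cut off by a bump function. This cannot work, for two reasons that the paper addresses explicitly. First, the boundary condition requires $s_i|_{\oPM_\Lambda}$ to be pulled back from $\oPM_{\CB\Lambda}$ for \emph{every} $\Lambda\in\partial^0\Gamma$; multiplying by a bump function on $\oPM_\Gamma$ destroys this unless the bump function itself is pulled back from the base on each such $\Lambda$, and arranging a single such function compatibly across all strata while keeping compact support is essentially the whole problem. Second, and more fundamentally, the paper observes that multisections satisfying the special-canonical decomposition property cannot possibly span $\cW_p$ when $\Aut(\CB C)$ permutes two components $D_1$, $D_2$ of $\CB C$: any such multisection has a branch with equal projections to $\cW_{D_1}$ and $\cW_{D_2}$, so no collection of branches at $p$ spans. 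The paper's actual construction breaks this symmetry by indexing branches with subsets $Q\subset F$ and working with the families $s^{\Upsilon}_Q$, split into cases according to whether $\Aut(\CB C)$ preserves $C^\Xi$. Without this device your proposal fails at exactly the points where automorphisms act nontrivially.

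Item~\eqref{it:single_Witten_strong_pos} also contains a misreading: strong positivity (Definition~\ref{def:strongly_pos}) is an \emph{additional} requirement on top of canonicity and positivity, not a weakening of the positivity from Proposition~\ref{prop:pointwise_positivity}; the paper proves this item by running the same induction as item~\eqref{it:single_Witten} with the refined extension Lemma~\ref{lemma:extension}(2)--(3), noting that the partially stable base case is automatically strongly positive. Item~\eqref{it:single_L_i} is roughly in the right spirit, but ``extend arbitrarily to the remaining stable $v$'' skips the compatibility relations in Definition~\ref{def:special_canonical_for_L_i} across nested $\Lambda$; the paper's appeal to the inductive argument of \cite[Proposition 3.49]{PST14} is doing real work there.
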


The proof 
relies on three lemmas.  The first one is a transversality claim:

\begin{lemma}\label{obs:trans_for_Ass}
Let $\Lambda$ be a graded graph (open or closed) without boundary edges, and let
$\{s^v\}_{v\in \Conn(\detach(\Lambda))}$ be multisections such that
\begin{enumerate}
\item if $v$ is closed and has an anchor of twist $-1$, then $s^v\in C_m^\infty(\TRAM_v,\cW)$ is a coherent multisection;
\item if $v$ is closed without an anchor of twist $-1$, then $s^v\in C_m^\infty(\oCM_v^{1/r},\cW)$;
\item if $v$ is open, then $s^v\in C_m^\infty(\oPM_v,\cW)$;
\item $\bar{s}^v\pitchfork 0$ (recall that $\overline{s}^v = s^v$ unless the anchor of $v$ has twist $-1$).
\end{enumerate}
Then also $\bar{s}\pitchfork 0$ for $s=\Ass_{\Lambda,E(\Lambda)}((s^v)_{v\in \Conn(\detach(\Lambda))})$.
\end{lemma}
\begin{proof}
By induction, we may restrict to the case where $\Lambda$ has a single edge $e$ and $\Conn(\detach(\Lambda))=\{v_1,v_2\}$. If $v_1$ and $v_2$ are not connected by a Ramond internal node, then the Witten bundle on $\oCM_\Lambda$ decomposes as a direct sum, so if each $s^{v_i}$ is transverse to zero, then so is $s$.

Suppose, then, that $\Lambda$ is obtained by gluing the anchor of $v_2$ to a Ramond internal tail of $v_1.$ Let $u_i\in\oCM_{v_i}^{1/r}$ be a zero of $\bar{s}^{v_i}$. Then $\Detach_e^{-1}(u_1,u_2)$ is a zero of $\bar{s}=\overline{\Ass_{\Lambda,e}(s^{v_1},s^{v_2})}.$
By the transversality assumption, the derivatives of (each local branch of) $\bar{s}^{v_i}(u_i)$ span $\cW_{u_i}.$ Thus the derivatives of the branches of $\overline{\Ass_{\Lambda,e}(s^{v_1},s^{v_2})}(\Detach_e^{-1}(u_1,u_2))$ in the $\CM_{v_i}^{1/r}$ directions span subspaces of $W_i$ of $\cW_{\Detach_e^{-1}(u_1,u_2)}.$
By~\eqref{eq:decompses2} of Proposition \ref{pr:decomposition}, the sequence \begin{equation}\label{eq:for_trans}0\to \cW_{u_2}\to\cW_{\Detach_e^{-1}(u_1,u_2)}\to \cW_{u_1}\to 0\end{equation} is exact.
The space $W_2$ is just the image, under the left map of \eqref{eq:for_trans}, of $\cW_{u_2}$ in $\cW_{u_1,u_2},$ while $W_1$ surjects to $\cW_{u_1}$ by the right map of \eqref{eq:for_trans}. Thus, at $\Detach_e^{-1}(u_1,u_2),$ the derivatives of the branches of $\overline{\Ass_{\Lambda,e}(s^{v_1},s^{v_2})}$ span $\cW_{\Detach_e^{-1}(u_1,u_2)},$ and the vanishing at $\Detach_e^{-1}(u_1,u_2)$ is transverse.  This completes the proof of the lemma.
\end{proof}

The next lemmas are extension results that allow inductive construction of mutlisections with the properties specified in Proposition~\ref{prop:a_single_section_for_Witten_or_L_i}~ \eqref{it:single_Witten}, \eqref{it:single_Witten_strong_pos}, \eqref{it:single_Witten-span}.

\begin{lemma}\label{lem:a single section for Witten-closed step}
Let $\Gamma$ be a smooth, connected, closed, graded $r$-spin graph.  Suppose that
$s^v\in C_m^\infty(\TRAM_v,\cW)$
are coherent multisections for each $v\in \mathcal{V}(\Gamma)\setminus\Gamma,$ such that
\begin{enumerate}
\item\label{it:1_for_closed_ext} $\bar{s}^v\pitchfork 0$;
\item\label{it:2_for_closed_ext} for each $\Lambda\in\partial v,$
$s^v|_{\TRAM_\Lambda}=\Ass_{\Lambda,E(\Lambda)}((s^\Xi)_{\Xi\in \Conn(\detach(\Lambda))});$
\item\label{it:3_for_closed_ext} 
$s^v$ evaluates positively at the anchor,
if the anchor of $v$ has twist $r-1$. 
\end{enumerate}
Then one may construct a multisection $s^\Gamma\in C_m^\infty(\TRAM_\Gamma,\cW)$ that satisfies the above properties with $v$ replaced by $\Gamma$.
\end{lemma}

\begin{lemma}\label{lemma:extension}
Let $\Gamma$ be a smooth, connected, open, graded $r$-spin dual graph, and let $\zeta\in C_m^\infty(\bigcup_{\Lambda\in\partial\Gamma\setminus\partial^+\Gamma}\CM_\Lambda^{1/r},\cW).$
\begin{enumerate}
\item\label{it:extension_general}
Suppose that for any $\Lambda\in\partial\Gamma\setminus\partial^+\Gamma$, it holds that 
$\zeta|_{\oPM_\Lambda}=F_\Lambda^*\zeta^{\CB\Lambda},$
for some $\zeta^{\CB\Lambda}\in C_m^\infty(\oPM_{\CB\Lambda},\cW)$, which is positive with respect to $\CB\Lambda$, $\Aut(\CB\Gamma)$-invariant, and transverse.
Then one can extend $\zeta$ to a positive, transverse, $\Aut(\Gamma)$-invariant
multisection $\sigma\in C_m^\infty(\oPM_\Gamma,\cW).$
\item\label{it:extension_pos}
Suppose we make the additional assumption on $\zeta$ that for any $\Lambda\in\partial\Gamma\setminus\partial^+\Gamma$ such that $\CB\Lambda$ has at least one vertex without boundary tails, $\zeta|_{\CM_\Lambda^{1/r}}$ is strongly positive in the sense of Definition \ref{def:strongly_pos}. Then the multisection $\sigma$ of item \eqref{it:extension_general} can be chosen to be strongly positive on $\oPM_\Gamma.$
\item\label{it:extension_pos_moreover}
Let $s$ be a strongly positive multisection on $\oPM_\Gamma$, and assume, in addition to the assumptions of item \eqref{it:extension_general}, that for any $\Lambda\in\partial\Gamma\setminus\partial^+\Gamma$ such that $\CB\Lambda$ has at least one vertex without boundary tails, and for any $C\in{\CM_\Lambda^{1/r}}$ and any branch of $\zeta_C$, the subset of $\partial\Sigma \subseteq C$ on which $\zeta_{C}$ evaluates positively intersects all of the subsets of $\partial\Sigma$ on which the branches of $s_C$ are positive. Then the multisection $\sigma$ of item \eqref{it:extension_general} satisfies the following further condition: if $\Gamma$ has no boundary tails, then for any $C\in{\oPM_\Gamma}$ and any branch of $\sigma_C,$ the subset of $\partial\Sigma$ on which $\sigma_{C}$ evaluates positively intersects all subsets of $\partial\Sigma$ on which the branches of $s_C$ are positive.
\end{enumerate}
\end{lemma}

The proofs of these lemmas 
are quite involved, so we defer them until later.  
\begin{proof}[Proof of Proposition~\ref{prop:a_single_section_for_Witten_or_L_i}]
To prove item \eqref{it:single_Witten} of the proposition, we first note that, by Lemma \ref{lem:a single section for Witten-closed step} and induction on $\dim(\CM_v^{1/r})$, there exists a family of multisections $s^v$ for each closed $v\in\mathcal{V}(\Gamma)$
satisfying the conditions of the proposition.  What remains, then, is to consider open vertices.

We shall construct, for any abstract open vertex $v\in \mathcal{V}(\Gamma)$, a transverse positive multisection $s^v\in C_m^\infty(\oPM_v,\cW)$ such that
for any $\Lambda\in\partial v\setminus\partial^+ v,$
\begin{equation}\label{eq:s^v_ind}s^v|_{\oPM_\Lambda}=F_\Lambda^*\left(\Ass_{\CB\Lambda,E(\CB\Lambda)}((s^f)_{f\in \Conn(\detach (\CB\Lambda))})\right).\end{equation}
The resulting $s^\Gamma$ will satisfy the requirements of item \eqref{it:single_Witten} by its definition.

The construction of $s^v$ is by induction on $\dim(\oCM_v)$.  The base case is when $v$ is partially stable, and this case is settled by the first item of Proposition \ref{prop:pointwise_positivity}.
Suppose, then, that we have constructed multisections $s^v$ for any $v\in \mathcal{V}(\Gamma)$ with $\dim\oCM_v<n\leq \dim\oCM_\Gamma$, and consider $v\in \mathcal{V}(\Gamma)$ with $\dim\oCM_v=n$.  We define a multisection $\zeta$ on $\coprod_{\Lambda\in \partial v\setminus\partial^+ v}\oPM_\Lambda$ by \eqref{eq:s^v_ind}. By Observation \ref{obs:for_comp}, the multisection $\zeta$ descends to $\bigcup_{\Lambda\in \partial v\setminus\partial^+ v}\oPM_\Lambda$ as a continuous multisection that is smooth on each $\oPM_\Lambda$.  This section is automatically $\Aut(v)$-invariant, and it is transverse on each $\oPM_\Lambda,$ by Lemma \ref{obs:trans_for_Ass}.  Moreover, $\zeta$ satisfies the assumptions of item \eqref{it:extension_general} of Lemma \ref{lemma:extension} by construction, so we can use the lemma to extend $\zeta$ to a smooth, $\text{Aut}(v)$-invariant and transverse multisection $s^v$ over all $\oPM_v$.   The inductive step for the construction of $s^v$ follows.

We now turn to item \eqref{it:single_Witten_strong_pos}.  The proof of the first part is by induction, similarly to the one performed in the previous case, by using the fact that the boundary conditions in the partially stable case (which is the base case of the induction) trivially satisfy the strong positivity requirement, as they evaluate positively on the whole boundary of the partially stable components.  We then apply item \eqref{it:extension_pos} of Lemma \ref{lemma:extension} for the inductive step, and the first part is proved.

The ``moreover" part of item \eqref{it:single_Witten_strong_pos} is proven similarly. This time, we start with a strongly positive multisection $s$  and use induction.  Again the induction basis is the partially stable case, and it satisfies the requirements by the same reasoning as above. The induction step follows by applying Lemma \ref{lemma:extension},~\eqref{it:extension_pos_moreover}.

For item \eqref{it:single_Witten-span} of the proposition, first note that the claim cannot be achieved using only multisections that satisfy the second property of special canonical multisections in Definition \ref{def:special canonical for Witten}, since they are too symmetric: if $C$ is a graded surface such that $\Aut(\CB C)$ has an element that permutes two components $D_1$ and $D_2$ of $\CB C$, then for any canonical multisection there is some branch with the same projections onto $\cW_{D_1}$ and $\cW_{D_2}$ (with respect to the natural identification). The second difficulty in working with multisections that have the second property of special canonical multisections is that, because of their strong decomposition properties, it is difficult to construct them with compact support in $\oPMr$, unless they vanish on the closed moduli space.

To overcome these issues, we construct multisections as follows. We assume that the internal labels are all different. Let $r(\Lambda)$ be the maximal possible number of vertices with $r+1$ boundary labels and no internal labels that may appear in any $\Xi'=\detach(\Xi)$ for any $\Xi\in\partial^!\Lambda.$ Note that $r(\Lambda)=r(\detach(\Lambda))$ and $r(\Lambda)\geq r(\Xi)$ for any $\Xi\in\partial^!\Lambda.$

Fix $C\in\oPM_\Gamma$, and suppose that $C\in\CM_\Lambda^{1/r}.$ Note that we have $\cW_\Sigma\simeq \boxplus_{\Xi\in\Conn\CB\Lambda}\cW_{\Sigma^{\Xi}},$ where $C^\Xi$ is the $\Xi$-component of $\CB C$. The construction of multisections now splits into two cases.

The first case is when $\Aut(\CB C)$ preserves $C^\Xi,$ which happens, in particular, if $\Xi$ has internal tails.  In this case, all elements of $\Aut(C^\Xi)$ are induced by permuting the boundary markings\footnote{Such permutations induce automorphisms of $C^\Xi$ only when the open vertex $\Xi^o$ of $\detach(\Xi)$ has exactly one internal half edge, and for very specific configurations of markings.}, though there are additional automorphisms of the bundle if $C^\Xi$ has internal nodes.  For an arbitrary $w\in\cW_{C^\Xi},$ let $G(w)=\{w_1=w,\ldots, w_a\}$ be its orbit under the action of $\Aut(C^\Xi)$, lifted to Witten's bundle.  We will show that one can construct a multisection as in the statement of item \eqref{it:single_Witten-span} whose branches at $C$ are $w_1,\ldots, w_a$ when projected to the $\Xi$ component, and are zero when projected to other components.

The second case is when $\Aut(\CB C)$ does not preserve $C^\Xi$. This means, in particular, that $\Xi$ has no internal half-edges or tails.  Let $\Xi_1,\ldots, \Xi_b$ be the components of the $\Aut(\CB C)$-orbit of $C^\Xi$.  For a vector $w\in \cW_{C^\Xi}$, let $G(w)$ be the orbit of $w$ under the action of $\Aut(C^\Xi)$, and for $w_1,\ldots, w_b\in\cW_{C^\Xi},$ let $G(w_1,\ldots, w_b)$ be the collection of vectors in $\widetilde{\cW}_{C^\Xi}:=\bigoplus_{i\in[b]}\cW_{C^{\Xi_i}}$ defined by 
\[
G(w_1,\ldots,w_b)=\left\{\bigoplus\nolimits_{i\in [b]} w'_{i}|w'_1\in G(w_1),\ldots,w'_b\in G(w_b)\right\}, 
\]
where we use the canonical identification between the different $\cW_{C^{\Xi_i}}.$
Fix a set $F\subset \N$ of size at least $r(\Gamma)$, and let $W_F\subset\cW_{C^\Xi}$ be a set of vectors $w_Q$ for each $Q\subset F$ such that $|Q|=r(\Xi).$ Let 
\[
G(W_F)=\bigcup_{Q_1,\ldots, Q_b\subset F,\,|Q_i|=r(\Xi)}G(w_{Q_1},\ldots,w_{Q_b}).
\]
We show how to construct a multisection $s$ as in the statement of item \eqref{it:single_Witten-span} whose branches at $C$ are precisely $G(W_F),$ under the natural injection $\widetilde{\cW}_{C^\Xi}\to\cW.$

Write $m=\rk\cW_{C^\Xi}$.  There is no difficulty, in the first case, in finding $w^1,\ldots, w^m$ such that each element of $G(w^1)\times\cdots\times G(w^m)$ is a basis for $\cW_{C^\Xi}$, and in the second case, in finding sets $W^1_F,\ldots, W^{mb}_F$ as above such that each element of $G(W^1_F)\times\cdots\times G(W^{mb}_F)$ is a basis for $\widetilde\cW_{C^\Xi}$.  Thus, constructing multisections as above for any $w$ (in the first case) or any $W_F$ (in the second case) will prove item \eqref{it:single_Witten-span}. Write $\mathcal{V}'(\Gamma)$ for the collection of graphs that appear as connected components of $\CB\Upsilon$ for graphs $\Upsilon\in\partial^0\Gamma$ with no internal edges or contracted boundary tails.  We define, for any $\Xi$ and $w$ as in the first case and any $\Upsilon\in\mathcal{V}'(\Gamma)$, a multisection $s^\Upsilon$.  Similarly, for any $\Xi$ and $W_F$ as in the second case, any $\Upsilon\in\mathcal{V}'(\Gamma)$, and any $Q\subseteq F$ of size at least $r(\Upsilon)$, we define a multisection $s_{Q}.$ We require these multisections to satisfy:
\begin{enumerate}
\item\label{it:1_for_extending_span}
For any $\Upsilon'\in\partial^0 \Upsilon$ with no internal edges or contracted boundary tails:
\[s^\Upsilon|_{\oPM_{\Upsilon'}} =
F_{\Upsilon'}^*\left(\boxplus_{\Omega\in\Conn(\CB\Upsilon')} s^\Omega\right),\]
in the first case, while in the second case
\[s_{Q}^\Upsilon|_{\oPM_{\Upsilon'}} = \biguplus_{\{(Q_\Omega)_{\Omega\in \Conn(\CB\Upsilon')}||Q_\Omega|=r(\Omega),\bigsqcup_\Omega Q_\Omega\subseteq Q\}}
F_{\Upsilon'}^*\left(\boxplus_{\Omega\in \Conn(\CB\Upsilon')}s^\Omega_{Q_\Omega}\right),\]
where $\uplus$ is the operation defined in Notation \ref{def:union_of_sections}.
\item\label{it:2_for_extending_span}
In the first case, let $\Upsilon$ be the smoothing of $\Xi$ (c.f. Definition \ref{def:smoothingraph}).   We take the branches of $s^\Upsilon(\Sigma^{\Xi})$ to be $\{w_1,\ldots,w_a\},$ with equal weights, in the sense of Definition \ref{def:multisection}. For any other component $\Xi'\in\Conn(\CB\Lambda)$ with a smoothing $\Upsilon',$ we put $s^{\Upsilon'}(C^{\Xi'})=0.$  In the second case, each $\Xi_i$ is smooth. We take the branches of $s^{\Xi_1}_{Q}(C^{\Xi_1}),~|Q|=r(\Xi_1),$ to be $G(w_Q)$ with equal weights\footnote{This implies that the branches of $s(C)$ are precisely $G(W_F)$ with equal weights.},
and for any other component $\Xi'\in\Conn(\CB\Lambda)$ with a smoothing $\Upsilon'$, we put $s^{\Upsilon'}_Q(C^{\Xi'})=0.$
\item\label{it:3_for_extending_span}
For every $\Upsilon\in\mathcal{V}'(\Gamma)$, both  $s^\Upsilon$ and $s_{Q}^\Upsilon$ have compact support.
\end{enumerate}

The construction of $s^\Upsilon$ and $s_{Q}^\Upsilon$ is by induction on $\dim\oCM_\Upsilon^{1/r}$.   In the second case, we first define $s_Q^\Upsilon$ for $Q\subset F$ with $|Q|=r(\Upsilon)$ and then put, for $|Q|>r(\Upsilon)$,
\[s_Q^\Upsilon = \biguplus_{Q'\subset Q,~|Q'|=r(\Upsilon)}s_{Q'}^\Upsilon.\] 
This definition is seen to have the required properties, using \eqref{eq:union_of_sections}.

The base case of the inductive construction of $s^\Upsilon$ and $s_{Q}^\Upsilon$ is the partially stable case, where we have to choose one vector.  The construction from the proof of the previous items works here as well.
Suppose, then, that we have constructed multisections with the above three properties for all $\Omega\in\mathcal{V}'(\Gamma)$ with $\dim\oCM_\Omega^{1/r}<n$.
We define $s^\Upsilon|_{\partial^0\oPM_v}$ and $s^{\Upsilon}_{Q}|_{\partial^0\oPM_\Upsilon}$ by item \eqref{it:1_for_extending_span}. By Observation \ref{obs:for_comp}, and using \eqref{eq:union_of_sections} in the second case, we see that this multisection is indeed well-defined and compactly-supported on $\partial^0\oPM_v$.  Let $\Upsilon'$ be a graph identical to $\Upsilon$ but with injective labelings.  Represent the quotient $\tilde{q}:\oPM_{\Upsilon'}/G\simeq\oPM_{\Upsilon},$ where $G$ is a finite group, as in the last paragraph of Proposition \ref{prop:pointwise_positivity}. Pull back $s^{\Upsilon}|_{\partial^0\oPM_\Upsilon}$ and $s^\Upsilon_Q|_{\partial^0\oPM_\Upsilon}$ to $G$-invariant multisections $s^{\Upsilon'}|_{\partial^0\oPM_{\Upsilon'}}$ and $s^{\Upsilon'}_Q|_{\partial^0\oPM_{\Upsilon'}}.$ Extend these to multisections $\tilde{s}^{\Upsilon'}$ and $\tilde{s}^{\Upsilon'}_Q$ with compact support over $\oPM_{\Upsilon'}$.  Define
\[{s}^{\Upsilon'}=\biguplus_{g\in G}g\cdot \tilde{s}^{\Upsilon'},\;\;\;\;{s}^{\Upsilon'}_Q=\biguplus_{g\in G}g\cdot \tilde{s}^{\Upsilon'}_Q.\]
These multisections descend to the quotient $\oPM_{\Upsilon'}/G$, and we pull back using $\tilde{q}^{-1}$ to define ${s}^{\Upsilon'}$ and ${s}^{\Upsilon}_Q$ on all $\oPM_\Upsilon$, extending their previous definition over boundary.
If $\Upsilon$ is the smoothing of $\Xi$ (respectively, $\Xi_1$) we perform the extensions and symmetrizations of $s^\Upsilon$ (respectively, $s^\Upsilon_{Q}$) so that item \eqref{it:2_for_extending_span} also holds. 
The induction follows.

The resulting multisections, defined on $\oPM_\Gamma$ by pulling back from the space $\oPM_{\text{for}_{\text{marking}}(\Gamma)}$, satisfy the required properties, so item \eqref{it:single_Witten-span} of the proposition is proven.

The last item of the proposition, item \eqref{it:single_L_i}, is proven completely analogously to Proposition~3.49 of \cite{PST14}. The only comment is that, during the recursive process, some initial conditions involve defining the sections on partially stable components, an issue that did not appear in \cite{PST14}.  On such components, we set the section to be zero. This has no effect since the section of Witten's bundle has a prescribed, non vanishing behavior on such components. 
%
\end{proof}

We now return to the proofs of Lemmas~\ref{lem:a single section for Witten-closed step} and \ref{lemma:extension}.

\begin{proof}[Proof of Lemma \ref{lem:a single section for Witten-closed step}]
The claim is trivial when the dimension of $\CM_\Gamma$ is zero or when the rank of the Witten bundle is zero, so we assume that both are positive.
Write $N=\bigcup_{\Lambda\in\partial\Gamma}\TRAM_\Lambda.$
 Then $N$ is a normal crossings divisor in $\TRAM_\Gamma$, and on $N$, a multisection $s$ is defined by the second item: the second item defines a multisection on
$\coprod_{\Lambda\in\partial\Gamma}\TRAM_\Lambda$, which descends to $N$ by Observation \ref{obs:comp_of_Ass}.

Suppose first that the anchor does not have twist $r-1$.  By the first item and Lemma \ref{obs:trans_for_Ass}, the multisection $\bar{s}|_{\oCM_\Lambda^{1/r}}$ is transverse to zero for any
$\Lambda\in\partial\Gamma$.  We can thus extend it to a transverse multisection $\bar{s}\in C_m^\infty(\oCM_\Gamma^{1/r},\cW).$ 
This gives a multisection $s\in C_m^\infty(N',\cW)$ where $N'=N\cup\oCM_\Gamma^{1/r}\hookrightarrow\TRAM_\Gamma,$ is just $\oCM_\Gamma^{1/r}$ if the anchor is not Ramond, and a
normal crossing divisor in $\TRAM_\Gamma$ otherwise. If the anchor is not twisted $-1$ then $s$ satisfies the requirements of the lemma, and we are done. 
Otherwise recall the coherent section $u$ defined in Example \ref{ex:simple_coherent}. The multisection $s|_{N'}-u|_{N'}$ is a multisection of $\cW'\to N'.$ Extend it to a smooth multisection $s'\in C_m^\infty(\TRAM_\Gamma,\cW).$ 
Then $s'+u$ extends $s$ to a coherent multisection of $\TRAM_\Gamma$ which satisfies all the requirements.

If the anchor has twist $r-1$, then the multisection $s$, currently defined only on $N$, evaluates positively at the anchor over any point of $N$.  Extend $s$ to a multisection $s_0$ in neighborhood $U$ of $N$, small enough so that $s$ still evaluates positively at the anchor over any point of $U$. For any point $p\in \oCM_v^{1/r}$, the first part of Proposition \ref{prop:pointwise_positivity} allows one to construct a multisection $s_p$ that evaluates positively at the anchor over any point in a neighborhood $U_p$ of $p$. Cover $\oCM_v^{1/r}\setminus U$ by a finite number of such neighborhoods $U_{p_1},\ldots U_{p_m}$ that  do not intersect $N$.  Let $s_{i}$ be the multisection that corresponds to $p_i$, and let $\rho_0,\ldots, \rho_m$ be a partition of unity subordinate to the cover $U,U_{p_1},\ldots, U_{p_m}$.  Then $\sum_{i=0}^m \rho_i s_i$ extends $s$ and satisfies the requirements of the lemma.
\end{proof}

\begin{proof}[Proof of Lemma \ref{lemma:extension}]

Write $W=\bigcup_{\Lambda\in\partial\Gamma\setminus\partial^+\Gamma}\oPM_\Lambda$.

\noindent \textbf{Step $1$:}
Let $\Xi$ be a graph containing some internal edges or illegal boundary half-edges of twist zero, and let $u\in\CM_\Xi^{1/r}\subset\partial^+\oCM_\Gamma^{1/r}$ be a moduli point with corresponding $r$-spin surface $C_u$.  The first step is to construct, for each such $\Xi$ and $u$, a $\Xi$-neighborhood $U_u$ in $\oCM_\Gamma^{1/r}$, a $\Xi$-family of intervals $\{I_{u,h}\}_{h\in\Pos(\Xi)}$, and a $(U_u,\{I_{u,h}\}_h)$-positive multisection $s_u$ that restricts to $\zeta$ on $W\cap U_u$. (The na\"ive approach to the lemma would be to use standard arguments to show that one can extend the multisection to a neighborhood of $W$. However, due to noncompactness, this does not work directly; it only shows an extension to a set of the form $U\cap\oPM_\Gamma$, where $U$ is an open set containing $u$ in its closure.  This is why we first construct $(U_u,I_{u,h})$ as above, and then we explicitly construct a $(U_u,I_{u,h})$-positive extension of $\zeta$ to all $U_u$.)

To construct $U_u$, $I_{u,h}$, and $s_u$, let $\{e_1, \ldots, e_L\}$ be the set of edges of $\Xi$ that are either internal or such that their illegal half-edge has twist zero.  In the restriction of the universal curve to a small contractible $\Xi$-neighborhood $U'_u$ of $u$, one can number the nodes corresponding to $e_1,\ldots,e_L$ by $n_1,\ldots, n_L$.  Write $\Gamma^{(Q)}$ for the graph obtained by smoothing all edges not numbered $1,\ldots ,L$, as well as all edges of $Q,$ for $Q\subseteq [L].$  Using the assumptions of the lemma, one can write
$\zeta^{(Q)}=\zeta|_{\oPM_{\Gamma^{(Q)}}\cap U'_u}$
for any $Q\subsetneq [L]$.   Recalling notation \ref{nn:fGamma}, define $\Xi^{(Q)}\in\CB\Gamma^{(Q)}$ by $\Xi^{(Q)}=f_{\Gamma^{(Q)}}(\Xi).$

By the assumptions on $\zeta$, for each $Q\subsetneq [L]$ one has $\zeta^{(Q)}=F_{\Gamma^{(Q)}}^*\zeta^{\CB(Q)}$, where $\zeta^{\CB(Q)}$ is a restriction of a positive multisection of $\oPM_{\CB\Gamma^{(Q)}}.$ Thus, for any $Q\subsetneq [L]$, there is a $\Xi^{(Q)}$-neighborhood $U^{\CB(Q)}\subseteq\oCM^{1/r}_{\CB\Gamma^{(Q)}}$ of $F_{\Gamma^{(Q)}}(u)$ and a family of intervals $\{I^{\CB(Q)}_h\}_{h\in\Pos(\Xi^{(Q)})}$ for $U^{\CB(Q)}$ on which $\zeta^{\CB(Q)}$ evaluates positively. Pulling back to $\oCM_{\Gamma^{(Q)}}^{1/r}$, we can find a $\Xi$-neighborhood $U^{(Q)}$ of $u$ inside $F_{\Gamma^{Q}}^{-1}(U^{\CB(Q)})\subseteq \oCM_{\Gamma^{(Q)}}^{1/r},$ and a family of intervals \[\{{I}^{(Q)}_{h'}(u')=\NNN(\phi_{\Gamma^{(Q)}}^{-1}(I_{h'}^{\CB(Q)}(F_{\Gamma^{(Q)}}(u'))))\}_{h'\in\Pos(\Xi^{(Q)})},\] where $\phi_{\Gamma^{(Q)}}$ is the map from Remark \ref{rmk:F_in_surface_level} and $\NNN$ is the normalization map. For any $u'\in U^{(Q)}$ and any $h$, the set ${I}^{(Q)}_h(u')$ is either an interval or a union of intervals inside $\Sigma_{u'}$; see Figure \ref{fig:for_ext}.  The endpoints of these intervals vary smoothly 
on the universal curve restricted to $U^{(Q)}$.  By replacing with a smaller $\Xi$-neighborhood $U'_u$ of $u$, we may assume that $U'_u\cap\oCM_{\Gamma^{(Q)}}\subset U^{(Q)}.$

The simple yet crucial observation is that if $h\in\Pos(\Xi)$ is mapped by $f_{\Gamma^{(Q)}}$ to $h^{(Q)}:=f_{\Gamma^{(Q)}} (h)\in \Pos(\Xi^{(Q)})$, then for any $\Sigma$ in $U^{(Q)}\cap\oCM_{\Xi^{(Q)}}^{1/r}$ the node $n_h=n_h(C)$ belongs to an interval contained in ${I}^{(Q)}_{h^{(Q)}}(C).$ In Figure \ref{fig:for_ext}, both cases, when $f_{\Gamma^{(Q)}}$ is bijective or not, are illustrated.

\begin{figure}[t]
\centering

\begin{subfigure}{.3\linewidth}
\begin{tikzpicture}[scale=0.85]

\tkzDefPoint(0,0){O}
\tkzDefPoint(1,0){A}\tkzDefPoint(0,1){B}\tkzDefPoint(0,-1){C}
\tkzDefPoint(2,1){A'}\tkzDefPoint(2,-1){A''}\tkzDefPoint(2,0){OA}
\tkzDefPoint(-1,2){B'}\tkzDefPoint(1,2){B''}\tkzDefPoint(0,2){OB}
\tkzDefPoint(-0.707,-0.707+2){BL}\tkzDefPoint(0.707,-0.707+2){BR}
\tkzDefPoint(-1,-2){C'}\tkzDefPoint(1,-2){C''}\tkzDefPoint(0,-2){OC}
\tkzDefPoint(-0.707,0.707-2){CL}\tkzDefPoint(0.707,0.707-2){CR}
\tkzDefPoint(0.707,0.707){D}\tkzDefPoint(-0.707,0.707){E}
\tkzDefPoint(0.707,-0.707){D'}\tkzDefPoint(-0.707,-0.707){E'}

\tkzDefPoint(-0.857,0.557){F}\tkzDefPoint(-1.457,1.457){F'}\tkzDefPoint(-1.957,0.457){F''}
\tkzCircumCenter(F,F',F'')\tkzGetPoint{OF}
\tkzDrawArc(OF,F'')(F')

\tkzDefPoint(-0.857,-0.557){G}\tkzDefPoint(-1.457,-1.457){G'}\tkzDefPoint(-1.957,-0.457){G''}
\tkzCircumCenter(G,G',G'')\tkzGetPoint{OG}
\tkzDrawArc(OG,G')(G'')

\draw (0,0) circle [line width = 0.5pt, radius=1];
\tkzDrawArc(OA,A')(A'')
\tkzDrawArc(OB,B')(B'')
\tkzDrawArc(OC,C'')(C')
\tkzDrawArc[line width = 3pt](O,D)(E);
\tkzDrawArc[line width = 3pt](O,E')(D');
\tkzDrawArc[line width = 3pt](OB,BL)(BR);
\tkzDrawArc[line width = 3pt](OC,CR)(CL);

\node at (0,0.75) {$n_{h'}$};
\node at (0,1.25) {$n_{h}$};
\node at (0,-0.75) {$n_{f}$};
\node at (0,-1.25) {$n_{f'}$};
\node at (-1.2,0.65){$n_3$};
\node at (-1.2,-0.65){$n_2$};
\node at (1.25,0){$n_1$};

\end{tikzpicture}
\end{subfigure}
\hspace{0.25cm}
\begin{subfigure}{0.6\linewidth}
\begin{tikzpicture}[scale=0.85]

\tkzDefPoint(0,0){O}
\tkzDefPoint(1,0){A}\tkzDefPoint(0,1){B}\tkzDefPoint(0,-1){C}
\tkzDefPoint(2,1){A'}\tkzDefPoint(2,-1){A''}\tkzDefPoint(2,0){OA}
\tkzDefPoint(-1,2){B'}\tkzDefPoint(1,2){B''}\tkzDefPoint(0,2){OB}
\tkzDefPoint(-0.707,-0.707+2){BL}\tkzDefPoint(0.707,-0.707+2){BR}
\tkzDefPoint(-1,-2){C'}\tkzDefPoint(1,-2){C''}\tkzDefPoint(0,-2){OC}
\tkzDefPoint(-0.707,0.707-2){CL}\tkzDefPoint(0.707,0.707-2){CR}
\tkzDefPoint(0.707,0.707){D}\tkzDefPoint(-0.707,0.707){E}
\tkzDefPoint(0.707,-0.707){D'}\tkzDefPoint(-0.707,-0.707){E'}

\tkzDefPoint(-0.857,0.557){F}\tkzDefPoint(-1.457,1.457){F'}\tkzDefPoint(-1.957,0.457){F''}
\tkzCircumCenter(F,F',F'')\tkzGetPoint{OF}
\tkzDrawArc(OF,F'')(F')

\tkzDefPoint(-0.857,-0.557){G}\tkzDefPoint(-1.457,-1.457){G'}\tkzDefPoint(-1.957,-0.457){G''}
\tkzCircumCenter(G,G',G'')\tkzGetPoint{OG}
\tkzDrawArc(OG,G')(G'')

\draw[gray, line width = 3pt] (0,0) circle [radius=1];
\tkzDrawArc(OA,A')(A'')
\tkzDrawArc(OB,B')(B'')
\tkzDrawArc(OC,C'')(C')
\tkzDrawArc[line width = 3pt](O,D)(E);
\tkzDrawArc[line width = 3pt](O,E')(D');
\tkzDrawArc[line width = 3pt](OB,BL)(BR);
\tkzDrawArc[line width = 3pt](OC,CR)(CL);

\node at (0,0.75) {$n_{h'}$};
\node at (0,1.25) {$n_{h}$};
\node at (0,-0.75) {$n_{f}$};
\node at (0,-1.25) {$n_{f'}$};
\node at (-1.2,0.65){$n_3$};
\node at (-1.2,-0.65){$n_2$};
\node at (1.25,0){$n_1$};

\tkzDefPoint(4,0){rO}
\tkzDefPoint(5,1){r1}\tkzDefPoint(3,1){r2}
\tkzCircumCenter(rO,r1,r2)\tkzGetPoint{uO}
\tkzDrawArc(uO,r2)(r1)
\tkzDefPoint(0.707+4,-0.707+1){r1'}\tkzDefPoint(-0.707+4,-0.707+1){r2'}
\tkzDrawArc[line width = 3pt](uO,r2')(r1')

\tkzDefPoint(5,-1){r3}\tkzDefPoint(3,-1){r4}
\tkzCircumCenter(rO,r3,r4)\tkzGetPoint{dO}
\tkzDrawArc(dO,r3)(r4)
\tkzDefPoint(0.707+4,0.707-1){r3'}\tkzDefPoint(-0.707+4,0.707-1){r4'}
\tkzDrawArc[line width = 3pt](dO,r3')(r4')

\node at (4,0.25) {$n_{h^{(Q)}}$};
\node at (4,-0.25) {$n_{h^{'(Q)}}$};

\end{tikzpicture}
\end{subfigure}

\caption{A nodal disk $C_u$ in which the half-nodes opposite $n_1$, $n_2$, and $n_3$ are forgotten when one passes to $\CB C_u$, and $h,h',f,f'\in\Pos$. On the left, we consider the case $1,2,3\in Q$, so that the intervals $I_h^{(Q)},I_f^{(Q)},I_{h'}^{(Q)}$, and $I_{f'}^{(Q)}$ (marked in bold) are pullbacks of intervals $I_h^{\CB(Q)},I_f^{\CB(Q)},I_{h'}^{\CB(Q)}$, and $I_{f'}^{\CB(Q)}$.  On the middle and right, we consider the case $1,2,3\notin Q$; the right side shows part of the nodal disk obtained by the base operation, in which $n_h$ and $n_f$ are mapped to the same $n_{h^{(Q)}}$ and $n_{h'}$ and $n_{f'}$ are mapped to the same $n_{h^{'(Q)}}$. The intervals $I_{h^{(Q)}}^{\CB(Q)}$ and $I_{h^{'(Q)}}^{\CB(Q)}$ (marked in bold on the right) are pulled back to $I_h^{(Q)}=I_f^{(Q)}$ and $I_{h'}^{(Q)}=I_{f'}^{(Q)}$, respectively, in the middle.}
\label{fig:for_ext}
\end{figure}

%

Thus, we can find a family of non-intersecting intervals $I_h(u)\subset\partial C_u$ containing $n_h(u)$ and no other special point, such that $I_h(u)\subseteq I_{h^{(Q)}}^{(Q)}(u)$ for all $h\in H^+(\Xi)$ and all $Q\subset L$.  We extend smoothly the family $\{I_h(u)\}_{h\in H^+(\Xi)}$ to a $\Xi$-family of intervals $\{I_{u,h}\}_{h\in\Pos(\Xi)}$ on $U'_u$.  By continuity of the endpoints of the intervals $I_{h}^{(Q)}$ with respect to the topology of the universal curve, there exists a contractible $\Xi$-neighborhood $U_u\subseteq U'_u$ of $u$ such that, for all $u' \in U_u\cap\oPM_{\Gamma^{(Q)}}\subseteq U_u\cap W$, we have $I_{u,h}(u')\subseteq I_{h^{(Q)}}^{(Q)}(u')$.

We now show that one can extend the given multisection $\zeta|_{W\cap U_u}$ to all $U_u\cap\oPM_\Gamma$.  By the definition of $I_{h,u}$ and $U_u$, the multisection $\zeta|_{W\cap U_u}$ evaluates positively at $\{I_{h,u}\}$.  We can extend $\zeta|_{W\cap U_u}$ to a multisection $s'$ on a neighborhood $V_u\subseteq U_u$ of $W\cap U_u$ such that $s'$ evaluates positively at each $I_{u,h}(u')$ for $u'\in V_u$ and $ h\in\Pos(\Xi)$. Let $V'_u$ be another neighborhood of $W\cap U_u$ whose closure is contained in $V_u.$
Let $\rho'_u$ be a smooth nonnegative function that equals $1$ on $V'_u$ and $0$ on $U_u\setminus V_u.$

Denote by $\hat\Xi$ the graph obtained by smoothing the internal edges of $\Xi$ and the edges that have an illegal half-edge of twist zero; then $\Pos(\Xi)=\Pos(\hat\Xi)$ canonically.  Let $\overline{W}$ be the closure of $W$ in $\M_{\Gamma}^{1/r}$, and let $U_{u,1},U_{u,2},\ldots\subset U_u\setminus \overline{W}$ be a locally finite cover of $U_u\setminus \overline{W}$ by contractible $\hat\Xi$-sets.

By Proposition \ref{prop:pointwise_positivity}, we can construct sections $s'_{u,i}\in C^\infty(\cW\to U_{u,i})$ that are positive with respect to $(U_{u,i},\{I_{u,h}\}_{h\in\Pos(\hat\Xi)}).$ We now take subsets $U'_{u,i}$ with $\overline{U'}_{u,i}\subset U_{u,i}$ that also cover $U_u\setminus \overline{W}$, and let $\rho_{u,i}$ be smooth nonnegative functions that equal $1$ on $U'_{u,i}$ and $0$ outside of $U_{u,i}$.  Then $s_u = \rho'_u s'+ \sum_{i\geq 1}\rho_{u,i}s'_{u,i}$ is defined on all $U_u$, is smooth and $(U_u,\{I_{u,h}\}_{h})$-positive, and restricts to $\zeta$ on $W\cap U_u$.  This completes the first step.

\vspace{0.25cm}

\noindent \textbf{Step $2$:}
Since $\bigcup_{\Lambda\in\partial\Gamma\setminus\partial^+\Gamma}\partial^+\oCM_\Lambda^{1/r}$ is compact, the previous step allows constructing tuples $(\Xi_j,V_{j},\{I_{j,h}\}_{h\in\Pos(\Xi_j)},s'_{j})$, with $j=1,\ldots, M$, where $V_{j}$ is a $\Xi_j$-set, $\bigcup_{\Lambda\in\partial\Gamma\setminus\partial^+\Gamma}\partial^+\oCM_\Lambda^{1/r}\subset \bigcup_{j=1}^M V_{j},$ and the smooth multisection $s'_{j}$ is $(V_{j},\{I_{j,h}\}_h)$-positive and agrees with $\zeta$ on $W\cap V_{j}.$ Using a partition of unity $\{\hat{\rho}_j\}_{j\in[M]}$ subordinate to $\{V_{j}\}_{j\in[M]}$, we construct a smooth multisection
$s_0 = \sum\hat{\rho}_j s'_{j}$ defined on $\hat{U}=\bigcup V_{j}$ that agrees with $\zeta$ on $\hat{U}\cap W.$

We claim that $s_0$ is positive. Indeed, consider $u\in \partial^+\oCM_\Gamma^{1/r}\cap \hat{U}$ with $u\in \CM_\Lambda^{1/r},$ where $\Lambda$ has at least one internal edge or illegal boundary half-edge of twist $0$.  Let $S_u$ be the set of indices $i$ with $u\in V_i$.  Then for any $i\in S_u$, the definition of a $\Xi_i$-set implies that $\Xi_i\in\partial^!\Lambda$.  Let $Z_u=\bigcap_{i\in S_u}V_{i},$ and for $u'\in Z_u$, write $I_h(u')=\bigcap_{i\in S_u}I_{i,h}(u').$ Note that for $h\in\Pos(\Lambda)$, the intervals $I_h(u)\neq\emptyset$, since each is the intersection of intervals that contain a specific node $n_h$.  Thus, these intervals are also nonempty in a neighborhood $Z'_u\subseteq Z_u$ of $u$. By the definition of $S_u$ and of $\Lambda$-sets, there is a neighborhood $Z^0_u$ of $u$ that does not meet $\text{Supp}(\rho_i)$ for any $i\notin S_u$.  Thus, on $Z'_u\cap Z^0_u$, each $s'_{i}$ that is defined evaluates positively at the intervals $I_h$, and hence so does $s_0=\sum\hat{\rho}_j s'_{j}$.

\vspace{0.25cm}

\noindent \textbf{Step $3$:}
We now construct, for any $\Lambda$ without internal edges or illegal boundary half-edges of twist zero and any point $u\in\CM_\Lambda^{1/r}\subseteq\partial^+\oCM_\Gamma^{1/r}$, a contractible $\Lambda$-neighborhood $U_u$, a $\Lambda$-family of intervals $\{I_{u,h}\}_{h\in\Pos(\Lambda)}$ for $U$, and a $(U_u,\{I_{u,h}\}_{h})$-positive section $s_u.$
This is straightforward: on the surface $\Sigma_u$, we draw intervals $I_{u,h}(u)$ that satisfy the requirements of a $\Lambda$-family of intervals at $u$.  We then extend these in a smooth way to $\Lambda$-family $I_{u,h}$ in a small enough contractible $\Lambda$-neighborhood $U_u$ of $u$ and use Proposition \ref{prop:pointwise_positivity} to construct sections $s_u\in C^\infty(U_u,I_u)$ that are $(U_u,I_u)$-positive.

By compactness of $\partial^+\oCM_\Gamma^{1/r}\setminus \hat{U}$ ,we can therefore find tuples $(\Lambda_i,U_i,I_i,s_i)$, for $i=1,\ldots ,N$, such that each $U_i$ is a contractible $\Lambda_i$-set for $\Lambda_i$ without internal edges or illegal half-edges of twist zero, each $I_i=\{I_{i,h}\}_{h\in \Pos(\Lambda_i)}$ is a $\Lambda_i$-family of intervals for $U_i$, each $s_i$ is $(U_i,I_i)$-positive, and
$\partial^+\oCM_\Gamma^{1/r}\setminus\hat{U}\subset\bigcup U_i$.

For $i=1,\ldots,N$, let $U'_i$ be an open set whose closure is contained in $U_i$, such that $\bigcup U'_i$ also covers $\partial^+\oCM_\Gamma^{1/r}\setminus \hat{U}$.  For each $i$, let $\rho_i$ be a smooth nonnegative function defined on $\hat{U}\cup\bigcup_{i=1}^n U_i$ that equals $1$ on $U'_i$ and equals $0$ outside $U_i$.  Let $\rho_0$ be a smooth nonnegative function such that $\hat U\setminus(\bigcup U'_i) \subset \text{Supp}(\rho) \subset \hat{U}$ and $\rho_0|_W=1$.  Then $\sigma'=\sum_{i=0}^N\rho_is_i$ is a multisection, defined on $W$ and on a neighborhood of $\oPM_\Gamma$, that extends $\zeta$.  In addition, $\sigma'$ is positive, as one can see by applying the same argument used for the positivity of $s_0$.

\vspace{0.25cm}

\noindent \textbf{Step $4$:}
The multisection $\sigma'$ is transverse on $W$ and positive near $\partial^+\oCM_\Gamma^{1/r}.$
Let $\widetilde{\Gamma}$ be a graph identical to $\Gamma$ but with injective labels, and let $\tilde{q}:\oCM_{\widetilde{\Gamma}}^{1/r}\to\oCM_\Gamma^{1/r}$ be the quotient map. Extend $\tilde{q}^*\sigma'$ transversally to all of $\oPM_{\widetilde{\Gamma}}$.  Symmetrize the extension of $\tilde{q}^*\sigma'$ with respect to the action of $\Aut(\Gamma)$ on $\cW\to\oPM_{\widetilde{\Gamma}}$, where symmetrization of a multisection is defined in \eqref{eq:average} in Appendix \ref{sec:ap_euler}. The result is a pullback $\tilde{q}^*\sigma$ of a multisection $\sigma$ of $\cW\to\oPM_{\Gamma}$.  Then $\sigma$ is the required global multisection, and item \eqref{it:extension_general} of the lemma is proved.

We now turn to items \eqref{it:extension_pos} and \eqref{it:extension_pos_moreover}. We need two observations.
\begin{obs}\label{obs:positive_section_k=0_smooth}
If $\Gamma$ is smooth and graded without boundary tails, then there exists $s_+\in C^\infty(\oPM_\Gamma,\cW)$ that vanishes on $\oPM_\Gamma\setminus\CM_\Gamma^{1/r}$ and, for any smooth $C\in\CM_\Gamma^{1/r}$, the section $(s_+)_C$ evaluates positively at $\partial\Sigma \subseteq C$.
\end{obs}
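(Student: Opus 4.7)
The plan is to construct $s_+$ in three steps: pointwise existence on each smooth disk via Lemma \ref{lem:surj_of_eval}, globalization using a nonnegative partition of unity (which preserves positivity), and multiplication by a cutoff function that vanishes on the boundary of $\oPM_\Gamma$. For the pointwise step, I would fix a smooth graded disk $C \in \CM_\Gamma^{1/r}$ and apply Lemma \ref{lem:surj_of_eval} with $a = 0$ boundary zeros and $b = \deg(|J|)/2$ internal zeros (each paired with its $\widetilde\phi$-conjugate), obtaining a nonzero section vanishing at exactly the chosen $2b$ points and hence nowhere on $\partial\Sigma$. The existence condition \eqref{eq:open_rank1} with no boundary tails, combined (for $r$ even) with the parity constraint \eqref{parity} at $|D|=0$, forces $\mathrm{rank}(\cW) = \deg(|J|)+1$ to be a positive odd integer; for $r$ odd the same conclusion follows directly from \eqref{eq:open_rank1}, since then $\sum a_i \equiv -1 \pmod r$. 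Either way $b = \deg(|J|)/2$ is a nonnegative integer and is an admissible choice for Lemma \ref{lem:surj_of_eval}. Since $\partial\Sigma$ is connected and the grading gives a continuous positive direction in $|J|^{\widetilde\phi}|_{\partial\Sigma}$, the nowhere-vanishing evaluation has constant sign on $\partial\Sigma$; rescaling by $\pm 1$ if necessary gives the desired positive $w_C$.

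For the globalization, extend each $w_C$ to a smooth section $\sigma_C$ of $\cW$ over a neighborhood $U_C \subseteq \oPM_\Gamma$ of $C$, shrinking $U_C$ if needed so that $(\sigma_C)_{C'}$ remains positive on $\partial\Sigma'$ for every smooth $C' \in U_C \cap \CM_\Gamma^{1/r}$. Refine $\{U_C\}_{C \in \CM_\Gamma^{1/r}}$ to a locally finite open cover of $\oPM_\Gamma$, adding if necessary open subsets of $\oPM_\Gamma \setminus \CM_\Gamma^{1/r}$ on which we take the local section to be $0$. Take a smooth nonnegative partition of unity $\{\rho_i\}$ subordinate to this cover and set $s := \sum_i \rho_i \sigma_i$. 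Because each nonzero $\sigma_i$ is positive on $\partial\Sigma$ of smooth disks with respect to the same grading, and the $\rho_i$ are nonnegative, each pointwise value $s_C$ is a nonnegative combination of vectors positive on $\partial\Sigma$; it is strictly positive because $\sum \rho_i(C) = 1$ forces some $\rho_i(C) > 0$ with $C$ in the corresponding positivity-producing $U_i$. Symmetrization under $\Aut(\Gamma)$ via the averaging argument at the end of the proof of Proposition \ref{prop:pointwise_positivity} preserves positivity, again because it is a nonnegative-coefficient average.

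Finally, construct a smooth nonnegative function $\rho: \oPM_\Gamma \to [0,1]$ with $\rho^{-1}(0) = \oPM_\Gamma \setminus \CM_\Gamma^{1/r}$: since $\oPM_\Gamma \setminus \CM_\Gamma^{1/r}$ is a normal-crossings union of boundary strata in the orbifold with corners $\oPM_\Gamma$, such a $\rho$ is built locally from products of boundary-defining functions and glued via a partition of unity, chosen to vanish to sufficiently high order along the boundary that $\rho \cdot s$ extends smoothly by zero across $\oPM_\Gamma \setminus \CM_\Gamma^{1/r}$. Setting $s_+ := \rho \cdot s$, we obtain a smooth section vanishing on $\oPM_\Gamma \setminus \CM_\Gamma^{1/r}$, while $(s_+)_C = \rho(C)\, s_C$ is positive on $\partial\Sigma$ for each smooth $C$ since $\rho(C) > 0$ and $s_C$ is positive there.

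The main obstacle is preserving positivity through the globalization: one must use a nonnegative partition of unity, ensure that each local section $\sigma_C$ is co-oriented with the grading (via the $\pm 1$ normalization), and choose a cutoff $\rho$ that is strictly positive on all of $\CM_\Gamma^{1/r}$. The no-boundary-tails hypothesis enters decisively in the pointwise step, guaranteeing via the parity constraints that $\deg(|J|)$ is even, so that taking no boundary zeros in Lemma \ref{lem:surj_of_eval} is allowed.
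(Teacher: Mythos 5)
Your proposal is correct and follows essentially the same route as the paper: use Lemma~\ref{lem:surj_of_eval} to find, for each smooth disk, a section with only internal zeros (hence nowhere-vanishing and of constant sign on the connected boundary circle, which can be normalized to the positive sign), globalize by a nonnegative partition of unity, and then multiply by a nonnegative cutoff vanishing precisely on $\oPM_\Gamma\setminus\CM_\Gamma^{1/r}$. The main difference is cosmetic: you verify explicitly, via \eqref{eq:open_rank1} and \eqref{parity} at $|D|=0$, that $\deg(|J|)$ is even so that $b=\deg(|J|)/2$ is admissible in Lemma~\ref{lem:surj_of_eval} with $a=0$; the paper leaves this implicit. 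You also assemble a smooth global section on all of $\oPM_\Gamma$ (padding the cover with zero sections away from $\CM_\Gamma^{1/r}$) before applying the cutoff, whereas the paper builds the section only over $\CM_\Gamma^{1/r}$ and relies on the rapid decay of the cutoff for smooth extension by zero; in your version the ``vanishing to high order'' clause is not actually needed, since your $s$ is already smooth up to the boundary, but this causes no harm.
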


Indeed, for any $C\in\CM_\Gamma^{1/r}$, one can find a vector $v_C\in\cW_C,$ thought of as a global section of $|J|$ over $C$, that evaluates positively on $\d\Sigma \subseteq C$ by choosing all of the zeroes of this section to be internal, using Lemma \ref{lem:surj_of_eval}, and multiplying the chosen section by $-1$ if needed. 
We can extend $v_C$ to a section $s_{C}$ defined in a neighborhood $U_C$ of $C$ with the same properties. From here, a standard partition of unity argument yields a global section $s_+\in\Gamma(\CM_\Gamma^{1/r},\cW)$ with the required positivity.  We then obtain the desired section by replacing $s_+$ with $\rho s_+$, where $\rho$ is a function on $\oPM_\Gamma$ that is positive on $\CM_\Gamma^{1/r}$ and vanishes quickly enough when approaching $\oPM_\Gamma\setminus\CM_\Gamma^{1/r}$, and then extending $\rho s_+$ to $\oPM_\Gamma\setminus\CM_\Gamma^{1/r}$ by zero.  This verifies Observation \ref{obs:positive_section_k=0_smooth}.
\begin{obs}\label{obs:second} If $\CB\Gamma$ has a component with no boundary tails, then for any $\Xi\in\partial^0\Gamma$, there is a component of $\CB\Xi$ with no boundary tails.
\end{obs}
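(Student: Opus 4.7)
The plan is to lift the hypothesized boundary-tail-free component of $\CB\Gamma$ to a connected subtree of $\Xi$, and then to extract from it a component of $\CB\Xi$ with the same property via a source argument on an oriented tree.

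Let $C$ be a component of $\CB\Gamma$ with no boundary tails, and let $\Gamma_C$ denote the corresponding connected subgraph of $\Gamma$, namely the connected component of $\Gamma$ once the edges in $E^0(\Gamma)$ have been cut. Because all boundary marked points of stable graded disks carry twist $r-2$ and are legal, the original boundary tails of $\Gamma$ are never erased by $\text{for}_{\text{illegal}}$; the hypothesis on $C$ therefore forces (i) $\Gamma_C$ to contain no original boundary tails of $\Gamma$, and (ii) every edge of $\Gamma$ leaving $\Gamma_C$ to lie in $E^0(\Gamma)$ with its illegal twist-zero half-edge on the $\Gamma_C$-side.

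Now, $\Gamma = d_S\Xi$, and the preimage in $\Xi$ of each vertex $v\in V(\Gamma_C)$ is a connected subgraph $\Xi_v\subseteq \Xi$ formed by the vertices of $\Xi$ that smooth to $v$, joined by the smoothed edges in $S$. Define $\Xi_C$ as the union of the $\Xi_v$ together with the edges of $\Xi$ that correspond, under $d_S$, to the un-smoothed edges of $\Gamma_C$. Since smoothing preserves boundary tails and the twist/alt decorations of surviving half-edges, properties (i) and (ii) transfer verbatim to $\Xi_C$. Moreover, the original edges of $\Gamma_C$ are not in $E^0(\Gamma)$, so their lifts are not in $E^0(\Xi)$ either; hence every edge internal to $\Xi_C$ that lies in $E^0(\Xi)$ is necessarily a smoothed edge in $S$. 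Crucially, the underlying graph of $\Xi$ is a disjoint union of trees, by the genus-zero hypothesis recalled in Section~\ref{sec:mod_and_bundle}, so the connected subgraph $\Xi_C$ is itself a tree.

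The components of $\CB\Xi$ meeting $\Xi_C$ are precisely the connected components of the forest obtained by removing $E^0(\Xi)\cap E(\Xi_C)$ from $\Xi_C$. Orient each removed edge from its illegal twist-zero half-edge toward its legal twist-$(r-2)$ half-edge; since $\Xi_C$ is a tree, the induced directed graph on its components is again a finite oriented tree, which always has a source. Choose a source $A$. By the source property, every internal $E^0(\Xi)$-edge of $\Xi_C$ incident to $A$ meets $A$ on its illegal twist-zero side; by (ii), the same holds for the external $E^0(\Xi)$-edges leaving $\Xi_C$ at $A$. Therefore $\detach_{E^0(\Xi)}$ attaches to $A$ only new illegal twist-zero tails, which are then erased by $\text{for}_{\text{illegal}}$, and $A$ carries no original boundary tails of $\Xi_C$. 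Hence $A$ is a component of $\CB\Xi$ with no boundary tails, completing the proof. The only nontrivial ingredient is the tree-ness of $\Xi_C$, which I expect to follow at once from the genus-zero conventions; every remaining step is direct bookkeeping of twists and alts under $d_S$ and $\detach_{E^0}$.
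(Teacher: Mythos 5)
Since the paper records this as an Observation with no written proof, there is no argument to compare against directly, but your approach is the natural one and in outline it is correct: restrict attention to the piece $\Xi_C$ of $\Xi$ lying over the boundary-tail-free component $C$ of $\mathcal{B}\Gamma$, orient the cut boundary edges from illegal twist-zero side to legal twist-$(r-2)$ side, and pick a source of the resulting oriented tree. The transfer of properties (i)--(ii) from $\Gamma_C$ to $\Xi_C$, the treeness coming from genus zero, and the source argument are all sound.

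There is one gap, however, and it is worth flagging because it is exactly the place where the $r$-spin structure actually enters. You assert that the components of $\mathcal{B}\Xi$ meeting $\Xi_C$ are precisely the connected pieces of $\Xi_C$ after deleting $E^0(\Xi)\cap E(\Xi_C)$, and you then conclude that the source $A$ ``is a component of $\mathcal{B}\Xi$.'' But $\text{for}_{\text{illegal}}$, like $\text{For}_{B',I'}$ in general, stabilizes: components that become unstable after forgetting the illegal twist-zero tails are \emph{contracted}, and a contracted piece is not a component of $\mathcal{B}\Xi$ at all. Your source $A$ has, by construction, no remaining boundary tails, so if its open vertex $v$ had no internal half-edges it would be fully unstable (not even partially stable) and would be removed, and your argument would produce nothing. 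To close this you need to rule out $v$ having only illegal twist-zero boundary half-edges and no internal half-edges. That is where the $r$-spin constraints do the work: such a $v$ would have $\rk\cW_v = \frac{0 - (r-2)}{r} = \frac{2-r}{r}$, which fails to be a nonnegative integer unless $r=2$, and for $r=2$ one gets $\rk\cW_v = 0$, so the parity condition of Proposition~\ref{prop:compatibility_lifting_parity}(\ref{it:lifting and parity_even}) forces $v$ to carry an odd (hence nonzero) number of legal boundary half-edges, contradicting the source property. Hence the open vertex of $A$ has at least one internal half-edge, so $A$ survives as a (possibly partially stable) component of $\mathcal{B}\Xi$. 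This step is not ``direct bookkeeping of twists and alts'' in the sense you claim, since it uses the rank formula \eqref{eq:bundle_rk} and the grading parity constraint, and it should be spelled out; once it is, the proof is complete.
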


Returning to the lemma, in order to prove item \eqref{it:extension_pos}, we proceed as in the proof of the previous item, only that after the third step, if $\Gamma$ has no boundary tails, we extend $\zeta$ further to a neighborhood of $\oPM_\Gamma\setminus\CM_\Gamma^{1/r}$ in $\oPM_\Gamma$.  By Observation \ref{obs:second} and the strong positivity assumption, if we restrict to a small enough neighborhood $U_0$, then for any $C\in U_0$ the extended section (which we also denote by $\sigma'$ as in the proof of Lemma \ref{lemma:extension}) evaluates positively on a nonempty subset of $\partial\Sigma$.  Let $s_+$ be the section constructed in Observation \ref{obs:positive_section_k=0_smooth}.
Multiply $\sigma'$ by a smooth function $\rho:\oPM_\Gamma\to[0,1]$ that equals $0$ outside of $U_0$ and equals $1$ on $\partial\oCM_\Gamma^{1/r}\cap\oPM_\Gamma$, and extend the result to a global section $\sigma''$ that equals zero outside of $U_0$.  Then $\sigma=s_++\sigma''$ is the required extension.

The proof of prove item \eqref{it:extension_pos_moreover} is similar to the previous case, and we use the same notations. We extend $\zeta$ to $U_0$ as above, denoting also the result by $\zeta$. Again, by the assumptions and Observation \ref{obs:second}, for $U_0$ small enough, the requirement regarding the intersections between the positivity loci of the branches of $s_C$ and $\zeta_C$ holds, since the locus in $\partial\Sigma$ on which a given branch of a section is positive is a union of open intervals. Defining $\rho,\sigma'',s_+$, and $\sigma$ as above, we see that the resulting $\sigma$ satisfies the modified requirements.
%
\end{proof}

\subsection{Proofs of Lemma \ref{lem:existence} and Proposition \ref{prop:no_zeroes_for_Witten_without_bdry_markings}}\label{subsec:constructions_bc_cons}
\begin{thm}\label{thm: hirsch}
Let $E\to M$ be an orbifold vector bundle over a smooth orbifold with corners, and let $V = \R_{+}^n.$ Fix smooth multisections $s_0,\ldots,s_n\in C_m^\infty(M,E)$, and let $F: V \to C_m^\infty(M,E)$ be the map
\[
(\lambda_{i})_{i\in [n]}\to F_\Lambda = s_0 + \sum \lambda_i s_i.
\]
Denote by $p_M:V\times M\to M$ the projection. If the multisection
\[
F^{ev}\in C^\infty_m(V\times M, p_M^*E), \qquad
F^{ev}\left(\lambda,x\right)= F_\lambda\left(x\right),
\]
is transverse to zero, then the set $\{v\in V | F_v \pitchfork 0\}$ is residual.
\end{thm}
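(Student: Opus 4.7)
The strategy is the classical parametric transversality argument (Sard--Smale), adapted to the orbifold-with-corners multisection setting of Appendix~\ref{sec:ap_euler}. The plan is to study the projection from the total zero locus of $F^{ev}$ onto $V$, and to identify its regular values with the parameters $v$ for which $F_v\pitchfork 0$.

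First, I would consider the zero locus
\[
Z := (F^{ev})^{-1}(0) \subseteq V \times M.
\]
By the hypothesis $F^{ev}\pitchfork 0$, each local branch of $F^{ev}$ cuts out, near every one of its zeros, a smooth suborbifold (with corners) of codimension $\rk E$ in $V\times M$. Thus $Z$ is locally a finite union of smooth pieces $Z^{(i)}$, one for each local branch of $F^{ev}$. Working branch-by-branch is safe because $C_m^\infty$ has only countably many branches on a paracompact orbifold, and a countable intersection of residual sets is residual.

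Next, I would study the projection $\pi: Z \to V$, $(\lambda,x)\mapsto \lambda$, restricted to each branch $Z^{(i)}$. The key linear algebra observation is the following claim: for $(\lambda,x)\in Z$ and any local branch of $F^{ev}$ vanishing at $(\lambda,x)$,
\[
d\pi|_{(\lambda,x)}\colon T_{(\lambda,x)}Z^{(i)} \longrightarrow T_\lambda V
\text{ is surjective} \;\iff\; dF_\lambda|_x\colon T_x M \to E_x \text{ is surjective.}
\]
To prove this, I would observe that $T_{(\lambda,x)}Z^{(i)} = \ker dF^{ev}|_{(\lambda,x)}$, where
\[
dF^{ev}|_{(\lambda,x)}(\mu,v) = \sum_{i=1}^n \mu_i\, s_i(x) + dF_\lambda|_x(v).
\]
Given surjectivity of $dF_\lambda|_x$, every $\mu \in T_\lambda V$ can be lifted by solving $dF_\lambda|_x(v) = -\sum \mu_i s_i(x)$, giving surjectivity of $d\pi$. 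Conversely, given surjectivity of $d\pi$ and using $dF^{ev}\pitchfork 0$ (so that $dF^{ev}|_{(\lambda,x)}$ is itself surjective onto $E_x$), any $e \in E_x$ can be written as $dF^{ev}(\mu,v)$ for some $(\mu,v)$; choose $(\mu,v')\in\ker dF^{ev}$ with the same $\mu$-component, and then $dF_\lambda|_x(v-v')=e$.

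Finally, I would apply Sard's theorem for smooth maps between orbifolds with corners to each $\pi|_{Z^{(i)}}$. This is a standard extension: pass to orbifold charts, extend across the corners (using the fact that smooth functions on $\mathbb{R}_+^k$ extend smoothly to a neighborhood in $\mathbb{R}^k$), and invoke the classical Sard theorem in Euclidean space, then patch via a countable cover. This yields that the regular values of each $\pi|_{Z^{(i)}}$ form a residual subset of $V$. Taking the countable intersection over all local branches $i$ shows that the set of $v \in V$ for which every branch of $F_v$ is transverse to $0$—that is, $\{v\in V\mid F_v\pitchfork 0\}$—is residual.

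The main technical obstacle is not the linear algebra, which is the standard Sard--Smale computation, but rather the bookkeeping for multisections and the orbifold-with-corners Sard theorem. One must verify that the local branch structure of $F^{ev}$ is compatible with that of $F_\lambda$ (i.e.\ that branches of $F^{ev}$ near $(\lambda,x)$ restrict to branches of $F_\lambda$ near $x$), and that the isotropy group actions on local branches do not disrupt the argument. With the multisection conventions of Appendix~\ref{sec:ap_euler}, both issues are handled locally and the countability of the branch covering preserves residuality.
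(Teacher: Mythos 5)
Your proposal is correct and takes essentially the same route as the paper: the paper offers no independent argument but simply states that this is the orbifold analogue of \cite[Theorem 3.52]{PST14} with an identical proof, and that proof is precisely the parametric (Sard--Smale) transversality argument you sketch, carried out branch-by-branch for multisections on orbifolds with corners. Your linear-algebra identification of regular values of $\pi\colon Z\to V$ with parameters $\lambda$ at which $F_\lambda\pitchfork 0$ is the core of that argument, and your remarks on countability of branches and the corner-stratified Sard theorem are exactly the bookkeeping that the cited reference handles.
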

This is the orbifold analogue of \cite[Theorem 3.52]{PST14}; the proof is identical.

\begin{proof}[Proof of Lemma \ref{lem:existence}]
Let $s$ be a special canonical multisection of $\cW$ with the properties of Proposition \ref{prop:a_single_section_for_Witten_or_L_i}, \eqref{it:single_Witten}.  Denote its components by $(s^v)_{v\in\mathcal{V}(\Gammar)}$.  Then $s$ is positive on a set $U_+$ of the form given in \eqref{eq:U_+}, hence is nonvanishing in a neighborhood $U'_+$ of $U_+$.  In addition, let
\begin{equation}
\label{eq:ijk}
w_{ijk} \in \CS_i, \;\;  1 \leq i \leq l, \;\; 1 \leq j \leq d_i, \;\; 1 \leq k \leq m_{ij},
\end{equation}
be a finite set of special canonical multisections of the $j$th copy of $\CL_i,$ which together span the fibers of $\CL_i$ at any point of the compact $\oPMr\setminus U'_+$. The existence of these multisections is guaranteed by Proposition \ref{prop:a_single_section_for_Witten_or_L_i},  \eqref{it:single_L_i}. Set $J=\{ijk\}_{i,j,k},\;\;V= (\R_{+})^J$, where $i$, $j$, and $k$ range over the values in \eqref{eq:ijk}.

For any $v\in \mathcal{V}(\Gammar)$ and any set $K$ as in the statement of the lemma, write
\[
J_{v,K} = \{abc \; | \; ab\in K, ~c\in [m_{ab}]\}\subseteq J.
\]
Apply Theorem~\ref{thm: hirsch} with
$
M = \CM_v^{1/r}, ~ E = \cW_v\oplus\bigoplus_{ab\in K}\CL_a\to\CM_v^{1/r},$
\[
V_{v,K} = \R_+^{J_{\Lambda,K}},~F = (F_{v,K})_{\lambda} = \bar{s^v}+\sum_{\alpha \in J_{\Lambda,K}} \lambda_{\alpha} w^v_\alpha, ~ \lambda =\{\lambda_{\alpha}\}_{\alpha \in J_{\Lambda,K}} \in V_{\Lambda,K},
\]
Observe that, in the notation of Theorem \ref{thm: hirsch}, $F_{v,K}^{\ev}\pitchfork0.$ Indeed, at any zero $(\lambda,u)\in V_{v,K}\times\CM_v^{1/r}$, the derivatives of $F_{v,K}^{\ev}$ in the $\CM_v^{1/r}$-directions span $\cW_u$ by the assumption on $s$, while those in the $V_{v,K}$-direction span $\bigoplus_{ab\in K}\CL_a$.

Set $
X_{v,K} = \{ \lambda \in V_{\Lambda,K}| (F_{\Lambda,K})_\lambda \pitchfork 0\}.
$
Then
$X_{v,K}$ is residual by Theorem~\ref{thm: hirsch}.  Moreover, if a$p_{v,K} : V \to V_{v,K}$ denotes the projection map, then $X = \bigcap_{v,K} p_{v,K}^{-1}(X_{v,K})$ is residual. Write $s_{ij}=s_{ij;\lambda} = \sum_{k\in[m_{ij}]}\lambda_{ijk}w_{ijk},$ and $\mathbf{s}=F_\lambda$
for some $\lambda\in X$.  Then for any abstract vertex $v\in\mathcal{V}(\Gammar)$ and any set $K$, we have
\[
\bar{s}^v\oplus\bigoplus_{ab\in K} s_{ab;\lambda}^v = (F_{v,K})_\lambda \pitchfork 0.
\]

If \eqref{eq:open_rank_with_psi} holds, then for any $\Lambda\in\partial^0\Gammar$ there is an abstract vertex $v$ with
\[\rk_\R (\cW_v)+2\sum_{i\in I(v)}d_i > \dim_\R(\CM_v^{1/r}).\]
By choosing $K = \bigcup_{i\in I(v)}{\{i\}\times[d_i]},$ transversality shows that $\bar{s}^v\oplus\bigoplus_{ab\in K} s_{ab;\lambda}^v$ does not vanish in $\CM_v^{1/r}$, so $\mathbf{s}$ does not vanish in $\CM_\Lambda^{1/r}.$
\end{proof}
%
%
\begin{lemma}\label{lem:exist_and_homotopy_inv_strongly_pos}
For any smooth graded open graph $\Gamma$, there exist global strongly positive multisections. Moreover, for any two such multisections $s_0$ and $s_1$, there is a homotopy $H\in C_m^\infty(\oPM_\Gamma\times[0,1],\cW)$ with $H(-,i)=s_i$ for $i=0,1$ such that every $s_t$ is strongly positive.
\end{lemma}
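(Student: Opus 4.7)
Existence is immediate from item \eqref{it:single_Witten_strong_pos} of Proposition~\ref{prop:a_single_section_for_Witten_or_L_i}, which produces a transverse special canonical strongly positive multisection of $\cW\to\oPM_\Gamma$. For the homotopy statement, the naive linear path $(1-t)s_0+ts_1$ may fail to be strongly positive, because for a component $\Xi_i$ of $\CB\Lambda$ with no boundary tails, a branch of $s_0$ and a branch of $s_1$ could be positive on disjoint arcs of $\partial(\CB\Sigma^{\Xi_i})$, so their convex combination could vanish throughout those arcs. The fix is to route the homotopy through a carefully chosen auxiliary strongly positive multisection $\hat s$.

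Let $s=s_0\uplus s_1$, which is itself strongly positive since each of its branches is a branch of one of the (strongly positive) multisections $s_0$ or $s_1$. Apply the ``moreover" clause of item \eqref{it:single_Witten_strong_pos} of Proposition~\ref{prop:a_single_section_for_Witten_or_L_i} to $s$, obtaining a strongly positive multisection $\hat s$ with the property that, for every $C\in\oPM_\Gamma$ and every local branch $\hat s^a_C$, the subset of $\partial\Sigma$ on which $\hat s^a_C$ is positive meets the positivity locus of each local branch of $s_0|_C$ and of $s_1|_C$. I would then define the piecewise-linear homotopy
\begin{equation*}
H(p,t)=\begin{cases}(1-2t)\,s_0(p)+2t\,\hat s(p),& t\in[0,1/2],\\[2pt] (2-2t)\,\hat s(p)+(2t-1)\,s_1(p),& t\in[1/2,1].\end{cases}
\end{equation*}
For each fixed $t\in[0,1/2]$, every branch of $H(-,t)$ has the form $(1-2t)\,s_0^i+2t\,\hat s^j$, and on the (nonempty) intersection of the positivity loci of $s_0^i$ and $\hat s^j$ this positive linear combination is itself positive; the same argument handles $t\in[1/2,1]$. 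Hence every time slice is strongly positive. Reparametrizing $t$ by a smooth monotone bijection $[0,1]\to[0,1]$ that is constant to all orders at $t=1/2$ smooths $H$ at the knot without altering any time slice or the endpoint values. Canonicity of every time slice follows from Observation~\ref{obs:sum_of_canonical}.

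The main technical point to verify will be that the ``moreover" property of Proposition~\ref{prop:a_single_section_for_Witten_or_L_i}, which is phrased in terms of intersecting positive loci on $\partial\Sigma\subseteq C$, really produces the version of strong positivity demanded by Definition~\ref{def:strongly_pos}, phrased in terms of $\partial(\CB\Sigma^{\Xi_i})$. This should reduce to the fact that $s_0,s_1,\hat s$ are all canonical and hence pulled back from $\oPM_{\CB\Lambda}$, so that their projections onto $\cW_{\Xi_i}$ factor through the normalization map $\widehat C'\to\CB C$ of Remark~\ref{rmk:F_in_surface_level}; positivity of a branch on an arc of $\partial\Sigma$ then transfers to positivity of the corresponding downstairs branch on the image arc in $\partial(\CB\Sigma^{\Xi_i})$, and intersections of positivity loci pass through the pullback unchanged.
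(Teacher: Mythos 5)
Your proof is correct and matches the paper's argument: both obtain existence from item \eqref{it:single_Witten_strong_pos} of Proposition~\ref{prop:a_single_section_for_Witten_or_L_i}, and both build the homotopy by applying the ``moreover'' clause of that item to $s_0\uplus s_1$ and routing through the resulting $\hat s$. The only cosmetic difference is that the paper uses the single smooth formula $H(-,t)=\rho_0(t)s_0+\rho_1(t)s_1+t(1-t)\hat s$, with bump functions $\rho_0,\rho_1$ supported in $[0,1/3]$ and $[2/3,1]$ and equal to $1$ at the respective endpoints, thereby avoiding your piecewise-linear concatenation and its smoothing-by-reparametrization step.
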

Lemma \ref{lem:exist_and_homotopy_inv_strongly_pos} implies Proposition \ref{prop:no_zeroes_for_Witten_without_bdry_markings}: when $k=0$, for every $\Lambda\in\partial^0\Gamma\cup\{\Gamma\},$ at least one open component of $\CB\Lambda$ has no boundary tails. The strong positivity, Definition \ref{def:strongly_pos}, assures that the multisection does not vanish on $\CM_\Lambda^{1/r}$.
\begin{proof}[Proof of Lemma \ref{lem:exist_and_homotopy_inv_strongly_pos}]
The existence follows from Proposition \ref{prop:a_single_section_for_Witten_or_L_i}, Item \eqref{it:single_Witten_strong_pos}.
The ``moreover" part follows from the ``moreover" part of that item, applied to $s=s_0\uplus s_1$ ($\uplus$ is defined in Definition \ref{def:union_of_sections}). Denote the multisection defined in Item~\eqref{it:single_Witten_strong_pos} by $\hat{s},$ and let $\rho_\epsilon:[0,1]\to[0,1]$ satisfy $\text{Supp}(\rho_0)=[0,\frac{1}{3}],\text{Supp}(\rho_1)=[\frac{2}{3},1]$, and $\rho_\epsilon(\epsilon)=1.$ Then the following homotopy satisfies our requirements:
$H(-,t)=\rho_0(t)s_0+\rho_1(t)s_1+t(1-t)\hat{s}.$
\end{proof}

\subsection{Proofs of Lemma \ref{lem:partial_homotopy} and Theorem \ref{thm:int_numbers_well_defined}}\label{subsec:homotopies}
\begin{proof}[Proof of Lemma \ref{lem:partial_homotopy}]
Let $\mathbf{s}_1$ be the projection of $\mathbf{s}$ to $E_1$, and extend $\mathbf{s}$ and $\mathbf{r}$ to a neighborhood $Y$ of $V_+.$ By the proof of Observation \ref{obs:sum_of_canonical} and the definition of canonical multisections, there is an open subset $Y_+$ of $Y$ containing $V_+$,
such that $Y_+ = Y_0\cup Y_1$ for $Y_0$ and $Y_1$ which satisfy:
\begin{enumerate}
\item\label{it:Y0}
All branches of the $\cW$-components of both $\mathbf{s}$ and $\mathbf{r}$ are positive with respect to the same families of intervals in an open set containing $Y_0.$
\item\label{it:Y1}
For any $C\in Y_1$, there is a point $p\in\partial\Sigma \subseteq C$ such that all branches of $\mathbf{s}$ and $\mathbf{r}$ evaluate positively at $p.$
\end{enumerate}
($Y_1$ is a neighborhood of the \stronglypositive boundary strata.)  By shrinking $Y_+$ if needed, we assume that $\oPMr\setminus Y_+$ is a compact orbifold with corners.
Let $Y'_+$ be another open set satisfying the same properties, and $\overline{Y'_+}\subset Y_+.$

Take a finite set of multisections $w_j$ of $E_2\to\oPMr$, where $1 \leq j \leq m$ and $
\text{Supp}(w_j)\subseteq\oPMr\setminus Y_+'$, such that the multisections $w_j$ span every fiber of $E_2\to \partial\oPMr\setminus Y_+,$ that is for any choice of a local branch, for each $w_1,\ldots, w_m$, the branches span the fiber.  Assume, furthermore, that the multisection $w_j$ projects to zero in all but one direct summand of $E_2$ 
and that $w_j$ is canonical if this direct summand is $\CL_i$, whereas $w_j$ is pulled back from the base (Definition \ref{def:canonical for Witten}) if this direct summand is $\cW$.  Such multisections exist by Proposition \ref{prop:a_single_section_for_Witten_or_L_i}, \eqref{it:single_Witten-span} and \eqref{it:single_L_i}, and the compactness of $\oPMr\setminus Y_+$.

Let $\pi : \oPMr\times [0,1] \to \oPMr$ be the projection, 
and set, for $\lambda \in \R_+^m$
\[
H_\lambda(p,t) = 
(1-t)\mathbf{s}(p)+t\mathbf{r}(p)+ t(1-t)\sum \lambda_i w_i, ~p \in Y_+\cup\partial\oPMr, t\in [0,1] .
\]
By positivity, $H_\lambda$ is nowhere-vanishing on $Y_+'\times[0,1]$, and in particular transverse, for each $\lambda$. For $\lambda$ in a small enough neighborhood $N$ of $0 \in \R^m$, the multisection $H_\lambda(p,t)$ is also nonzero on $ Y_+\times[0,1]$.  Apply Theorem~\ref{thm: hirsch} to
\[
 M= \partial^0\oPMr\times \left(0,1\right),~E = \pi^*E|_{\partial^0\oPMr}, ~ V= N,~F=H.
\]
For $p\notin Y_+$, the derivatives of $H^{\ev}$ in directions tangent to $\partial\oCMr$ span the fiber $(E_1)_p$ whenever $\mathbf{s}_1(p)$ vanishes, by assumption \ref{it:id_and_trav} of our current lemma. Since the multisections $w_j$ span $(E_2)_p,$ the derivatives of $H^{\ev}$ in the $\R^m$-directions span $(E_2)_p$ for all $p \in \partial^0\oPMr\setminus Y_+.$ Thus, $H^{\ev}|_{\R_+^m\times\partial\oPMr}\pitchfork 0$, and by Theorem \ref{thm: hirsch}, the set $X = \{\lambda\in V|~H_\lambda \pitchfork 0\}$ is residual.

For any $\Gamma\in A,$ let $E_\Gamma \to \CM_{\CB\Gamma}$ be the bundle induced by $E$ on $\CM_{\CB\Gamma},$ so that
$F_\Gamma^* E_\Gamma = E.$
Write $
M_\Gamma = \CM_{\CB\Gamma} \times (0,1),~\text{and}~E_\Gamma = \pi_\Gamma^*E_\Gamma,
$
where $\pi_\Gamma:M_\Gamma \to\CM_{\CB\Gamma}$ is the projection.
By the definitions of the $\{w_j\}$ and of $A$, there exists $H^{\CB\Gamma}:\R_+^m\to C^\infty_m(E_\Gamma\to M_\Gamma)$ for any $\Gamma \in A$ such that
\[
H_\lambda|_{\CM_\Gamma \times (0,1)} = (F_\Gamma \times id_{(0,1)})^* H^{\CB\Gamma}_\lambda, ~ \lambda \in \R_+^m.
\]

Apply Theorem~\ref{thm: hirsch} with the same $V$, and
$
M = M_\Gamma, ~ E = E_\Gamma, ~ F = H^{\CB\Gamma}.
$ Since $\mathbf{s}_1 \pitchfork 0,$ then also $\mathbf{s}^{\CB\Gamma}_1\pitchfork 0$. Thus, the same argument that showed $H^{\ev}\pitchfork 0$ shows $(H^{\CB\Gamma})^{\ev}\pitchfork 0$.  It follows that $X_\Gamma = \{\lambda \in \R^m \; | \; H^{\CB\Gamma}_\lambda \pitchfork 0\}$ is residual.

Since $X$ and $X_\Gamma$ are residual, $X \cap \bigcap_{\Gamma \in A} X_\Gamma\neq\emptyset,$ and for any $\lambda\in X \cap \bigcap_{\Gamma \in A} X_\Gamma$ and any $\Gamma\in A$ such that $\CB\Gamma$ has no partially stable components, the homotopy $H^{\CB\Gamma}_\lambda$ does not vanish at $\CM_\Gamma\times[0,1]$ due to transversality and the dimension count of Observation \ref{obs:dim_of_base}. The homotopy $H_\lambda$ does not vanish on $Y\times[0,1]$ by of the choice of $N.$  Thus, $H_\lambda$ satisfies our requirements for any $V'_+\subset V_+\cap Y_+$ of the form \eqref{eq:V_+}, and the proof is complete.
\end{proof}
\begin{proof}[Proof of Theorem \ref{thm:int_numbers_well_defined}]
By Lemma \ref{lem:existence}, one can find a global canonical multisection $\mathbf{s}\in C_m^\infty(\oPMr,E)$ that does not vanish in $\partial\oPMr$ or on $U_+$ of the form \eqref{eq:U_+}.  Hence, $e\left(E;\mathbf{s}|_{\partial\oPMr\cup U_+}\right)$ can be defined.
If $\mathbf{r}$ is another canonical multisection whose domain contains $\partial\oPMr\cup U'_+,$ for $U'_+$ of the same form, then by Lemma \ref{lem:partial_homotopy} applied to $C=\partial^0\oPMr$, the trivial bundle $E_1$, and $V_+ =U_+\cap U'_+$, one can find a nowhere-vanishing homotopy between $\mathbf{s}$ and $\mathbf{r}$.  Thus, by Lemma \ref{lem:zero diff as homotopy}, we have
\[\int_{\oCMr}e\left(E;\mathbf{s}|_{\partial\oPMr\cup U_+}\right) = \int_{\oCMr}e\left(E;\mathbf{r}|_{\partial\oPMr\cup U'_+}\right),\]
which completes the proof.
\end{proof}


\appendix

\section{Multisections and relative Euler classes}\label{sec:ap_euler}

Our definitions for orbifolds with corners, operations between them, and orientations are those of \cite[Section 3]{Zernik}. Analogous definitions for manifolds with corners appear in \cite{Joyce}.
In short, an orbifold is given by a proper \'etale groupoid $M=(M_0,M_1,s,t,e,i,m)$ (Definition 22 in \cite{Zernik}), which is a category with objects $M_0$ and morphisms $M_1$, where both $M_0$ and $M_1$ are manifolds with corners.\footnote{More precisely, the category of orbifolds is a $2$-localization of the category of proper \'etale groupoids by a natural notion of refinement.  A similar comment holds for orbibundles.}  The source and target maps $s,t:G_1\to G_0$ take a morphism to its source and target, respectively.  The map $e:M_0\to M_1$ takes an object $x$ to its identity morphism $1_x$, the map $i:M_1\to M_1$ takes an element to its inverse, and $m$ is the composition of morphisms, denoted by $m(g,h)=gh$ whenever defined.  The maps $s,t,e,i,m$ are required to be e\'tale, and the map $s\times t:M_1\to M_0\times M_0$ is proper. The \emph{coarse space} or \emph{orbit space} is the quotient $|M|=M_0/M_1,$ with the quotient topology. The isotropy group of $x\in M_0$ is the group of elements $\gamma\in M_1$ with $s(\gamma)=t(\gamma)=x$.  It is well known that 
an orbifold such that the generic isotropy group is trivial, is completely determined from the local orbifold structure of a cover of its coarse space; see \cite[Proposition 5.29]{MoMr}.  The same holds for orbifolds with corners.


An orbibundle $E$ over an orbifold with corners $M$ is given by a vector bundle $\pi:E\to M_0$ and a fiberwise linear map
\[\mu: E\times_{M_0}M_1:=\{(e,\gamma)|\pi(e)=t(\gamma)\}\rightarrow E,\]
where we write $e \cdot \gamma$ for $\mu(e,\gamma)$.  It is required that 
\[
\pi(e\cdot\gamma)=s(\gamma),\quad e\cdot 1_{\pi(e)}=e,\quad\text{and}\quad (e\cdot\gamma)\cdot\delta = e\cdot (\gamma\delta)\] whenever the last equality makes sense; see \cite[Section 5]{MO1}. This data naturally defines an orbifold whose objects are $E,$ and the morphisms and other maps are defined using $\pi$ and $\mu$.  The isotropy group of $x$ acts on the fiber $E_x.$

%

Our discussion of multisections, Euler classes, and their relations follows \cite{Dusa}. Although there the underlying category is manifolds without boundary, while for us it is the category of manifolds with corners, the constructions and proofs are easily adapted to our case. We also refer the reader to the Appendix of \cite{PST14} for a short description of the analogous definitions and for manifolds with corners.
Inspired by \cite[Definition 4.13]{Dusa}, 
we define:
\begin{definition}\label{def:multisection}
A \emph{multisection} of an orbibundle $E\to M$ over an orbifold with corners is a function
$\LLL: E\to \mathbb{Q}_{\geq 0}
$ that satisfies:\begin{enumerate}
\item for all $e=(x,v)\in E,$ and $\gamma\in X_1$ with $x=t(\gamma),$  $\LLL(x,v) = \LLL((x,v)\cdot \gamma)$;
\item for each $x\in M_0$ there is an open neighborhood $U$, a nonempty finite set of smooth local sections $s_i:U\to E,~i=1,\ldots, N$ called \emph{local branches}, and numbers $\mu_1,\ldots, \mu_N\in\mathbb{Q}_{>0}$ called \emph{weights}, such that $\sum_{v\in E_y}\LLL(y,v) = 1$ for all $y\in U$ and $\LLL(y,v) = \sum_{i\; |\; s_i(y)=v}\mu_i$~for~all~${v\in E_y}$.
\end{enumerate}
The triple $(U,\{s_i\},\mu_i)$ is called a \emph{local structure} for the multisection. We denote by $C_m^\infty(E)$ the collection of multisections of $E.$
\end{definition}

When $N=1$ for all $x$, the multisection is just a section in the category of orbibundles over orbifolds with corners.  Any section descends to a map from the coarse space of $M$ to that of $E$. We work with multisections because in the orbifold category, sections may be constrained to vanish at some points. Additional vanishing constraints come from canonicality, see \cite[Remark 3.5]{PST14}.

We write $Z(s)=\{p \; |\; \LLL(p,0)\neq 0\}$ for the zero locus of a multisection $s$.
A multisection $s$ is \emph{transverse to zero} if every local branch is transverse to the zero section; in this case, we write $s\pitchfork 0.$

Linear operations multisections are defined in \cite[Page 38]{Dusa} via \[(\LLL_1+\LLL_2)(x,v)=\sum_{v_1+v_2=v}\LLL_1(x,v_1)\LLL_2(x,v_2),~~\lambda\LLL(x,v)=\LLL(x,\lambda v).\]
These operations define a vector space structure on the set of multisections. When a finite group $G$ acts on $E\to M$, an induced action on multisections is defined via $(g\cdot\LLL)(x,v)=\LLL(g^{-1}(x,v)).$ The \emph{symmetrization} of $\LLL$ with respect to $G$ is defined in \cite[Definition A.10]{PST14} as the $G$-invariant multisection
\begin{equation}\label{eq:average}
\LLL^G(x,v) = \frac{1}{|G|}\sum_{g\in G}g\cdot\LLL(x,v).
\end{equation}

\begin{nn}\label{def:union_of_sections}
If $\LLL_1,\ldots, \LLL_m$ are multisections of an orbifold vector bundle $E$ on an orbifold $M$, we write 
\[
\uplus_{i=1}^m \LLL_i(x,v) := \frac{1}{m}\sum\nolimits_{i=1}^m\LLL_i(x,v).
\]
\end{nn}

The \emph{support} of a multisection is the support of $\LLL$, which is locally the union of the images of the local branches $s_i$.  We often denote a multisection $\LLL$ by its support $s$.  A multisection descends to a function on the coarse space of $E$, considering $E$ as an orbifold with corners, so we sometimes use the term ``support" to refer, by slight abuse of terminology, to the support on the coarse bundle.
If $s_i$ is the support of $\LLL_i$, then $\uplus_{i=1}^m s_i(x)$ denotes $\uplus_{i=1}^m \LLL_i(x,v)$.  In particular, the support of $\uplus_{i=1}^m s_i(x),$ as a set, is the union of the supports of the $s_i$.  Under this convention, we have, for any multisection $s$,
\begin{equation}\label{eq:union_of_sections}
\uplus_{i=1}^m s=s.
\end{equation}

\begin{rmk}\label{rmk:consistency_etc}
The boundary $\partial X$ of a manifold with corners $X$ is itself a manifold with corners, equipped with a 
map $i_X:\partial X\to X$.  We extend this map to $i_X:X\sqcup\partial X\to X$ by the identity.  If $U \subseteq X$ and $V \subseteq \d X$, then we say that a section $s$ of a bundle over $U\sqcup V$ is \emph{consistent} if $s(x)=s(y)$ whenever $i_X(x)=i_X(y),$ where the equality is obtained using the natural identifications of the fibers $E_x$ and $E_y$.  Consistent multisections are similarly defined.\footnote{More precisely, part of the information of a vector bundle $E\to X$ in the category of vector bundles over manifolds with corners is a vector bundle $\partial E\to\partial X$ together with an identification $\widetilde{i}_X:(\partial E)_x\to i_X^*E_x$.  The map $\widetilde{i}_X$ extends by the identity map to $\widetilde{i}_X:E\sqcup\partial E\to i_X^*E.$ We use these natural identifications to define the equality of $s(x)$ and $s(y)$.}

Throughout this paper, we omit the maps $i_X$ from the notation and identify $x$ with $i_X(x)$.  By a section or a multisection, we always mean a consistent section or multisection. In particular, canonical multisections are automatically consistent, as well as all other multisections constructed in this paper.
\end{rmk}
Let $E\to M$ be an orbibundle over an orbifold with corners such that $\rk E=\dim M$.  Assume that $E$ and $M$ are oriented, and, while $|M|$ need not be compact, assume that it has finitely many connected components.

Suppose $U\subseteq |M|$ is an open subspace such that $|M|\setminus U$ is a compact orbifold with corners.
Let $s$ be (the support of) a multisection of $|E|\to U\sqcup \partial |M|$, which is, by definition, induced from a multisection of $E$ on the preimage of $U\sqcup \partial |M|$ in $M_0$. Suppose that $s$ vanishes nowhere on $U\sqcup \partial |M|.$ 
Let $\tilde s$ be an extension of $s$ to a multisection of $E\to M$ with isolated zeroes.
For any $|p|\in |M|$ in the zero locus of $s,$ let $p\in M_0$ be a preimage of $|p|,$ and let $(U,\{s_i\}_{i=1}^N,\{\mu_i\}_{i=1}^N)$ be a local structure for $s$ at $p.$ By shrinking $U$ if necessary, assume that the only zero of any local branch $s_i$ in $U$ may have is at $p$, and that $E|_U$ is trivialized as $U\times \R^n$; then, give the fiber $\R^n$ the orientation induced from $E$.

Let $\deg_{p}(s_i)$ be the degree of vanishing of $s_i$ at $p$: It is zero if $s_i(p)\neq 0$; otherwise let $S$ be the boundary of a small ball $B\subset U$ containing $p$, with the orientation induced from the orientation of $B\subset M$.  Let $S'$ be the $L^2-$unit sphere in $\R^n$ with the orientation induced from the unit ball of $\R^n$.  We define $\deg_p(s_i)$ as the degree of the map $f:S\to S'$ given by $x\to \frac{s_i(x)}{|s_i(x)|_{L^2}}.$
\begin{nn}\label{nn:weighted_signed}
Under the above assumptions, we define the \emph{weight} of $|p|$~as
\[
\eps_{|p| }=\frac{1}{|G|}\sum\nolimits_{i=1}^N \mu_i\deg_p(s_i), 
\] where $G$ is the isotropy group of $p.$ 
We define, for a global multisection $\tilde s$, the \emph{weighted cardinality} of $Z(\tilde s)$ as
\[\#Z(\tilde s) = \sum\nolimits_{|p|\in |M|}\eps_{|p|}.\]
\end{nn}
One can show that $\text{deg}_{p}(s_i)$ and $\eps_{|p|}$ depend only on the orientations of $E,M.$ In fact, it suffices to choose a relative orientation for $E\to M,$ rather than orienting both $E$ and $M.$
\begin{thm}\label{prop:euler_as_zero_locus}
Let $M$, $E$, and $s$ be as above, and let $\tilde s$ be a global extension with isolated zeroes. Then $\#Z(\tilde s)$ and the homology class $\sum \eps_{|p|}[|p|]\in H_0(|M|;\mathbb{Q})$ depend only on $s$ and not on $\tilde s.$ Changing the orientations of $E$ and $M$ but preserving the relative orientation does not change the class or the count. This homology class is therefore Poincar\'e dual to a relative cohomology class with compact support $H_c^n(|M|,\partial |M|;\mathbb{Q})$, which is called the \emph{relative Euler class}, and by definition satisfies,
\[\int_{|M|}e(E,s)=\#Z(\tilde s).\]
When $\partial|M|=\emptyset$, the relative Euler class is the Euler class of the orbibundle.
\end{thm}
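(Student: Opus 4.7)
The plan is to reduce everything to a cobordism argument on the coarse space, since the weighted zero count and its homology class are local invariants that add across charts. First I would perturb: given two global extensions $\tilde s_0, \tilde s_1$ of $s$ with isolated zeroes, I would form the affine homotopy $H(p,t)=(1-t)\tilde s_0(p)+t\tilde s_1(p)$ on $M\times[0,1]$, and then perturb $H$ rel $(U\sqcup\partial|M|)\times[0,1]$ to a transverse multisection; this is possible because $s$ vanishes nowhere on $U\sqcup\partial|M|$, so a small enough perturbation preserves non-vanishing there, and standard transversality for multisections (as in \cite{Dusa}) yields a transverse representative whose zero set is a weighted oriented $1$-dimensional suborbifold with boundary. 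The compactness of $|M|\setminus U$ and of $\partial[0,1]$ guarantees that $Z(H)$ is compactly supported. Its oriented boundary computes, with the sign/weight conventions of Notation~\ref{nn:weighted_signed}, $\sum_p\eps_p[p]\big|_{t=1}-\sum_p\eps_p[p]\big|_{t=0}$, so the weighted count is equal and the two $0$-cycles are homologous in $|M|$.

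Next I would handle orientation behavior. The local degree $\deg_p(s_i)$ is defined via a map $S\to S'$ where both spheres inherit orientations from $M$ and from $E$ respectively. Reversing the orientation of $M$ reverses the orientation of $S$ and reversing the orientation of $E$ reverses the orientation of $S'$, so doing both simultaneously leaves the degree of the sphere map unchanged; since the relative orientation is exactly the datum needed to define the degree, this proves the orientation-invariance claim. The Poincar\'e duality interpretation then follows formally: the weighted zero cycle is supported in the compact set $|M|\setminus U$, so it represents a class in the Borel--Moore homology $H_0^{BM}(|M|;\mathbb{Q})$, which for an oriented orbifold with corners of dimension $n$ is Poincar\'e--Lefschetz dual to $H^n_c(|M|,\partial|M|;\mathbb{Q})$; the defining identity $\int_{|M|}e(E,s)=\#Z(\tilde s)$ is then the pairing of this class with the fundamental class. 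In the closed case $\partial|M|=\emptyset$, so one may take $U=\emptyset$ and any transverse global multisection as $\tilde s$, and the weighted zero count reproduces the classical Euler class by the usual argument (reducing to the manifold case via local uniformizing charts and averaging over isotropy, which is where the $1/|G|$ in the weight formula comes from).

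The main obstacle I expect is the multisection bookkeeping in the cobordism step: one must ensure that the affine combination is literally a multisection in the sense of Definition~\ref{def:multisection}, that transverse perturbations exist in the class of consistent multisections on an orbifold with corners (Remark~\ref{rmk:consistency_etc}), and that the weighted boundary formula for a $1$-dimensional oriented multi-suborbifold really does reproduce $\sum\eps_p[p]$ with the weights $\mu_i/|G|$. I would handle this by working chart-by-chart on the coarse space: choose a refinement of $M$ in which both $\tilde s_0,\tilde s_1$ and the perturbed $H$ admit simultaneous local branches, invoke the standard manifold-with-corners cobordism argument on each chart for each branch, and sum with multiplicities $\mu_i$, dividing by $|G|$ when passing to the orbit space. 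The argument is then essentially the one in \cite[Appendix]{PST14} adapted verbatim, with the only new point being compact support in the non-compact direction $U$, which is taken care of by the hypothesis that $s$ is nonvanishing there and $|M|\setminus U$ is compact.
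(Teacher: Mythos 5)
Your proposal is correct and follows exactly the route the paper indicates: the paper explicitly defers to ``a standard cobordism argument, which is easily modified to the orbifold case'' for independence of the extension, and to \cite{Dusa} for the identification with the Euler class in the closed case. Your write-up simply supplies the details that the paper omits — the affine homotopy, perturbation rel $(U\sqcup\partial|M|)\times[0,1]$, the weighted boundary count of the $1$-dimensional zero cobordism, the sign analysis for the relative orientation, and the Poincar\'e--Lefschetz duality — and these are exactly the expected steps, so the two proofs coincide in substance.
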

The independence of choices follows from a standard cobordism argument, which is easily modified to the orbifold case. The fact that the weighted zero count of a transverse multisection is dual to the Euler class in the case of no boundary is a special case of the results of \cite{Dusa}. It extends without difficulty to our setting. A final useful observation is the following:
\begin{obs}\label{obs:smaller_U}
Let $E,M,U$, and $s$ be as above.  Let $U'\subseteq U$ be an open set such that $|M|\setminus U'$ is a compact orbifold with corners, and $s' = s|_{\partial |M|\cup U'}$.  Then a transverse extension of $s$ is also a transverse extension of $s'.$ Thus,
\[\int_{|M|} e(E ; s') =\int_{|M|} e(E ; s) \in \mathbb{Q}
.\]
\end{obs}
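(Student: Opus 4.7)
The proof here is essentially an unpacking of definitions, so the plan is short. The key point is that the notion of ``transverse extension" used to define $\int_{|M|} e(E; s)$ depends only on the global multisection $\tilde{s}$ obtained by extending $s$ across $|M| \setminus U$, together with the requirements that $\tilde{s}$ agrees with $s$ on $\partial|M| \cup U$ and has isolated zeroes. Transversality and the isolated zero condition are intrinsic properties of $\tilde{s}$ and do not refer to the particular choice of open set on which the boundary data was prescribed.

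Given this, the plan is as follows. First I would fix a transverse extension $\tilde{s}$ of $s$; its existence follows by combining an arbitrary smooth extension with a small transverse perturbation supported in the compact set $|M| \setminus U$, by a standard application of Theorem~\ref{thm: hirsch} (or the usual Thom transversality argument in the orbifold-with-corners category). Next, I would observe that since $U' \subseteq U$, the restriction $\tilde{s}|_{\partial|M| \cup U'}$ equals $s|_{\partial|M| \cup U'} = s'$. Therefore $\tilde{s}$ is also a global multisection extending $s'$, and its transversality and isolated-zero property are unchanged. In particular, $\tilde{s}$ is a transverse extension of $s'$, which proves the first assertion.

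For the integral identity, I would invoke Theorem~\ref{prop:euler_as_zero_locus}, which asserts that the weighted zero count $\#Z(\tilde{s})$ depends only on the boundary-and-$U$ data, not on the specific transverse extension; concretely, we have
\[
\int_{|M|} e(E ; s) = \#Z(\tilde{s}) = \int_{|M|} e(E ; s'),
\]
where the same $\tilde{s}$ computes both sides. This yields the equality of the Euler integrals in $\mathbb{Q}$.

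The main (minor) obstacle is simply to confirm that the ingredients of the definition of the weighted zero count, namely the local branches, weights, and isotropy groups, produce the same rational number regardless of whether one records the prescribed data on $\partial|M| \cup U$ or on the smaller set $\partial|M| \cup U'$; but since the same extension $\tilde{s}$ serves both roles and the zeros of $\tilde{s}$ are intrinsic to $\tilde{s}$, there is nothing further to check.
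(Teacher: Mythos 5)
Your proof is correct and essentially the argument the paper intends; the paper states this as an observation without proof precisely because the content is the unpacking you give (a transverse extension of $s$ restricts to $s'$ on the smaller set and hence is a transverse extension of $s'$, and the same weighted zero count computes both integrals by Theorem~\ref{prop:euler_as_zero_locus}). No gaps.
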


\bibliographystyle{abbrv}
\bibliography{OpenBiblio}

\end{document}